          \theoremstyle{definition}
          \newtheorem{theorem}{Theorem}[section]
          \newtheorem{prop}[theorem]{Proposition}
          \newtheorem{lemma}[theorem]{Lemma}
          \newtheorem{cor}[theorem]{Corollary}
          \newtheorem{defn}[theorem]{Definition}
          \newtheorem{notation}[theorem]{Notation}
          \newtheorem{example}[theorem]{Example}
          \newtheorem{rmk}[theorem]{Remark}
\numberwithin{equation}{section}
          \newcommand{\nc}{\newcommand}
          \nc{\DMO}{\DeclareMathOperator}	
          \nc{\commentout}[1]{}
          \nc{\newnotation}{\nomenclature}
          \nc{\wrap}{\cW}
          \nc{\Tw}{\mathsf{Tw}}
          \nc{\loc}{\mathsf{Loc}}
          \nc{\Top}{Top}
          \nc{\emb}{\mathsf{emb}}
          \nc{\ind}{\mathsf{Ind}}
          \nc{\Ind}{\mathsf{Ind}}
          \nc{\Loc}{\mathsf{Loc}}
          \nc{\Cob}{\mathsf{Cob}}
          \nc{\mul}{\mathsf{Mul}}
          \nc{\fat}{\mathsf{fat}}
          \nc{\cob}{\mathsf{Cob}}
          \nc{\coh}{\mathsf{Coh}}
          \nc{\Liouaut}{\Aut_{\mathsf{Liou}}}
          \nc{\idem}{\mathsf{Idem}}
          \nc{\sets}{\mathsf{Sets}}
          \nc{\near}{\mathsf{near}}
          \nc{\sing}{\mathsf{Sing}}
          \nc{\Sing}{\mathsf{Sing}}
          \nc{\perf}{\mathsf{Perf}}
          \nc{\block}{\mathsf{block}}
          \nc{\ssets}{\mathsf{sSets}}
          \nc{\cmpct}{\mathsf{cmpct}}
          \nc{\compact}{\mathsf{cmpct}}
          \nc{\pwrap}{\mathsf{PWrap}}
          \nc{\coder}{\mathsf{Coder}}
          \nc{\bimod}{\mathsf{Bimod}}
          \nc{\grmod}{\mathsf{GrMod}}
          \nc{\Morita}{\mathsf{Morita}}
          \nc{\morita}{\mathsf{Morita}}
          \nc{\spaces}{\mathsf{Spaces}}
          \nc{\pwrms}{\mathsf{PWrFuk}_{M,S}}
          \nc{\pwrmf}{\mathsf{PWrFuk}_{M,F}}
          \nc{\pwrapmf}{\mathsf{PWrFuk}_{M,F}}
          \nc{\fuk}{\mathsf{Fukaya}}
          \nc{\infwr}{\mathsf{InfWr}}
          \nc{\fukaya}{\mathsf{Fukaya}}
          \nc{\autml}{\mathsf{Aut}_{M,\Lambda}}
          \nc{\fukml}{\mathsf{Fukaya}_{M,\Lambda}}
          \nc{\fukmle}{\mathsf{Fukaya}_{M,\Lambda,\epsilon}}
          \nc{\fukmod}{\wrfukcompact(M)\modules}
          \nc{\lag}{\mathsf{Lag}}
          \nc{\lagm}{\lag_M}
          \nc{\lago}{\lag^o}
          \nc{\lagml}{\lag_{M,\Lambda}} 
          \nc{\lagmle}{\lag_{M,\Lambda,\epsilon}}
          \nc{\Fun}{\mathsf{Fun}}
          \nc{\fun}{\mathsf{Fun}}
          \nc{\vect}{\mathsf{Vect}}
          \nc{\chain}{\mathsf{Chain}}
          \nc{\chainn}{Chain}
          \nc{\wrfuk}{\mathsf{WrFukaya}}
          \nc{\wrfukcompact}{\mathsf{WrFukaya}_{\mathsf{cmpct}}}
          \nc{\pwrfuk}{\mathsf{PWrFukaya}}
          \nc{\inffuk}{\mathsf{InfFuk}}
          \nc{\pwrfukml}{\mathsf{PWrFukaya}_{M,\Lambda}}
          \nc{\inffukml}{\mathsf{InfFuk}_{M,\Lambda}}
          \nc{\nattrans}{\mathsf{NatTrans}}
          \nc{\corres}{\mathsf{Corres}}
          \nc{\fukep}{\fukaya_\Lambda(M,\epsilon)}
          \nc{\fukepop}{\fukaya_\Lambda(M,\epsilon)^{\op}}
          \nc{\lagep}{\lag_\Lambda(M,\epsilon)}
          \DMO{\cyl}{cyl} 
          \nc{\dbcoh}{D^b\mathsf{Coh}}
          \nc{\corr}{\mathsf{Corr}}
          \nc{\Liouauto}{{\Aut^o}}
          \nc{\Liouautb}{\Aut^{b}}
          \nc{\Liouautgr}{{\Aut^{gr}}}
          \nc{\Liouautgrb}{\Aut^{gr,b}}
          \nc{\Fuk}{\mathsf{Fuk}}
          \DMO{\im}{im}
          \DMO{\ev}{ev}
          \DMO{\stable}{Ex}
          \DMO{\inj}{inj}
          \DMO{\fib}{fib}
          \DMO{\conf}{Conf}
          \DMO{\chains}{Chains}
          \DMO{\cochains}{Cochains}
          \DMO{\cone}{Cone}
          \DMO{\Map}{Map}
          \DMO{\ran}{Ran}
          \DMO{\rot}{Rot}
          \DMO{\leg}{Leg}
          \DMO{\imm}{imm}
          \DMO{\adj}{adj}
          \DMO{\symp}{Symp}
          \DMO{\tree}{Tree}
          \DMO{\cube}{Cube}
          \DMO{\deep}{deep}
          \DMO{\back}{back}
          \DMO{\Hoch}{Hoch}
          \DMO{\front}{front}
          \DMO{\flow}{Flow}
          \DMO{\floer}{Floer}
          \DMO{\Maps}{Maps}
          \DMO{\exact}{exact}
          \DMO{\excess}{Excess}
          \DMO{\Decomp}{Decomp}
          \DMO{\decomp}{Decomp}
          \DMO{\collar}{collar}
          \DMO{\yoneda}{Yoneda}
          \DMO{\hamspace}{Ham}
          \DMO{\sympspace}{Symp}
          \DMO{\holomaps}{Holomaps}
          \DMO{\comp}{Comp}
          \DMO{\crit}{Crit}
          \DMO{\test}{{test}}
          \DMO{\sign}{sign}
          \DMO{\topp}{top}
          \DMO{\indx}{Index}
          \DMO{\Break}{Break} 
          \DMO{\zero}{zero} 
          \DMO{\ob}{Ob}
          \DMO{\gr}{Gr} 
          \DMO{\Gr}{Gr} 
          \DMO{\cl}{Cl} 
          \DMO{\grlag}{GrLag}
          \DMO{\Pin}{Pin}
          \DMO{\Graph}{Graph}
          \DMO{\pin}{Pin}
          \DMO{\gap}{Gap}
          \DMO{\Ex}{Ex}
          \DMO{\id}{id}
          \DMO{\End}{End}
          \DMO{\sym}{Sym}
          \DMO{\aut}{Aut}
          \DMO{\Aut}{Aut}
          \DMO{\haut}{hAut}
          \DMO{\hAut}{hAut}
          \DMO{\DK}{DK} 
          \DMO{\poly}{poly} 
          \DMO{\diff}{Diff}
          \DMO{\coll}{coll}
          \DMO{\dist}{dist} 
          \DMO{\coker}{coker} 
          \nc{\kernel}{\ker} 
          \DMO{\sspan}{span}
          \DMO{\hocolim}{hocolim}	
          \DMO{\holim}{holim}
          \DMO{\sk}{sk}
          \DMO{\Symp}{Symp}
          \DMO{\ho}{ho}
          \DMO{\fin}{fin}
          \DMO{\tor}{Tor}
          \DMO{\ext}{Ext}
          \DMO{\ret}{Ret}
          \DMO{\ham}{Ham}
          \DMO{\con}{con}
          \DMO{\leaf}{leaf}
          \DMO{\supp}{supp}
          \DMO{\edge}{edge}
          \DMO{\colim}{colim}
          \DMO{\edges}{edges}
          \DMO{\Image}{image}
          \DMO{\roots}{roots}
          \DMO{\height}{height}
          \DMO{\finmod}{FinMod}
          \DMO{\leaves}{leaves}
          \DMO{\planar}{planar}
          \DMO{\vertices}{vertices}
\nc{\norm}[2]{{ \ensuremath{\|} #1 \ensuremath{\|}}_{#2}}
\nc{\Dbar}[1]{\ensuremath{{\bar{\partial}}_{#1}}}
\nc{\Ce}{\ensuremath{\mathbb{C}}}
\nc{\B}{\ensuremath{\mathbb{B}}}
\nc{\osc}{\operatorname{osc}}
\nc{\leng}{\operatorname{leng}}
          \nc{\lagg}{\lag^{\cG}}
          \nc{\iso}{\mathsf{Iso}}
          \nc{\Set}{\mathsf{Set}}
          \nc{\Ass}{\mathsf{ \bf Ass}}
          \nc{\Mod}{\mathsf{Mod}}
          \nc{\modules}{\mathsf{Mod}}
          \nc{\sset}{\mathsf{sSet}}
          \nc{\liou}{\mathsf{Liou}}
          \nc{\poset}{\mathsf{Poset}}
          \nc{\trno}{T^*\RR^n_{\geq 0}}
          \nc{\spectra}{\mathsf{Spectra}}
          \nc{\tensorfin}{\tensor^{\fin}}
          \nc{\lagptg}{\lag_{pt,pt}^{\cG}}
          \nc{\Fin}{\mathcal{F}\mathsf{in}}
          \nc{\lagnl}{\lag_{N,\Lambda}}
          \nc{\lagmlg}{\lag_{M,\Lambda}^{\cG}}
          \nc{\lagsplit}{\lag^{\mathsf{split}}}
          \nc{\lagktimes}{(\lag^{\dd k})^\times}
          \nc{\lagplanar}{\lag^{\times,\planar}}
          \nc{\Cont}{\text{\rm Cont}}
          \nc{\Ham}{\text{\rm Ham}}
          \nc{\Dev}{\text{\rm Dev}}
          \nc{\Lin}{\text{\rm Lin}}
          \nc{\Int}{\text{\rm Int}}
          \nc{\Hom}{\text{\rm Hom}}
          \nc{\Chord}{\text{\rm Chord}}
          \nc{\nbhd}{\mathcal{N}\text{\rm{bhd}}}
          \nc{\onef}{1_{\fukaya}}
          \nc{\smsh}{\wedge}
          \nc{\un}{\underline}
          \nc{\xto}{\xrightarrow}
          \nc{\xra}{\xto}
          \nc{\tensor}{\otimes}
          \nc{\del}{\partial}
          \nc{\dd}{\diamond}
          \nc{\tri}{\triangle}
          \nc{\bb}{\Box}
          \nc{\into}{\hookrightarrow}
          \nc{\onto}{\twoheadrightarrow}
          \nc{\contains}{\supset}
          \nc{\transverse}{\pitchfork}
          \nc{\uncirc}{\underline{\circ}}
          \nc{\thetacontact}{\theta} 
          \nc{\Jbar}{\overline{J}}
          \nc{\Fbar}{\overline{F}}
          \nc{\delbar}{\overline{\del}}
          \nc{\thetabar}{\overline{\theta}}
          \nc{\omegabar}{\overline{\omega}}
          \nc{\Liou}{\text{\rm Liou}}
          \nc{\Yhat}{\widehat{Y}}
          \nc{\Mliou}{M}
          \nc{\vece}{ {\vec \epsilon}}	
          \nc{\vecd}{ {\vec \delta}}
          \nc{\ov}{\overline}
          \DMO{\op}{op}
          \nc{\opp}{ ^{\op}}
          \nc{\hiro}{\textcolor{blue}}
          \nc{\YG}{\textcolor{orange}}
		  \nc{\eqn}{\begin{equation}}
          \nc{\eqnn}{\begin{equation}\nonumber}
          \nc{\eqnd}{\end{equation}}
          \nc{\enum}{\begin{enumerate}}
          \nc{\enumd}{\end{enumerate}}
          \nc{\beastar}{\begin{eqnarray*}}
          \nc{\eeastar}{\end{eqnarray*}}
\numberwithin{equation}{section}
\def\R{{\mathbb R}}
\def\osc{{\hbox{\rm osc }}}
\def\Crit{{\hbox{Crit}}}
\def\E{{\mathbb E}}
\def\Z{{\mathbb Z}}
\def\C{{\mathbb C}}
\def\R{{\mathbb R}}
\def\N{{\mathbb N}}
\def\11{{\mathbb I}}
\def\Jbar{{\widetilde J}}
\def\delbar{{\overline \partial}}
          \def\cC{\mathcal C}\def\cD{\mathcal D}
          \def\cG{\mathcal G}
          \def\cI{\mathcal I}\def\cJ{\mathcal J}
          \def\cS{\mathcal S}
          \def\cW{\mathcal W}\def\c\Mliou{\mathcal \Mliou}
          \def\CC{\mathbb C}
          \def\RR{\mathbb R}
          \def\\Mliou\Mliou{\mathbb \Mliou}
          \def\ZZ{\mathbb Z}
          \def\s\Mliou{\mathsf \Mliou}
          \def\b\Mliou{\mathbf \Mliou}
          \def\f\Mliou{\mathfrak \Mliou}
\def\Jbar{{\widetilde J}}
\def\delbar{{\overline \partial}}
\def\b{\beta}
\def\c{\chi}
\def\f{\phi}
\def\s{\sigma}
\def\CC{{\mathcal C}}
\def\CE{{\mathcal E}}
\def\CI{{\mathcal I}}
\def\CJ{{\mathcal J}}
\def\CL{{\mathcal L}}
\def\CM{{\mathcal M}}
\def\CS{{\mathcal S}}
\def\CT{{\mathcal T}}
\def\grad#1{\,\nabla\!_{{#1}}\,}
\def\darr#1{\raise1.5ex\hbox{$\leftrightarrow$}
\mkern-16.5mu #1}
\def\roughly#1{\raise.3ex\hbox{$#1$\kern-.75em
\lower1ex\hbox{$\sim$}}}
\def\opname#1{\mathop{\kern0pt{\rm #1}}\nolimits}
\def\End{\opname{End}}
\def\dim{\opname{dim}}
\def\dist{\opname{dist}}
\def\grad{\opname{grad}}
\def\supp{\operatorname{supp}}
\def\Dev{\operatorname{Dev}}
\def\leng{\operatorname{leng}}
\def\End{\operatorname{End}}
\def\Aut{\operatorname{Aut}}
\def\coker{\operatorname{Coker}}
\def\Cont{\operatorname{Cont}}
\def\Crit{\operatorname{Crit}}
\def\Sing{\operatorname{Sing}}
\def\Image{\operatorname{Image}}
\def\ev{\operatorname{ev}}
\def\Int{\operatorname{Int}}
\def\ben{\begin{enumerate}}
\def\een{\end{enumerate}}
\def\be{\begin{equation}}
\def\ee{\end{equation}}
\def\bea{\begin{eqnarray}}
\def\eea{\end{eqnarray}}
\def\beastar{\begin{eqnarray*}}
\def\eeastar{\end{eqnarray*}}
\def\bc{\begin{center}}
\def\ec{\end{center}}
\renewcommand{\b}{\beta}
\def\Hoch{{\tt Hoch}}
\def\Cont{\operatorname{Cont}}
\def\Crit{\operatorname{Crit}}
\def\Sing{\operatorname{Sing}}
\def\Ham{\operatorname{Ham}}
\def\Graph{\operatorname{Graph}}
\def\id{\text\rm{id}}
\def\E{\ifmmode{\mathbb E}\else{$\mathbb E$}\fi} 
\def\N{\ifmmode{\mathbb N}\else{$\mathbb N$}\fi} 
\def\R{\ifmmode{\mathbb R}\else{$\mathbb R$}\fi} 
\def\Q{\ifmmode{\mathbb Q}\else{$\mathbb Q$}\fi} 
\def\C{\ifmmode{\mathbb C}\else{$\mathbb C$}\fi} 
\def\H{\ifmmode{\mathbb H}\else{$\mathbb H$}\fi} 
\def\Z{\ifmmode{\mathbb Z}\else{$\mathbb Z$}\fi} 
\def\Hoch{{\tt Hoch}}
\def\Cont{\operatorname{Cont}}
\def\Crit{\operatorname{Crit}}
\def\Sing{\operatorname{Sing}}
\def\Ham{\operatorname{Ham}}
\def\Graph{\operatorname{Graph}}
\def\grad#1{\,\nabla\!_{{#1}}\,}
\def\darr#1{\raise1.5ex\hbox{$\leftrightarrow$}
\mkern-16.5mu #1}
\def\roughly#1{\raise.3ex\hbox{$#1$\kern-.75em
\lower1ex\hbox{$\sim$}}}
\def\opname#1{\mathop{\kern0pt{\rm #1}}\nolimits}
\def\End{\opname{End}}
\def\dim{\opname{dim}}
\def\dist{\opname{dist}}
\def\grad{\opname{grad}}
\def\supp{\operatorname{supp}}
\def\Dev{\operatorname{Dev}}
\def\leng{\operatorname{leng}}
\def\End{\operatorname{End}}
\def\Aut{\operatorname{Aut}}
\def\coker{\operatorname{Coker}}
\def\Cont{\operatorname{Cont}}
\def\Crit{\operatorname{Crit}}
\def\Sing{\operatorname{Sing}}
\def\Image{\operatorname{Image}}
\def\ev{\operatorname{ev}}
\def\Int{\operatorname{Int}}
\begin{document}

\title{Fukaya Category of Infinite-type Surfaces}
\author{Jaeyoung Choi and Yong-Geun Oh}
\date{}

\begin{abstract} In this paper,
we construct a Fukaya category of any infinite type surface whose objects are \emph{gradient sectorial Lagrangians}. This class of
Lagrangian submanifolds is introduced by one of the authors in \cite{oh:intrinsic} which can serve
as an object of a Fukaya category of any Liouville manifold that admits an exhausting proper Morse function, in particular on the Riemann surface of infinite type. We describe a generating set
of the Fukaya category in terms of the end structure of the surface when the surface has countably many limit points in its ideal boundary, the latter of
which can be described in terms of a subset of the Cantor set.
We also show that our Fukaya category is not quasi-equivalent to
the limit of the Fukaya category of surfaces of finite type appearing
in the literature.
\end{abstract}

\keywords{Infinite-type surfaces, ideal boundary, hyperbolic structure,
quasi-isometry, standard surfaces,
tame pluri-subharmonic functions, gradient sectorial
Lagrangians, Fukaya category}

\thanks{This work is supported by  the IBS project \# IBS-R003-D1.}
\date{February, 2023}

\maketitle

\tableofcontents

\section{Introduction}

Let $M$ be a symplectic surface, i.e., a surface equipped with an area form $\omega$.
If $M$ is noncompact, $M$ is always exact and becomes
a Liouville manifold: If we let $\omega = d\alpha$, we can find a vector field $X$ satisfying
\be\label{eq:liouville-X}
X \rfloor d\alpha = \alpha \quad \text{\rm or equivalently } \, \CL_X\alpha = \alpha.
\ee
If $M$ is of finite type, its Fukaya category is well-understood by now.
(See \cite{heather} for example.)
However, not much has been said about the Fukaya category of infinite type surfaces, except that
Auroux and Smith \cite{auroux-smith} take the definition of a Fukaya category of infinite type surface
to be a colimit of that of finite type surfaces utilizing the operations of
pair of pants decomposition and gluing constructions. Since then, this definition
has prevailed in the literature. (See \cite{pascaleff-sibilla} for example.)

Recently infinite type Riemann surfaces have attracted much interest of researchers
in relation to the study of \emph{big mapping class groups}. Especially in the blogpost \cite{calegari:blogpost},
D. Calarari proposed the study of the mapping class group $\text{\rm Map}(\R^2 \setminus C)$, where
$C$ denotes a Cantor set and posed the question of whether this group has an infinite-dimensional
space of quasimorphisms, as is the case with the mapping class group of a surface of finite
topological type. See \cite{bavard:collar}, \cite{bavard:big-mapping} for some
relevant developments related to the study of big mapping class groups.

In this paper, partially motivated by the study of big mapping
class groups and Calagari's proposal of studying quasimorphisms on the infinite
type surfaces, we will provide a geometric construction of a Fukaya category of infinite type surface
by a direct construction without taking the colimit. We hope that this geometric construction
combined with the study of Lagrangian spectral invariants can be utilized in
some new dynamical approach to the Calagari's question on the space of quasimorphisms
on the big mapping class groups. In relation to this, we refer readers to
\cite{azam-blanchet} for the mapping class group action on the Fukaya category  of surfaces (of finite type).

Another source of possible applications comes from the homological mirror symmetry.
There is an interesting construction of modular forms via the study of
Fukaya category of the \emph{divisor complements} and the mirror symmetry of elliptic curves by
Ueda and his collaborators. During their construction, study of the Fukaya category of
surfaces of infinite type naturally arise as some universal covering of punctured
elliptic curves on which the mapping class group of the latter surface acts.
(See \cite{nagano-ueda}, \cite{hashimoto-ueda} for example.)

\subsection{Hyperbolic Riemann surface structure}

One of difficulties in an attempt to directly construct a Fukaya category
of an infinite type surface lies in the question on how one should handle
the end structure of the surface.
If a surface is of finite type, it automatically has finitely many ends and all ends  are eventually cylindrical, i.e.,
each end is diffeomorphic to $[0, \infty) \times Q$ for some compact manifold.
However, this is not necessarily the case when the surface is of infinite type.
Unlike the case of finite type for which we can \emph{easily prescribe}
the Liouville vector field $X$ to be cylindrical, i.e., to satisfy
\eqref{eq:liouville-X}, there are several things to be made clear before
attempting to construct a Fukaya category on the infinite type surfaces
(or more generally infinite type non-compact symplectic manifolds). Here are
three points of immediate concern:
\begin{enumerate}
\item Since construction of Fukaya category involves study of pseudoholomorphic curves, one needs to make ensure the bulk
admits reasonable geometric analysis of pseudoholomorphic curves.
This means that one should ask some boundedness of relevant Riemannian
metric.
\item The above means that the resulting Fukaya category will depend not
only on the symplectic structure $\omega$ but also on the quasi-conformal class of the  K\"ahler metric associated to $g = \omega(\cdot, J \cdot)$.
Because of this \emph{the set of tame almost complex structures
may not be contractible in $C^\infty$-topology \cite{liu-papado}.
The associated tame metrics may not be quasi-conformally equivalent
which would imply that the set of almost complex structures tame
to symplectic surface $(M,\omega)$ in the standard sense
may not even be path-connected.} This is the reason why
 the invariant arising from the study of pseudoholomorphic
curves on an infinite type surface
 is an invariant of $(M,\CT, \omega)$ instead of $(M,\omega)$.
\item It is not obvious what kind of asymptotic condition on the background symplectic  manifold and the associated Lagrangian submanifolds to put besides its tameness.
\end{enumerate}
 In particular, the point (3)   makes identifying the relevant condition
 for a noncompact  Lagrangian brane as a legitimate object of
 the relevant Fukaya category a nontrivial question.

To resolve these points, we first mention that it is well-known
to the experts that  every noncompact surface admits a
\emph{tame hyperbolic structure}. which induces a Riemann surface
structure $(M,J_0)$ as well as the topology of the surface.
Equipping a $J$-tame almost symplectic form of infinite volume, we
obtain  a K\"ahler structure $(M,\omega, J_0)$.
We will fix  the reference hyperbolic structure on $M$
in the sense of \cite{liu-papado},
which will be used the underlying topology and quasi-conformal structure of
the surface which will be used later in our discussion.

\begin{defn}[Hyperbolic Riemann surface]\label{defn:hyperbolic-structure-intro}
 A hyperbolic Riemann surface
is a triple $(M,J_0, g_0)$ whose universal cover is isometric to the unit disk.
We call it \emph{tame} if it has bounded curvature and its injectivity radius
is positive.
\begin{enumerate}
\item A \emph{hyperbolic structure}, denoted by $\CT = \CT_M$,
of a surface $M$ is a choice of  $(M,J_0, g_0): = (M,J_\CT,g_\CT)$
that is tame.
\item
$\CT$ also determines a symplectic form
$$
\omega_\CT = g_0(J_0 \cdot, \cdot)
$$
which we call $\CT$-symplectic form.
\item We then denote by $\CJ_{\CT}$ the $\omega_\CT$-tame almost
complex structures.
\end{enumerate}
\end{defn}
Now it is easy to check that $\CJ_\CT$ is contractible with respect to
the $C^\infty$ topology of $\End(TM)$ induced from
that of $(M,\CT)$. We then use the associated (almost) K\"ahler metric
$$
g_\CT = \omega_\CT(\cdot, J_\CT \cdot)
$$
for all the relevant geometric estimates appearing in the study of pseudoholomorphic curves
needed in the Floer theory throughout the paper.

\begin{defn}\label{defn:quasiisometry-intro} Two hyperbolic Riemann surfaces $(M,\CT)$ and $(M', \CT')$
are equivalent, if there is a diffeomorphism $\phi: M \to M'$
such that the two hyperbolic structures $\CT$ and $\phi^*\CT'$ on
$M$ are quasi-isometric.  We denote by
$$
\text{\rm QC}(M,\CT)
$$
the automorphism group of $\CT$.
\end{defn}

Thanks to the infinite volume hypothesis, a version of Greene and Shihohama's theorem \cite{Greene-Shiohama}
proves that the two symplectic forms
$\omega_{\CT}$ and $\phi^*\omega_{\CT'}$ are also
symplectomorphic by a quasi-isometric symplectic isotopy.
This will implicates that the Fukaya category we construct in the present
paper is an invariant of quasi-isometric symplectic isotopy.

\begin{defn} Let $\omega_\CT$ be the $\CT$-symplectic form
of a hyperbolic Riemann surface $(M,\CT)$. The automorphism group
of $\omega_\CT$ is the intersection
$$
\Symp(M,\omega_\CT) \cap \text{\rm QC}(M,\CT)
=: \Symp_{\text{\rm QC}}(M,\omega_\CT).
$$
\end{defn}
This takes care of the points (1) and (2) above.

\subsection{Tame Liouville manifolds and gradient-sectorial Lagrangian submanifolds}

To answer to the point (3) above, we recall that since any noncompact
surface is homotopy equivalent to one-dimensional CW-complex, the
symplectic form $\omega_\CT$ is exact, i.e, it can be written as
$$
\omega_\CT = d\alpha_\CT
$$
for some one form $\alpha_\CT$ which is uniquely defined up to the
transformation
$$
\alpha_\CT \mapsto \alpha + dg
$$
for some smooth function $g: M \to \R$. By a suitable such transformation,
we can make the one-from tame in that
$$
\|\nabla^k \alpha\|_{C^0} \leq C(k).
$$
We denote by $X_{\alpha_\CT}$ the Liouville vector field associated to
$\alpha_\CT$, i.e., the unique vector field $X$ determined by
\be\label{eq:XCT}
X \rfloor d\alpha_\CT = \alpha_\CT.
\ee
Furthermore any noncompact Riemann surface admits
a plurisubhamonic exhaustion function $\psi$ which satisfies the inequality
$$
-d(d\psi \circ J) \geq 0
$$
as a $(1,1)$-current, and its sub-level sets compact. Following the term of \cite{oh:intrinsic},
we call the pair $(\psi,J)$ a pseudoconvex pair.
This enables us to take the class of
\emph{gradient sectorial Lagrangians} with respect to a given
\emph{pseudoconvex pair} $(\psi,J)$ (at infinity) as the objects of our Fukaya category. The notion of
\emph{gradient sectorial Lagrangians} in general dimensions is introduced by the second-named author in \cite{oh:intrinsic}
whose definition we recall now.

Now we are ready to introduce the
following notion of $\CT$-tame Liouville manifolds in general.

\begin{defn}[$\CT$-tame Weinstein manifolds]
We call the triple $(M,\CT,\alpha_\CT)$ a
\emph{$\CT$-tame Liouville manifold} if the following hold:
 \begin{enumerate}
 \item There exists a pair $(\psi, J)$ of a Morse function $\psi$ and a $\omega_\CT$-tame
 almost complex structure $J$ such that $(\psi,J)$ is a
  pseudoconvex pair on $M \setminus K$,
i.e., $-d(d\psi \circ J) = g d\alpha$ for some function $g: M \setminus K \to \R$ with
$g \geq 0$.
\item There exists a constant $C = C(k)$ depending on $k$
such that
$$
\|\nabla^k \psi \| _{C^0} \leq C(k)
$$
for all $k\geq 1$.
\item On each cylindrical end, if any, the associated Liouville vector field
$X_{\alpha_\CT}$ is gradient-like
for the function $\psi$, i.e., $\frac{X[\psi]}{|\grad \psi|^2} =: h > 0$ for some
smooth positive function $h > 0$.
\end{enumerate}
\end{defn}
Now we recall the class of \emph{$\psi$-gradient-sectorial Lagrangian branes} with respect to
the given pseudoconvex pair $(\psi,J)$ from \cite{oh:intrinsic}
and define the notion of \emph{$\psi$-wrapped Fukaya category} whose objects are
$\psi$-gradient-sectorial Lagrangian submanifolds:
Consider the normalized gradient vector field of $\psi$ given by
\eqn\label{eq:Z-fraks}
Z_{\psi}: = \frac{\grad \psi}{|\grad \psi|^2}
\eqnd
with respect to the usual metric
$$
g_J(v, w): = \frac{d\alpha(v, J w) + d\alpha(w,Jv)}{2}.
$$
The following definition is a special case of the definition of gradient-sectorial
Lagrangian branes from \cite{oh:intrinsic} restricted to the Liouville manifolds $(M,\alpha)$.

\begin{defn}[Gradient-sectorial Lagrangian branes]
\label{defn:gradient-sectorial-Lagrangian-intro}
Let $(M,\alpha)$ be a Liouville-tame symplectic manifold equipped with a
pseudoconvex pair $(\psi,J)$.
We say that a proper exact Lagrangian submanifold $L$ of $(M,\alpha)$ is
\emph{$\psi$-gradient-sectorial} if there exists a sufficiently large $r_0> 0$ such that
$L \cap {\psi}^{-1}([r_0,\infty))$ is $Z_{\psi}$-invariant, i.e., $Z_{\psi}$ is tangent to
$L \cap {\psi}^{-1}([r_0,\infty))$.
\end{defn}
When $\psi$ is fixed, we will just call it \emph{gradient sectorial} dropping $\psi$.
Also note that once $\psi$ is defined on $M$ and $n\in \ZZ_{>0}$ is not a critical value of $\psi$,
$\psi^{-1}([0,n])$ is a Liouville subdomain of $M$ and denote it as $M^{\leq n}$.
Then $M^{\leq 1}\subset M^{\leq 2}\subset \cdots$ gives us a compact exhaustion of $M$ by Liouville
subdomains.

\subsection{Statements of main theorems}

Once an orientable separable surface without boundary
$M$ is given, we will construct a Liouville-tame symplectic surface $M'$ with a pseudoconvex pair $(\psi,J)$ according to its end structure and topological structure of some compact subset.
Such $M'$ is homeomorphic to $M$ which provides a normal form
of the given hyperbolic structure $\CT$. We will call such surface as a
\emph{standard surface}. (See Subsection \ref{subsec:standard-surface}
for the precise definition.)

\begin{theorem}
Let $(M,\CT)$ be a noncompact hyperbolic Riemann surface without boundary.  Then there is a unique representative modulo quasi-isometry,
which we call a \emph{standard surface} equipped with
a pseudoconvex pair $(J,\psi)$, i.e., with a Weinstein structure.
\end{theorem}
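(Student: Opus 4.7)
The plan is to reduce the theorem to the topological classification of noncompact surfaces and then realize each homeomorphism type by an explicit geometric assembly carrying all the required structure. By the Ker\'ekj\'art\'o--Richards classification, an orientable separable noncompact surface $M$ without boundary is determined up to homeomorphism by the triple $(g(M), E(M), E_g(M))$, where $g(M) \in \{0,1,\ldots,\infty\}$ is the genus, $E(M)$ is the compact Hausdorff space of ends, and $E_g(M) \subset E(M)$ is the closed subset of non-planar ends. Tameness of the hyperbolic structure $\CT$ pins down this data, and provides the target quasi-isometry class via the reference metric $g_\CT$.

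Next I would build a standard model $M'$ by gluing three kinds of building blocks along standardized collars: a hyperbolic pair of pants, a cylindrical end $[0,\infty) \times S^1$, and a once-punctured torus realizing a genus attachment. Each block is equipped with a hyperbolic metric of bounded curvature and uniform positive injectivity radius, a canonical primitive $\alpha$ of its area form whose Liouville vector field is tangent to the boundary circles, and a canonical pluri-subharmonic Morse function $\psi_0$ with uniformly bounded derivatives $\|\nabla^k \psi_0\|_{C^0} \leq C(k)$ for which $X_\alpha$ is gradient-like. The combinatorial gluing pattern is recorded by a locally finite trivalent tree whose set of ends is homeomorphic to $E(M)$, realized as a closed subset of the Cantor set, with non-planar ends $E_g(M)$ marked by genus attachments. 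A partition of unity subordinate to this block decomposition, whose cutoff functions also have uniformly bounded derivatives, glues the local $\psi_0$'s into a global exhaustion $\psi$ with the required estimates, and an analogous procedure for the Liouville forms produces a tame $\alpha_\CT$ satisfying \eqref{eq:XCT}. This yields the desired pseudoconvex pair $(J,\psi)$ and Weinstein structure on $M'$.

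For uniqueness modulo quasi-isometry, given two standard models $M'$ and $M''$ built from the same end data, matching their building-block decompositions produces a diffeomorphism $\phi: M' \to M''$ that is block-by-block an isometry up to uniformly bounded distortion, hence globally a quasi-isometry. The main obstacle will be handling accumulation points of $E(M)$ in the Cantor set: near such a point infinitely many blocks must be packed into the surface, and one must carefully prescribe the ``depth'' at which each block is attached so that bounded curvature, positive injectivity radius, the exhaustion property of $\psi$, and the gradient-like compatibility of $X_{\alpha_\CT}$ with $\psi$ all hold simultaneously. This is precisely where the description of the ideal boundary as a subset of the Cantor set, highlighted in the abstract, provides the combinatorial control needed to make the construction uniform, and underpins the claim that any two such assemblies representing $(M,\CT)$ are quasi-isometric.
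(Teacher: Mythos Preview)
Your overall architecture matches the paper's: reduce to Richards' classification, assemble a model from a small list of building blocks indexed by a tree encoding the end space, and endow it with a Weinstein structure. The building blocks in the paper are a pair of pants, a cylinder with a genus (two boundary circles, not a once-punctured torus), and an infinite cylinder, but this is cosmetic.

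There is one genuine technical gap. You propose to manufacture the global plurisubharmonic exhaustion $\psi$ and the Liouville primitive $\alpha$ by a partition-of-unity gluing of the block-local data. This does not work as stated: plurisubharmonicity is a second-order convexity condition, and a convex combination $\chi\psi_1+(1-\chi)\psi_2$ picks up terms involving $d\chi$ and $d(d\chi\circ J)$ of indeterminate sign, so the glued function is generally not $J$-convex on the overlap collars. The same objection applies to gluing Liouville primitives: $d(\chi\alpha_1+(1-\chi)\alpha_2)$ acquires $d\chi\wedge(\alpha_1-\alpha_2)$, which need not be a positive multiple of the area form, and the resulting vector field need not be gradient-like for $\psi$. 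The paper avoids this entirely by invoking Weinstein's contact surgery: starting from a convex $\psi$ on the base disk, each pair of pants or genus is realised as a standard Weinstein $1$-handle in $(\R^2,dx\wedge dy)$ with the explicit Morse function $f=x^2-\tfrac12 y^2+2$ and Liouville field $-2x\partial_x+y\partial_y$, and the handle-attachment theorem extends both $\psi$ and the Liouville structure across the handle while preserving pseudoconvexity. This inductive extension, not a cutoff average, is what produces the global pseudoconvex pair.

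A smaller omission: to place the constructed standard surface in the symplectomorphism class of $(M,\omega_\CT)$ the paper appeals to a Greene--Shiohama volume-matching argument, adjusting the diffeomorphism block by block so that the pulled-back area form agrees with the standard one on each $K_i$. Your sketch does not mention this step, and without it you only have a diffeomorphism, not the symplectic (hence quasi-isometric K\"ahler) identification the theorem asserts.
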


Now we define a Fukaya category on such a tame Weinstein
triple $(M,J,\psi)$.

\begin{theorem}
Let $M'$ be a standard surface with a pseudoconvex pair $(J,\psi)$.
Then we can define a Fukaya category $Fuk(M')$ whose objects are
gradient sectorial Lagrangians.
\end{theorem}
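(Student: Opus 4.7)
The plan is to mimic the standard wrapped Fukaya category construction on a Liouville manifold, but carefully exploit the exhaustion $M^{\leq 1} \subset M^{\leq 2} \subset \cdots$ by Liouville subdomains together with the tameness conditions on $\psi$ and on the hyperbolic metric $g_\CT$ to control behavior at infinity. First, given gradient-sectorial Lagrangians $L_0, L_1$, I would choose a wrapping Hamiltonian of the form $H = h(\psi)$ on $\psi^{-1}([r_0,\infty))$, with $h$ chosen so that the associated Hamiltonian flow moves $L_0$ transversally off $L_1$ along the normalized gradient direction $Z_\psi$. The $Z_\psi$-invariance of $L_i \cap \psi^{-1}([r_0,\infty))$ guarantees that the wrapped Lagrangian $\phi_H^t(L_0)$ remains gradient-sectorial, and that $\phi_H^t(L_0) \cap L_1$ is a discrete set lying inside some compact $M^{\leq n}$ (after perhaps an additional compactly supported Hamiltonian perturbation to achieve transversality).

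Next, I would define the Floer cochain complex $CF(L_0, L_1)$ as the free module on these transverse intersection points and define $\mu^1$ by counting Floer strips $u : \R \times [0,1] \to M$ with boundary on $L_0, L_1$ and asymptotics at the generators. The key analytic step is a $C^0$-bound: because $(\psi, J)$ is a pseudoconvex pair, the function $\psi \circ u$ is subharmonic away from a compact set, and the gradient-sectorial condition forces $\psi \circ u$ along $\partial(\R\times[0,1])$ to satisfy a one-sided tangency condition with the level sets of $\psi$; the maximum principle together with the Liouville tameness of $\alpha_\CT$ then confines all Floer trajectories to a fixed compact $M^{\leq N(L_0,L_1)}$. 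Transversality inside this compact region is standard since $\CJ_\CT$ is contractible in $C^\infty$, and Gromov compactness applies on $M^{\leq N}$ because $g_\CT$ has bounded geometry (bounded curvature and positive injectivity radius). Since we are in real dimension two, holomorphic polygons reduce essentially to immersed polygons cut out by Lagrangian arcs, which makes the energy/area estimates particularly clean.

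The higher $A_\infty$-operations $\mu^k$ for $k\ge 2$ are defined analogously by counting rigid pseudoholomorphic $(k+1)$-gons on perturbed strip-like ends, with the same $C^0$-bound argument ensuring compactness. The $A_\infty$-relations then follow from the usual codimension-one boundary analysis of moduli spaces, all of which takes place inside a sufficiently large compact $M^{\leq N}$. To close the construction, one needs to check that the resulting category is independent of auxiliary choices (the wrapping profile $h$, the compactly supported perturbations, and the choice of $J \in \CJ_\CT$) up to quasi-equivalence; this follows from standard continuation-map arguments, once one notes that continuation trajectories are again confined to a compact subdomain by the same pseudoconvex maximum principle.

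The main obstacle I anticipate is the $C^0$-confinement step for Floer trajectories: unlike the cylindrical-end case, the Liouville vector field $X_{\alpha_\CT}$ need not be genuinely radial outside a compact set, and $\psi$ is only gradient-like for $X_{\alpha_\CT}$ with a variable positive factor $h$. One must therefore check carefully that the pseudoconvex inequality $-d(d\psi\circ J)\ge 0$ combined with the tangency of the Lagrangians to $Z_\psi$ (rather than to $X_{\alpha_\CT}$) is enough to run the integration-by-parts argument that forces $\psi\circ u \le \max\psi|_{\partial u}$, and that this maximum is attained only at intersection points lying in a fixed $M^{\leq N}$. Once this bound is in place, the remainder of the construction follows along standard lines because the surface is two-dimensional and the hyperbolic reference metric $g_\CT$ already provides all the uniform geometry one needs for Gromov compactness.
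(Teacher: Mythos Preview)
Your overall strategy---confine all holomorphic polygons to a compact sublevel set of $\psi$ via the maximum principle, then invoke the standard finite-type package inside $M^{\leq N}$---is exactly the paper's approach. The paper's Theorem~\ref{thm:pseudoconvex} carries out precisely the boundary strong-maximum-principle computation you anticipate: since both $Z_\psi(u(z'))$ and $\frac{\partial u}{\partial\theta}(z')$ lie in the Lagrangian tangent space $T_{u(z')}L$, their $d\alpha$-pairing vanishes, which after rewriting via $J$-holomorphicity forces the outward normal derivative of $\psi\circ u$ to vanish, contradicting Hopf's lemma. So the ``main obstacle'' you flagged is resolved exactly as you guessed.

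There is, however, one genuine error in your setup. You assert that for $H=h(\psi)$ the wrapped Lagrangian $\phi_H^t(L_0)$ remains gradient-sectorial and that $\phi_H^t(L_0)\cap L_1$ is a discrete set lying in some compact $M^{\leq n}$. On a \emph{noncylindrical} end this is false: the Hamiltonian flow of $h(\psi)$ moves points along level sets of $\psi$, and when those level sets pass through a pair-of-pants (figure-eight at a saddle value) the image of a gradient trajectory is no longer a gradient trajectory. The wrapped curve winds around each successive pair of pants and meets $L_1$ in an infinite sequence of points marching to infinity---see Figures~\ref{fig:ham_flow_near_gluing_parts}, \ref{fig:ham_flow_fin_cylinder}--\ref{fig:near_isolated_nonplanar_end} and Proposition~\ref{prop:mor_infinite_surfaces}. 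So neither ``remains gradient-sectorial'' nor ``intersections lie in a compact set'' holds in the interesting (infinite-type) case.

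The paper sidesteps this by taking $H=0$ for the \emph{definition} of the category: the structure maps count genuine $J$-holomorphic polygons with corners at points of $L_i\cap L_{i+1}$, and Theorem~\ref{thm:pseudoconvex} confines any such polygon to $\{\psi\leq \max_i\psi(p_i)\}$, which is compact because $\psi$ is exhausting. The $A_\infty$-relations (Lemma~\ref{lem:sol_infty}) then reduce to the relations in a fixed $\widehat{M^{\leq k}}$. Wrapping by $\kappa\circ\psi$ is introduced only afterwards, for the computation of morphism spaces, and there the paper explicitly allows infinitely generated Floer complexes, showing the generators outside a compact set organize into acyclic pairs $(x_i,y_i)$ with $\delta y_i\in\{x_i,\,x_i+x_{i+1}\}$. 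Your argument goes through once you either drop the wrapping at the definitional stage (as the paper does) or accept an infinitely generated complex and observe that, with asymptotics fixed, the same $C^0$-bound still makes each structure constant a finite count.
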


Moreover, such a Fukaya category is well-defined up to quasi-equivalence
as an invariant of the quasi-isometry class of $(M,\omega_\CT)$.  We
 call it a \emph{Fukaya category} of $M$ associated to the hyperbolic
structure $\CT$ and denote by $Fuk(M,\CT)$.

\begin{theorem}
Let $(M,\CT)$ be a separable surface without boundary equipped with
hyperbolic structure $\CT$. If $(M,\CT) \sim (M', \CT')$ in the sense of
\ref{defn:quasiisometry-intro}, then their
Fukaya categories $Fuk(M,\CT)$, $Fuk(M',\CT')$ are quasi-equivalent.
\end{theorem}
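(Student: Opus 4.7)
The plan is to use the preceding structural theorems: the quasi-isometry equivalence produces a standard-surface model on both sides, and the earlier Greene--Shiohama-type statement upgrades a quasi-isometric diffeomorphism to a quasi-isometric symplectomorphism; one then shows that such a symplectomorphism induces a functor of Fukaya categories which is a quasi-equivalence.

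First, I would unpack the hypothesis. By Definition \ref{defn:quasiisometry-intro}, there is a diffeomorphism $\phi: M \to M'$ such that $\CT$ and $\phi^*\CT'$ are quasi-isometric hyperbolic structures on $M$. Invoking the first main theorem, I replace both $(M,\CT)$ and $(M',\CT')$ by their standard-surface representatives; since these are unique modulo quasi-isometry, $\phi$ can be arranged as a quasi-isometry between two standard surfaces equipped with pseudoconvex pairs $(J,\psi)$ and $(J',\psi')$. Next, I apply the Greene--Shiohama-type result cited just after Definition \ref{defn:quasiisometry-intro}: because both surfaces have infinite volume and the relevant symplectic forms agree up to quasi-isometry, there is a quasi-isometric symplectic isotopy $\phi_t$ from $\phi^*\omega_{\CT'}$ to $\omega_\CT$. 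Composing with $\phi$, I obtain a quasi-isometric symplectomorphism $\Phi: (M,\omega_\CT) \to (M', \omega_{\CT'})$.

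The core step is to promote $\Phi$ to a functor $\Phi_*: \Fuk(M,\CT) \to \Fuk(M', \CT')$. On objects I send a $\psi$-gradient-sectorial Lagrangian brane $L$ to $\Phi(L)$, after a further Hamiltonian isotopy that is compactly supported outside a large sub-level set. The point is to verify that $\Phi(L)$ is $\psi'$-gradient-sectorial: outside a compact set both $L$ and $\Phi(L)$ are controlled by the Liouville/gradient flows, and because $\Phi$ intertwines $(\omega_\CT,\psi,J)$ with $(\omega_{\CT'},\psi',J')$ up to a bounded quasi-isometric distortion, the tangency of $L$ to $Z_\psi$ at infinity is converted into tangency of $\Phi(L)$ to $Z_{\psi'}$ after a sectorial adjustment at the ends (using that on each cylindrical end the Liouville vector field is gradient-like for $\psi$, by hypothesis). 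On morphisms, $\Phi$ pushes forward the almost complex structure $J\in\CJ_\CT$ to $\Phi_*J \in \CJ_{\CT'}$; the tameness and bounded-geometry estimates that underlie the construction of the wrapped Floer complex carry over verbatim under a quasi-isometry, so Floer chain complexes and $A_\infty$-structure maps correspond term by term. Independence of the Fukaya category from auxiliary choices (the particular pseudoconvex pair, tame almost complex structure, and perturbation data) is already established in the construction of $Fuk(M,\CT)$; this independence absorbs the residual ambiguity in $\Phi$.

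Finally, applying the same argument to $\Phi^{-1}$ produces a functor in the opposite direction, and the compositions $\Phi^{-1}_* \circ \Phi_*$ and $\Phi_* \circ \Phi^{-1}_*$ are homotopic to the identity through Hamiltonian continuation maps, which gives the quasi-equivalence.

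The main technical obstacle I anticipate is Step 3: showing that the pushforward of a gradient-sectorial Lagrangian is again gradient-sectorial after a sectorial isotopy. The Liouville forms $\alpha_\CT$ and $\Phi^*\alpha_{\CT'}$ need not coincide, only differ by an exact and tame one-form, and the two normalized gradient vector fields $Z_\psi$, $Z_{\psi'}$ match only up to a bounded quasi-isometric distortion. One must therefore produce, on each end, a compactly-in-the-ends-supported Hamiltonian isotopy that straightens $\Phi(L)$ to be strictly $Z_{\psi'}$-invariant beyond some larger sub-level set $\{\psi' \geq r_0'\}$; this uses that on cylindrical ends the gradient-like hypothesis in the definition of $\CT$-tame Liouville manifolds ties $Z_\psi$ to the Liouville vector field, so the sectorial normal-form procedure of \cite{oh:intrinsic} applies uniformly end by end.
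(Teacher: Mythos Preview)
Your outline agrees with the paper's proof through the Greene--Shiohama step: both arguments first produce a quasi-isometric symplectomorphism $\Phi:(M,\omega_\CT)\to(M',\omega_{\CT'})$. The divergence is in how one concludes.

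The paper does not attempt to verify directly that $\Phi$ carries $\psi$-gradient-sectorial Lagrangians to $\psi'$-gradient-sectorial ones. Instead it pulls everything back along $\Phi$ to the single symplectic manifold $(M,\omega_\CT)$, so that the comparison becomes: two choices of tame almost complex structure $J_1$, $J_2 = \Phi^*J'$ (with their associated plurisubharmonic data) on the same $(M,\omega_\CT)$, with quasi-isometric K\"ahler metrics. At that point the paper invokes the contractibility of $\CJ_\CT$ and the standard continuation argument to conclude quasi-equivalence ``as usual''. This sidesteps exactly the difficulty you flag in your Step~3: there is no need to straighten $\Phi(L)$ end-by-end to make it $Z_{\psi'}$-invariant, because the comparison is absorbed into the invariance of $Fuk(M,\CT)$ under change of auxiliary data within the fixed quasi-isometry class $\CT$.

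Your route is not wrong in principle, but the end-straightening you propose is genuinely delicate on infinite-type surfaces (infinitely many non-cylindrical ends, no uniform normal form), and you would essentially be reproving the independence-of-auxiliary-data statement by hand. The paper's reduction to contractibility of $\CJ_\CT$ is the cleaner and shorter way to close the argument; I would recommend you adopt it in place of your Steps~3--4.
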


We have made clear the condition for the bulk to be able to
define Fukaya category for which we need to equip the infinite-type
surface with the structure of hyperbolic Riemann surface, which
we denote by $\CT$. Since this structure will be fixed from now on,
we drop it from the notations for the various geometric structures
associated to it.

\subsection{Some computation of morphisms of $Fuk(M,\CT)$}

Let us consider the simplest example of infinite type surface $M$:
a surface $M$ with a cylindrical end and an end with infinitely many genus,
i.e. a cylinder $M$ with infinitely many genus on one end.
We can describe such a surface $M$ as a Lefschetz fibration $\pi: M \to C$
over the cylinder $C = \R \times S^1$. Put
the cylindrical coordinate $(s,t)$ on $C$. We ask the fibration to satisfy the following
on each \emph{non-cylindrical end}:
\begin{enumerate}
\item $\pi: M \to C$ is eventually $1$-periodic on each end.
\item The function
$$
s \circ \pi \to \R
$$
restricts to a Morse function on $[k,k+1]$ that carries a unique critical point of index 1
with the value $\frac14, \, \frac34$ respectively.
\item On each cylinder $[k,k+1]$ for $k \in \Z$ sufficiently large
$$
\pi^{-1}(s) \cong \begin{cases}
S^1 \quad & \text{for } \, s \in [k,k+\frac14) \cup (k+\frac34, k+1]\\
S^1\vee S^1  \quad & \text{for } \, s=k+\frac14 \text{ or }  s=k+\frac34\\
S^1 \sqcup S^1 \quad & \text{for } \, s \in (k+\frac14,k+\frac34)
\end{cases}
$$
\item $s\circ \pi$ defines a globally defined Morse function on $M$.
\end{enumerate}
We call an open subset of $M$ a \emph{cylindrical region with genus} if it admits
the aforementioned Lefschetz fibration structure over the cylinder
$$
[k_1, k_2], \quad k_1 < k_2, \quad k_i \in \Z \cup \{\pm \infty\}.
$$

\begin{figure}
\centering

\begin{tikzpicture}

\draw (-5,1) -- (5,1);
\draw (-5,-1) -- (5,-1);

\draw[] (-5,-1) arc (270:90:.2 and 1);
\draw[] (-5,-1) arc (-90:90:.2 and 1);

\draw[densely dashed] (-3,-1) arc (270:90:.2 and 1);
\draw[] (-3,-1) arc (-90:90:.2 and 1);

\begin{scope}[scale=0.8]
\path[rounded corners=24pt] (-.9,0)-- +(0.9,.6)-- +(1.8,0) +(0,0)-- +(0,0.34)--    +(1.8,0);
\draw[rounded corners=28pt] (-1.1,.1)-- +(1.1,-0.7)-- +(2.2,0);
\draw[rounded corners=24pt] (-.9,0)-- +(0.9,.6)-- +(1.8,0);
\end{scope}

\begin{scope}[scale=0.8]
\path[rounded corners=24pt] (2.5,0)-- +(0.9,.6)-- +(1.8,0) +(0,0)-- +(0,0.34)--    +(1.8,0);
\draw[rounded corners=28pt] (2.3,.1)-- +(1.1,-0.7)-- +(2.2,0);
\draw[rounded corners=24pt] (2.5,0)-- +(0.9,.6)-- +(1.8,0);
\end{scope}

\draw[fill=black] (4.5,0) circle (1pt);
\draw[fill=black] (4.7,0) circle (1pt);
\draw[fill=black] (4.9,0) circle (1pt);

\end{tikzpicture}

\caption{Cylinder with infinitely many genus on one end}
\label{fig:cyl_inf_genus}
\end{figure}
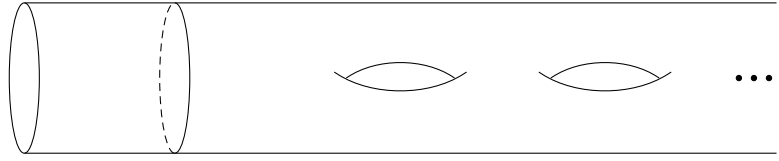

\begin{figure} \centering

\begin{tikzpicture}

\draw[] (-5,-1) arc (270:90:.2 and 1);
\draw[] (-5,-1) arc (-90:90:.2 and 1);

\begin{scope}
\draw[rounded corners=24pt] (-5,1) -- (1,1) --(3,3) -- (5,3);
\end{scope}

\begin{scope}
\draw[rounded corners=24pt] (-5,-1) -- (1,-1) --(3,-3) -- (5,-3);
\end{scope}

\begin{scope}
\draw[rounded corners=28pt] (5,1) -- (3,1) --(3,-1) -- (5,-1);
\end{scope}

\draw[densely dashed] (5,1) arc (270:90:.2 and 1);
\draw[] (5,1) arc (-90:90:.2 and 1);
\draw[densely dashed] (5,-3) arc (270:90:.2 and 1);
\draw[] (5,-3) arc (-90:90:.2 and 1);

\begin{scope}
\draw[blue,rounded corners=24pt] (-4.8,-0.5) -- (1.2,-0.5) --(3.3,-2.5) -- (5.16,-2.5);
\end{scope}

\begin{scope}
\draw[red,rounded corners=4pt] (-4.9,-0.9) -- (-4.5,-0.9) --(-4,1) -- (-3.8,1);
\draw[red,densely dashed, rounded corners=4pt] (-4,1) -- (-3.8,1) --(-3.3,-1) -- (-3.1,-1);
\draw[red,rounded corners=4pt] (-3.3,-1) -- (-3.1,-1)--(-2.6,1) -- (-2.4,1) ;
\draw[red,densely dashed, rounded corners=4pt] (-2.6,1) -- (-2.4,1) --(-1.9,-1) -- (-1.7,-1);
\draw[red,rounded corners=4pt] (-1.9,-1) -- (-1.7,-1) --(-1.5,-0.1)--(-0.2,-0.1)--(-0,-1)--(0.2,-1);
\draw[red,densely dashed, rounded corners=4pt] (-0,-1)--(0.2,-1) --(0.7,1)--(0.8,1.1);
\draw[red,rounded corners=4pt] (0.8,1.1) --(0.9,1.2)-- (1.4,-0.5) --(1.8,-0.7) --(3.1,-0.43)--(3.2,-0.6) ;
\draw[red,densely dashed, rounded corners=4pt] (3.1,-0.43)--(3.2,-0.6) --(2.75,-2.65)--(2.85,-2.75);
\draw[red,rounded corners=4pt] (2.75,-2.7)--(2.9,-2.75)-- (4,-2.1) --(5.22,-2.1);
\end{scope}

\draw[fill=black] (-4.4,-0.5) circle (1pt) node[below right]{$z$};
\draw[fill=black] (-2.98,-0.5) circle (1pt) node[below right]{$z^0$};
\draw[fill=black] (-1.6,-0.5) circle (1pt) node[below right]{$z^{-1}=y_1$};
\draw[fill=black] (-0.1,-0.5) circle (1pt) node[above right]{$x_1$};
\draw[fill=black] (3.53,-2.37) circle (1pt) node[below]{$y_2$};
\end{tikzpicture}

\caption{Hamiltonian flow of a Lagrangian near joining parts}
\label{fig:ham_flow_near_gluing_parts}
\end{figure}
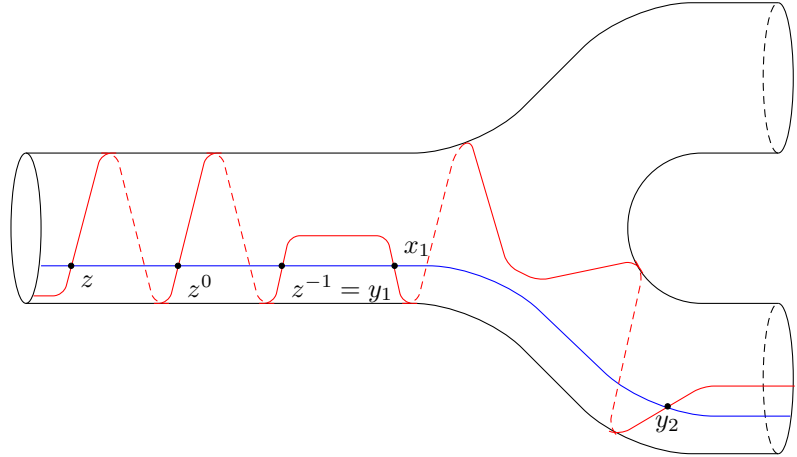

In Section \ref{subsec:standard-surface}, we will define a triple
$$
(\omega,J,\psi)
$$
of an area form $\omega$, a complex structure $J$ and $\psi$ on any given noncompact surface
for which $(J,\psi)$ is a pseudoconvex pair, i.e.,
$$
-d(d\psi \circ J) \geq 0
$$
as a $(1,1)$-current. (See \cite{oh:intrinsic}.)
Then the aforementioned Morse function $\psi$ above will be plurisubharmonic function
with respect to such a complex structure. In particular $(M,\omega)$ becomes a Weinstein manifold
(See \cite{cieliebak-eliashberg} for the definition.)

\begin{defn}[Weinstein triple] Let $M$ be a noncompact manifold. We call a triple $(\omega=d\alpha,J,\psi)$
a  \emph{Weinstein triple} if $(J,\psi)$ is a pseudoconvex pair and the Liouville vector field
$X$ is $\psi$-gradientlike.
\end{defn}

Recall that any Riemann surface can be regarded as a union of disjoint pairs of pants,
joining finite cylinders between them, and cylindrical ends.
On each cylindrical part, we put the area form of the form $\omega= \pi^*(ds \wedge dt)$
where $(s,t)$ is the relevant cylindrical coordinates.

We will then consider Hamiltonians $H$ of the type $H = \kappa \circ \psi$ for a one-variable function
function $\kappa: \R \to \R$. Since we set our Hamiltonian to be only determined by $\psi$,
the Hamiltonian flow of a point $p$ on $M$ should
remain in the connected component of level set $\psi^{-1}(\psi(p))$.

Consider $M$ on the cylindrical region with genus, say, $U \subset M$ with $\pi: U \to C$.
Note that each connected component of level set $\psi^{-1}(x)$ of $\psi$ in $U$ is homotopic to
$$
S^1\vee S^1
$$
 if $x=k+\frac14$ or $x= k+\frac34$ and otherwise homotopic to $S^1$.

We note that since each wedge sum point of $\psi^{-1}(x)$ for $x=k+\frac14$ or $x= k+\frac34$ is
a critical point, and the Hamiltonian flow cannot pass through those points.
This implies that in each level set of critical values of $\psi$, Hamiltonian flow should remain in
each connected open arc which corresponds to a connected components of
$S^1\vee S^1 \setminus \{\text{wedge sum point}\}$ up to homotopy.

Now comes the description of Lagrangian branes in our framework.
In addition to compact Lagrangian branes,
we consider $\psi$-gradient sectorial Lagrangian $L$. We require
$L$ on the cylindrical region with genus $U$ to transversally intersect each regular level set of $\psi$ eventually on
every cylindrical ends with genus. We say such $L$ is in general position, if it
does not pass through critical points outside a compact subset of $M$. We may and will assume that
$L$ is invariant eventually under the gradient flow of $\psi$.

This is illustrated in Figure \ref{fig:ham_flow_near_gluing_parts}.
In each critical level set, the Hamiltonian trajectory of a point cannot wrap around the wedge
$S^1\vee S^1$ and return to the initial position.
If we remove the critical value level sets from $U$, each connected component is homotopic to
a finite cylinder and the intersection
$$
\phi_H^\delta(L) \cap L
$$
with the region always occurs in pair contained in the same cylinder except for the one point in
the critical level $\psi^{-1}(0)$ which is also the critical level of $H$.
Therefore, every Hamiltonian chord contained in the non-cylindrical end comes in pairs
$$
(x^i_j, y^i_j)
$$
each of which forms an \emph{acyclic subcomplex} satisfying the relation
\be\label{eq:acyclic}
\delta x^i_j = 0, \quad \delta y^i_j = x^i_j.
\ee
for the Floer differential $\delta$. This example indicates to us the intuition that the Fukaya algebra
of $L$ is quasi-isomorphic to the subalgebra generated by the chords contained in the cylindrical end, which can be written as the following.

\begin{prop}
Let $L_1,L_2$ be gradient sectorial Lagrangian submanifold of $M$.
Then there exists a Liouville subdomain $M^{\leq n}$ of $M$ such that
$Mor(L_1,L_2)$ is quasi-isomorphic to subcomplex generated by generators contained in $M^{\leq n}$
or isomorphic to $Mor_{Fuk(\widehat{M^{\leq n}})}(\rho_n(L_1),\rho_n(L_2))$ where $\rho_n$ is
a Viterbo restriction functor from $Fuk(M)$ to $Fuk(\widehat{M^{\leq n}})$.
\end{prop}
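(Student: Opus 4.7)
The plan is to exploit the gradient-sectorial condition on $L_1,L_2$ together with the plurisubharmonicity of $\psi$ to split the Floer complex $Mor(L_1,L_2)$ into an ``interior'' piece generated by chords sitting in $M^{\leq n}$ and a manifestly acyclic ``exterior'' piece. By Definition~\ref{defn:gradient-sectorial-Lagrangian-intro}, first choose $r_0$ so large that both $L_1\cap\psi^{-1}([r_0,\infty))$ and $L_2\cap\psi^{-1}([r_0,\infty))$ are $Z_\psi$-invariant, then pick a regular value $n\in\mathbb Z_{>0}$ of $\psi$ with $n>r_0$, so that $M^{\leq n}:=\psi^{-1}([0,n])$ is a Liouville subdomain. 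As perturbation Hamiltonian take $H=\kappa\circ\psi$ with $\kappa$ increasing and linear at infinity, adapted to the pseudoconvex pair $(\psi,J)$; its flow preserves the connected components of the regular level sets of $\psi$.

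Following the discussion in the paragraphs preceding the proposition, on each cylindrical region with genus contained in $\{\psi\geq n\}$ the wedge-type critical points of $\psi$ block any Hamiltonian trajectory from wrapping across, and the $Z_\psi$-invariance of $L_i$ forces the chords of $\phi_H^\delta(L_1)\cap L_2$ outside $M^{\leq n}$ to lie inside a single inter-critical cylindrical slab. Elementary Floer-theoretic cancellation in each such slab produces the chord pairs $(x^i_j,y^i_j)$ with
$$
\delta x^i_j=0,\qquad \delta y^i_j=x^i_j,
$$
as recorded in \eqref{eq:acyclic}. To promote this local cancellation to a global subcomplex decomposition, we invoke the no-escape principle: any Floer strip $u$ whose asymptotic chords both lie in $\{\psi>n\}$ must have its image contained there, because $\psi\circ u$ is subharmonic (by $J$-plurisubharmonicity of $\psi$ on $M\setminus K$) and the boundary conditions from the $Z_\psi$-invariant parts of $L_i$ give a Neumann-type inequality compatible with the maximum principle. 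Consequently the exterior generators span an honest subcomplex $C_{\mathrm{ext}}$ of $Mor(L_1,L_2)$, which is acyclic by the pairing above; the quotient $Mor(L_1,L_2)/C_{\mathrm{ext}}$ is then quasi-isomorphic to $Mor(L_1,L_2)$ and is by construction freely generated by chords inside $M^{\leq n}$.

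Finally, the Viterbo restriction functor $\rho_n:Fuk(M)\to Fuk(\widehat{M^{\leq n}})$ is built precisely so that the generators of $Mor_{Fuk(\widehat{M^{\leq n}})}(\rho_n(L_1),\rho_n(L_2))$ correspond to the chords of $L_1,L_2$ lying in $M^{\leq n}$, with Floer trajectories matched under the same no-escape control applied inside the completion $\widehat{M^{\leq n}}$. Comparing the two models yields the claimed isomorphism.

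The main obstacle we anticipate is the direction and completeness of the cancellation $\delta y^i_j=x^i_j$: one must rule out not only mixed differentials from exterior into interior generators (handled by the no-escape principle) but also hidden contributions from Floer strips that a priori could traverse several cylindrical slabs at once. Controlling these requires a careful action-filtration argument combined with the topological obstruction imposed by the wedge critical loci of $\psi$, which together confine Floer strips in $\{\psi>n\}$ to a single slab up to homotopy. Once this containment is established, the acyclicity reduces to a standard Morse--Bott style cancellation along each finite cylinder.
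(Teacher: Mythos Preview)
Your approach has a genuine gap in the second step. The pairwise acyclic pattern $\delta y^i_j=x^i_j$ from \eqref{eq:acyclic} is specific to the introductory example and is \emph{not} universal. The paper's proof (Proposition~\ref{prop:mor_infinite_surfaces}) performs a case analysis on the shared ideal boundary point $p\in B(M)$ of $L_1,L_2$. When $p$ is a limit point of $B(M)$, or an isolated point of $B'(M)$ with the two ends in the same Liouville-isotopy class, the exterior chords instead satisfy the staircase relation
\[
\delta x_i=0,\qquad \delta y_i=x_i+x_{i+1}
\]
(see Figure~\ref{fig:near_twoside_lim_pt}). The resulting exterior complex has one-dimensional homology; no filtration trick will make it acyclic. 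Moreover, at the natural cut it is not even a subcomplex, since $\delta y_k$ lands partly on the interior generator $x_k$. The correct statement in these cases is that the \emph{interior} subcomplex $\langle x_1,y_1,\ldots,y_{k-1},x_k\rangle$ already has the same one-dimensional homology as the full infinite complex, so the inclusion is a quasi-isomorphism. Your anticipated obstacle is thus not a technicality but a structural failure of the exterior-acyclicity strategy; the case split is essential.

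As a secondary point, your no-escape invocation runs in the wrong direction: subharmonicity of $\psi\circ u$ yields a \emph{maximum} principle (Theorem~\ref{thm:pseudoconvex}), confining strips to sublevel sets $\{\psi\leq r\}$, not to superlevel sets $\{\psi>n\}$. This cannot by itself show that $C_{\mathrm{ext}}$ is closed under the differential. In the paper the confinement is used to localize $A_\infty$ products to a fixed $M^{\leq m}$, while the morphism computation here proceeds by the explicit bigon count in each of the cases above.
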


Since $M^{\leq n}$ gives us a compact exhaustion of $M$ by Liouville subdomains and
the pair $(Fuk(\widehat{M}^i)_{i\geq 1},(\rho_{i,j})_{1\leq i\leq j})$ is an inverse diagram
where $\rho_{i,j}:Fuk(\widehat{M^{\leq j}})\to Fuk(\widehat{M^{\leq i}})$ is a Viterbo restriction functor, we may expect Fukaya category on $M$ to be (quasi-)equivalent to the inverse limit
$\mathcal{X}$ of this diagram. However, that is the case only when $M$ is of finite type.

\begin{theorem}
The $A_\infty$-functor $\nu:Fuk(M) \to \mathcal{X}$ is a quasi-equivalence if and only if $M$ is of finite type.
\end{theorem}

Organization of the paper is now in order.
In Part 1, we summarize Richard's classification result and explain the scheme of
a construction of surfaces. We also provide theoretical foundation to define our Fukaya category using the gradient sectorial Lagrangian branes of \cite{oh:intrinsic}.
In Part 2, we will give a construction of the aforementioned Fukaya category.
In Part 3, we will describe generators and algebraic structure of Fukaya category.

\part{Preliminaries}

In this paper, we will provide a good representative in each symplectomorphism class of the surface with which
we can classify and deal with the infinite type ends more easily.
We first construct a family of such surfaces, which we call \emph{standard surfaces},
by inductively attaching a few building blocks to the previously given compact surface on its boundary, and show that
every noncompact surface can be constructed up to homeomorphism in this way.

\section{Review of homeomorphism classification of noncompact surfaces}
\label{sec:surfaces}

In this section, we briefly recall the main ideas of Richards' homeomorphism classification
of noncompact surfaces in \cite{richards} and explain how we can promote his classification result
one up to symplectomorphisms so that we can describe a generating set of
Lagrangian branes of the Fukaya category we will construct.
Leaving the full proofs of these results to Richards' paper \cite{richards},
we will explain main ideas of his proofs which will enter in our description of
the generating set.

\subsection{Homeomorphism classification and the end structure of surfaces}\label{subsec:end_structure}

We start with recalling some definitions from \cite{richards}.

We have the following standard definition of the \emph{ideal boundary}
from \cite{richards}, in which the definition is applied to a surface but can be equally
applied to general topological spaces.

\begin{defn}\label{defn:ends} Let $M$ be a noncompact topological space.
An \emph{end} of $M$ is an equivalence class of nested sequences $p=\{P_1\supset P_2 \supset \dots\}$ of
connected unbounded regions in $M$ such that
\begin{enumerate}
\item The boundary of $P_i$ in $M$ is compact for every $i$.
\item For any bounded subset $A$ of $M$, $P_i \cap A=\emptyset$ for sufficiently large $i$
\end{enumerate}
Two ends $p=\{P_1\supset P_2 \supset \dots\}$ and $q=\{Q_1\supset Q_2 \supset \dots\}$ are equivalent
if for any $n$ there is a corresponding integer $N$ such that $P_n \subset Q_N$ holds and vice versa.
We say an equivalence class
$$
[\{P_1\supset P_2 \supset \dots\}]
$$
an \emph{end} of $M$.
\end{defn}

\begin{defn}[Ideal boundary] \label{defn:ideal_bdy}
 The ideal boundary $B(M)$ of a surface $M$ is a topological space having ends as its elements and
equipped with the topology given as the following:
for any subset $U$ of $M$ whose boundary is compact in $M$, we define $U^*$ to be the set of all ends $p=\{P_1\supset P_2 \supset \dots\}$ such that $P_n\subset U$ for sufficiently large $n$.
All of such $U^*$ forms a basis of this topology.
\end{defn}

\begin{defn}
Let $p=\{P_1\supset P_2 \supset \dots\}$ be an end.
We say $p$ is planar and/or orientable if $P_n$ are planar and/or orientable for all sufficiently large $n$.
\end{defn}

\begin{defn}
 Let $p=\{P_1\supset P_2 \supset \dots\}$ be an end. We say $p$ is cylindrical
if $P_n$ are cylindrical for all sufficiently large $n$, i.e.,
$P_n \cong Q_n \times [0,\infty)$ for some compact manifold $Q_n$.
\end{defn}

\begin{rmk}
\begin{enumerate}
\item These definitions do not depend on the representative of an end.
\item A cylindrical end is planar and forms an isolated point in the ideal boundary.
If $M$ is a finite type surface, it has finitely many ends and every end is cylindrical.
\end{enumerate}
\end{rmk}

\begin{defn}[Ideal boundary triple]\label{defn:ideal-bdy-triple}
Consider the nested triples consisting of 3 sets
$$
B(M)\supset B'(M) \supset B''(M)
$$
where $B'(M)$ is the set of nonplanar ends, and $B''(M)$ is the set of nonorientable ends.
We call the triple the \emph{ideal boundary triple} of $M$.
\end{defn}

These are closed subsets of $B(M)$ by definition.

The following proposition is proved by Ahlfors and Sario in \cite{Ahlfors-Sario}.

\begin{prop} The ideal boundary $B(M)$ of a separable surface $M$ is totally disconnected, separable, and compact.
\end{prop}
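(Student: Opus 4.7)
The plan is to exhibit $B(M)$ as a closed subspace of a product of finite discrete sets; total disconnectedness, separability, and compactness then follow at once. First I would fix a compact exhaustion $K_1 \subset K_2 \subset \cdots$ of $M$ with $K_n \subset \Int(K_{n+1})$ and $\bigcup_n K_n = M$, arranging each $K_n$ to be a compact submanifold with boundary (possible after a small thickening, since $M$ is a surface). Then $\partial K_n$ is a finite disjoint union of circles, and each unbounded connected component of $M \setminus K_n$ must meet a nonempty subset of these circles; consequently the unbounded components $U_n^1, \dots, U_n^{k_n}$ of $M \setminus K_n$ are finite in number.

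Next I would observe that for any $U \subset M$ with compact boundary, the basic set $U^*$ of Definition \ref{defn:ideal_bdy} is \emph{clopen}. Indeed, any nested representative $\{P_i\}$ of an end satisfies $P_i \cap \partial U = \emptyset$ for $i$ large by axiom (2) of Definition \ref{defn:ends}, so by connectedness $P_i$ lies entirely in $U$ or in $M \setminus \overline{U}$; thus $B(M) = U^* \sqcup (M \setminus \overline{U})^*$. The countable family $\{(U_n^j)^*\}_{n,j}$ is then a basis for $B(M)$: given $V^*$ containing an end $p$, choose $n$ with $\partial V \subset \Int(K_n)$; the unique $U_n^j$ containing some tail $P_i$ of $p$ must lie inside $V$, hence $p \in (U_n^j)^* \subset V^*$.

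With a countable basis of clopen sets, $B(M)$ is automatically second countable (so separable) and totally disconnected, since two distinct ends eventually occupy different unbounded components of some $M \setminus K_n$ and the corresponding $(U_n^j)^*$ separates them. For compactness I would define
\[
\Phi : B(M) \longrightarrow \prod_{n \geq 1} \{1, \dots, k_n\}, \qquad \Phi(p) = (j_n(p))_{n \geq 1},
\]
where $j_n(p)$ is the unique index with $p \in (U_n^{j_n(p)})^*$. This $\Phi$ is a topological embedding onto the closed subset cut out by the nesting conditions $U_{n+1}^{j_{n+1}} \subset U_n^{j_n}$; by Tychonoff the product is compact, and a closed subset of a compact space is compact. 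The only input beyond point-set topology is the finiteness of unbounded components of $M \setminus K_n$, and this is precisely where the surface hypothesis enters, through the classification of compact $1$-manifolds applied to $\partial K_n$.
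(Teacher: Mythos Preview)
Your argument is correct and self-contained. The paper, however, does not actually prove this proposition: it simply attributes the result to Ahlfors--Sario and moves on. So there is no ``paper's own proof'' to compare against; you have supplied what the paper only cites.

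That said, your approach is the standard one and is almost certainly what Ahlfors--Sario do: fix a compact exhaustion by bordered subsurfaces, use the finiteness of boundary circles to get finitely many unbounded complementary components at each stage, observe that the basic sets $U^*$ are clopen, and then embed $B(M)$ as a closed subset of an inverse limit (equivalently, a countable product) of finite discrete sets. One small remark on your final sentence: the surface hypothesis is convenient but not strictly essential for the finiteness step. For any locally compact, locally connected, $\sigma$-compact Hausdorff space one can arrange $K_n \subset \Int K_{n+1}$ and argue that each component of $M \setminus K_n$ meeting the compact set $K_{n+1} \setminus \Int K_n$ is open, so only finitely many can do so, and the rest are bounded. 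Your route via the classification of compact $1$-manifolds is cleaner in the surface setting and perfectly adequate here.
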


Richards improved Ker\'ekj\'art\'o's earlier results and proved the following in \cite{richards}.

\begin{theorem}[{Theorem 1 of \cite{richards}}]\label{thm:Kerek}
Let $M$ and $M'$ be two separable surfaces of the same genus and orientability class.
Then $M$ and $M'$ are homeomorphic to each other if and only if
their ideal boundaries considered as triples of spaces are topologically equivalent.
\end{theorem}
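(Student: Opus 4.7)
The ``only if'' direction is essentially tautological: a homeomorphism $h:M\to M'$ sends nested sequences of unbounded connected regions with compact frontier to nested sequences of the same kind, hence induces a homeomorphism $h_*:B(M)\to B(M')$ of the ideal boundaries. Planarity and non-orientability of a neighborhood germ of an end are purely topological invariants of the end, so $h_*$ restricts to $B'(M)\to B'(M')$ and $B''(M)\to B''(M')$. That genus and orientability class are homeomorphism invariants is standard. So the interesting content is the converse.

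For the ``if'' direction the plan is a back-and-forth exhaustion. Given a homeomorphism $\phi:(B(M),B'(M),B''(M))\to(B(M'),B'(M'),B''(M'))$ of triples, I would first fix a pair of compact exhaustions $M=\bigcup_n \Omega_n$ and $M'=\bigcup_n \Omega_n'$ by connected subsurfaces with finitely many boundary circles, arranged so that each complementary region $U_{n,j}\subset M\setminus\Omega_n$ is a connected end-neighborhood whose closure in $B(M)$ is clopen, and so that the resulting clopen partitions of $B(M)$ have mesh tending to $0$. Such exhaustions exist because $B(M)$ is compact, separable and totally disconnected, so it admits arbitrarily fine clopen partitions by the Ahlfors--Sario proposition. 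One may further arrange that each $\Omega_n$ absorbs all handles/cross-caps and all isolated (cylindrical) ends of ``scale'' less than $n$.

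The inductive core is to produce, for each $n$, a homeomorphism $h_n:K_n\to K_n'$ between compact connected subsurfaces with $\Omega_n\subset K_n$ and $\Omega_n'\subset K_n'$, such that (a) $h_{n+1}$ extends $h_n$ on a collar, and (b) the clopen partition of $B(M)$ induced by the complementary regions of $K_n$ corresponds under $\phi$ to the partition induced by the complementary regions of $K_n'$. Granted such a partition refinement (which one can always arrange by enlarging both $K_n$ and $K_n'$), each pair of matched complementary regions $(U,\phi(U))$ is flanked by a compact piece of surface sitting between $K_n$ and $K_{n+1}$; the classification of compact surfaces with boundary determines the topological type of this piece by its number of boundary circles together with the genus/crosscap count it must contribute. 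Because $U$ and $\phi(U)$ meet $B'$ and $B''$ in corresponding closed subsets, one can distribute the genera and cross-caps on both sides so the two pieces are homeomorphic, and extend $h_n$ across them.

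The hard part will be the bookkeeping of topological complexity at each finite stage, especially when total genus or non-orientability is infinite: one must ensure that no complementary region is starved of the handles or cross-caps it will eventually require to host the nonplanar or nonorientable ends of $B(M)$ lying in its closure. The resolution is to subdivide partitions finely enough at each step so that, for each matched pair $(U,\phi(U))$, the intersections $U\cap B'(M)$ and $\phi(U)\cap B'(M')$ (and likewise for $B''$) correspond under $\phi$ and each new annular collar receives exactly the increment of handles or cross-caps dictated by the next refinement. Compactness of $B(M)$ makes this a finite matching problem at each stage; the same-genus and same-orientability-class hypotheses ensure the global totals agree. Taking the direct limit of the $h_n$ yields a well-defined homeomorphism $M\to M'$, completing the argument.
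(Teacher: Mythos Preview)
Your proposal is correct and follows essentially the same strategy the paper attributes to Richards: exhaust $M$ and $M'$ by nested compact bordered subsurfaces $A_1\subset A_2\subset\cdots$ and $A_1'\subset A_2'\subset\cdots$, then inductively build homeomorphisms $f_n:A_n\to A_n'$ extending $f_{n-1}$, using at each stage that a compact connected bordered surface is determined by orientability, genus, and number of boundary circles. Your additional detail about matching clopen partitions of the ideal boundaries under $\phi$ and distributing handles/cross-caps to the correct complementary regions is exactly the bookkeeping Richards carries out; the paper's summary omits it, but this is the substance of the argument.
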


Combining this with the following well-known result in general topology
\begin{prop}\label{prop:Subset_Cantor}
Any compact, separable, totally disconnected space $X$ is homeomorphic to a subset of the Cantor set.
\end{prop}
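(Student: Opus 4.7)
The plan is to realize the Cantor set as the product space $\{0,1\}^{\mathbb{N}}$ and to embed $X$ into it by encoding each point through a countable separating family of clopen subsets. This is the classical Stone-theoretic route, and all three hypotheses on $X$ feed into the construction in essential ways.

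First I would establish that $X$ admits a basis of clopen sets. In a compact Hausdorff totally disconnected space, distinct points can always be separated by a clopen set: by total disconnectedness the quasi-component of $x$ is $\{x\}$, and in the compact Hausdorff setting quasi-components coincide with connected components, so the intersection of all clopen neighborhoods of $x$ is $\{x\}$; a standard compactness argument then upgrades this to separation of a point from any disjoint compact set by a clopen. Consequently the clopen sets form a basis for the topology of $X$.

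Next, using separability -- equivalently, second countability for the compact metrizable spaces that arise in our application as ideal boundaries -- I would extract a countable subfamily $\{U_n\}_{n\in\mathbb{N}}$ of this clopen basis. I would then define
\[
\Phi : X \longrightarrow \{0,1\}^{\mathbb{N}}, \qquad \Phi(x) := (\chi_{U_n}(x))_{n\in\mathbb{N}}.
\]
Continuity is immediate because each indicator $\chi_{U_n}$ is continuous on the clopen set $U_n$; injectivity holds since any two distinct points of $X$ are separated by some $U_n$ from a basis of clopens. Because $X$ is compact and $\{0,1\}^{\mathbb{N}}$ is Hausdorff, $\Phi$ is automatically a homeomorphism onto its image, giving the desired embedding into the Cantor set.

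The main obstacle, in my view, is the passage from separability to the existence of a countable clopen basis: separability of a compact Hausdorff space does not in general imply second countability (consider $\beta\mathbb{N}$), so one must use that $X$ is additionally totally disconnected and in fact the Stone space of a countable Boolean algebra of clopens -- as holds for our $X = B(M)$, which by Definition \ref{defn:ideal_bdy} arises as the inverse system of complements of compact sets in a compact exhaustion of the separable surface $M$, producing a countable clopen subbasis automatically. Once this hurdle is cleared the remaining steps are formal.
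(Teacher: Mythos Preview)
The paper does not give its own proof of this proposition; it is quoted as a ``well-known result in general topology'' and used as a black box. So there is nothing in the paper to compare your argument against line by line.

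Your approach---realize the Cantor set as $\{0,1\}^{\mathbb N}$, produce a countable separating family of clopen sets, and map $X$ in via the product of characteristic functions---is the standard one and is correct once the countable clopen basis is in hand. You also correctly flag the real issue: with the modern meaning of ``separable'' (countable dense subset), the proposition as stated is in fact \emph{false}. Your own example $\beta\mathbb N$ is compact, Hausdorff, totally disconnected, and separable, yet has cardinality $2^{\mathfrak c}$ and so cannot embed in the Cantor set. Total disconnectedness does not rescue the argument, contrary to the hint in your last paragraph.

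The resolution is terminological. In Richards' 1963 paper (and in much of the older surface-topology literature) ``separable'' is used where we would now say ``second countable''; for manifolds the two notions coincide, and the ideal boundary $B(M)$ of a second-countable surface is genuinely second countable (hence metrizable) by construction. With that reading, or simply by assuming $X$ metrizable, your proof goes through cleanly: a compact metrizable space is second countable, a second-countable zero-dimensional space has a countable clopen basis, and your map $\Phi$ is then a closed embedding. Your fallback to the specific $X=B(M)$ is therefore adequate for the paper's purposes, but you should state plainly that the general proposition requires the stronger hypothesis rather than suggest that total disconnectedness alone fills the gap.
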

Richards obtained the following complete classification result of noncompact surfaces \emph{up to homeomorphism}.

\begin{theorem}[{Theorem 2 of \cite{richards}}] \label{thm:Richards}
Let $(X,Y,Z)$ be any triple of compact, separable, totally disconnected spaces with $X\supset Y\supset Z$.
Then there is a surface $M$ whose ideal boundary triple $(B(M),B'(M),B''(M))$ is
topologically equivalent to the triple $(X,Y,Z)$.
\end{theorem}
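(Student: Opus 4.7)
The plan is to construct $M$ explicitly from the combinatorial data $(X,Y,Z)$ and then verify that its ideal boundary triple realizes $(X,Y,Z)$ topologically. The starting point is Proposition \ref{prop:Subset_Cantor}: since $X$ is compact, separable and totally disconnected, fix a topological embedding $X \hookrightarrow C \subset [0,1] \subset S^2$ into the Cantor set sitting inside the $2$-sphere. Let $M_0 := S^2 \setminus X$. Because $X$ is totally disconnected and closed in $S^2$, a neighborhood basis of any point $x \in X$ in $S^2$ can be chosen to consist of disks whose boundaries lie in $M_0$, so the obvious map $X \to B(M_0)$ sending $x$ to the end represented by $\{D_n(x) \setminus X\}$ is a homeomorphism of $X$ with $B(M_0)$. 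At this stage $M_0$ is planar and orientable, so $B'(M_0) = B''(M_0) = \emptyset$.

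Next, I would modify $M_0$ to create nonplanar and nonorientable ends exactly over $Y$ and $Z$. Choose a countable open basis $\{V_n\}$ of $S^2$ restricting to one on $X$, and for each $n$ fix a finite open cover $\{W_{n,1}, \dots, W_{n,k_n}\}$ of $Y$ by basis elements of diameter at most $1/n$; do the analogous thing for $Z$ using a sequence $\{W'_{n,j}\}$. Inside each $W_{n,j} \cap M_0$ (resp.\ each $W'_{n,j} \cap M_0$) pick a small disk disjoint from the previously chosen disks and perform one handle attachment (resp.\ one crosscap attachment / non-orientable connected sum along a M\"obius band). Call the resulting surface $M$. These modifications are local and leave the complement of a small neighborhood of $Y \cup Z$ unchanged, so the natural identification still gives a homeomorphism $X \cong B(M)$.

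Now I would check the two remaining identifications. An end $x \in X$ is nonplanar iff every neighborhood of it in $M$ contains a handle, i.e.\ a non-separating simple closed curve; by construction the set of handle centers accumulates at a point $x \in X$ precisely when every neighborhood of $x$ meets infinitely many $W_{n,j}$, which by the diameter bound $1/n$ and the compactness of $Y$ happens exactly when $x \in Y$. Hence $B'(M) = Y$ as a subset of $B(M) \cong X$. The same argument for crosscaps shows $B''(M) = Z$, using the standard fact that an end is nonorientable iff arbitrarily small neighborhoods of it contain a M\"obius band. The inclusions $Z \subset Y \subset X$ are respected because crosscap attachment also destroys planarity, so every nonorientable end is automatically nonplanar.

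The main technical obstacle is the bookkeeping in the middle step: one must make sure that the chosen disks for successive handles and crosscaps are pairwise disjoint (so the attachments are well-defined and surgery is local), that they do not contain any point of $X$ (so the homeomorphism $X \cong B(M)$ really passes to the modified surface), and above all that the diameters shrink fast enough so that the set of accumulation points of handle centers is exactly $Y$ and not some larger subset of $X$ — this is where one uses that $Y$ is closed together with the $1/n$ diameter bound. Once these choices are made carefully, the verification of the three equalities $B(M) \cong X$, $B'(M) \cong Y$, $B''(M) \cong Z$ reduces to unwinding the definitions in Section \ref{subsec:end_structure}.
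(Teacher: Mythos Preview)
Your argument is correct and follows the same overall strategy as the paper (which is summarizing Richards' original proof): embed $X$ in the Cantor set inside $S^2$, remove it, and then perform local surgeries---handle attachments accumulating exactly at $Y$ and crosscap attachments accumulating exactly at $Z$---to realize the nonplanar and nonorientable parts of the ideal boundary. The verification that the accumulation sets are precisely $Y$ and $Z$ via the $1/n$ diameter bound and closedness of $Y,Z$ is the right mechanism.

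The one substantive difference is organizational rather than logical. The paper does not use arbitrary shrinking covers $\{W_{n,j}\}$; instead it works with the canonical nested system $\cD$ of triadic disks coming from the Cantor-set structure, and for each $D\in\cD$ places the handle/crosscap circles $C^\pm(D)$ in a fixed position inside $D$. This buys two things that your ad hoc covers do not: (i) the disks in $\cD$ are \emph{nested}, so the lattice of disks under inclusion is a rooted binary tree whose infinite branches biject with points of $X$, making the identification $X\cong B(M)$ and the subsequent checks essentially tautological; and (ii) this tree structure is exactly what the rest of the paper needs---the basis $\cS$, the blueprint pair $(\cS,\chi_\cS)$, the building blocks $B_I$, and ultimately the standard-surface construction all rest on it. Your approach proves the theorem as stated but would not feed into Sections~\ref{subsec:eq_class_end_structure}--\ref{sec:symplecto-classification} without redoing the bookkeeping.
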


\begin{theorem}[{Theorem 3 of \cite{richards}}] \label{thm:Repn_surface}
Every surface is homeomorphic to a surface formed from a sphere $S^2$ by first removing a closed totally disconnected set $X$ from $S^2$, then removing the interiors of a finite or infinite sequence $D_1,D_2,\dots$ of nonoverlapping closed disks in $S^2 \setminus X$, and finally suitably identifying the boundaries of these discs in pairs. (It may be necessary to identify the boundary of one disk with itself to produce an odd ``cross-cap.") The sequence $D_1,D_2,\dots$ ``approaches $X$" in the sense that, for any open set $U$ in $S^2$ containing $X$, all but finitely number of the $D_i$ are contained in $U$.
\end{theorem}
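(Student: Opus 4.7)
The plan is to combine the homeomorphism classification (Theorem \ref{thm:Kerek}) with the embedding provided by Proposition \ref{prop:Subset_Cantor}. By Theorem \ref{thm:Kerek}, two surfaces of matching genus, orientability class, and ideal boundary triple are homeomorphic, so it suffices to produce a surface $M^\ast$ built by the recipe in the statement whose invariants coincide with those of the given surface $M$.

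First, embed the compact, separable, totally disconnected space $B(M)$ into $S^2$ as a closed subset $X$ using Proposition \ref{prop:Subset_Cantor} together with a standard embedding of the Cantor set into $S^2$. The punctured sphere $S^2 \setminus X$ is a planar orientable surface whose ends are canonically identified with $X$, but all of those ends are planar and orientable, so one must now inject the correct amount of genus and non-orientability. For each $p \in B'(M) \setminus B''(M)$, choose a nested sequence $U_1(p) \supset U_2(p) \supset \cdots$ of open neighborhoods of $p$ in $S^2$, mutually disjoint across distinct points of $X$, whose intersections with $X$ shrink to $\{p\}$; inside each shell $U_n(p) \setminus \overline{U_{n+1}(p)}$ insert a disjoint pair of small closed disks and identify their boundaries to attach a handle. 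For each $p \in B''(M)$ do the same but identify the boundary of a single disk with itself in each shell, to attach a cross-cap, together with any additional handles dictated by the local topological type at $p$. Finally, insert finitely many additional handles and/or cross-caps in a fixed compact region of $S^2 \setminus X$ to account for the compactly supported part of the genus and non-orientability of $M$.

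Three things then need to be checked about $M^\ast$. First, the resulting disk sequence ``approaches $X$'': this is automatic because each $U_n(p)$ shrinks to $p \in X$ and, by construction, only finitely many of the disks can sit outside any given open neighborhood of $X$ in $S^2$. Second, the ideal boundary of $M^\ast$ is canonically homeomorphic to $X \cong B(M)$ with the nested triple structure matching $(B(M), B'(M), B''(M))$. Third, the total genus and orientability class of $M^\ast$ agree with those of $M$, which is immediate from counting the inserted handles and cross-caps.

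The main obstacle is the second check: verifying that the shell construction produces a nonplanar (resp.\ nonorientable) end exactly at each $p \in B'(M)$ (resp.\ $p \in B''(M)$) and introduces no stray non-planarity or non-orientability at other ideal boundary points. Mutual disjointness of the neighborhood families $\{U_n(p)\}_{p \in X}$ is feasible because $X$ is totally disconnected in the Hausdorff space $S^2$, so the shells can be built inductively by refining a sequence of finite clopen partitions of $X$ adapted to the triple $(B(M),B'(M),B''(M))$, essentially the inductive device used in Richards' original proof. Once this disjointness is arranged, a cofinal neighborhood basis for the end of $M^\ast$ sitting over $p$ is given by the modified shells $U_n(p)$, and its planarity/orientability can be read off directly from which handles or cross-caps are attached in $U_n(p)$ for all large $n$. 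Theorem \ref{thm:Kerek} then yields the desired homeomorphism $M \cong M^\ast$.
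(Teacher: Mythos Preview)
Your high-level strategy---build a model surface $M^\ast$ of the prescribed form with the correct invariants, then invoke Theorem~\ref{thm:Kerek}---is exactly the paper's (and Richards') approach. The gap is in how you construct $M^\ast$.

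You propose to attach handles and cross-caps in shells $U_n(p)\setminus\overline{U_{n+1}(p)}$ indexed by \emph{points} $p\in B'(M)$, with the neighborhood families mutually disjoint across distinct $p$. This is impossible in general: $B'(M)$ may be perfect (for instance, equal to the entire Cantor set), and then $\{U_1(p):p\in B'(M)\}$ would be an uncountable pairwise-disjoint family of nonempty open subsets of the second-countable space $S^2$. Total disconnectedness of $X$ only lets you separate \emph{finitely} many points by clopen sets at a time; it does not produce simultaneous disjoint neighborhood systems around an uncountable set with accumulation everywhere. Your proposed rescue via ``refining a sequence of finite clopen partitions'' is not a repair of the per-point scheme but a silent replacement of it by a different construction.

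That different construction is precisely what Richards does and what the paper summarizes just above the statement: one indexes the attached handles not by points of $B'(M)$ but by \emph{basis disks}. For each $D$ in the nested family $\cD$ with $D\cap B'(M)\neq\emptyset$, attach exactly one handle inside $D$ (via the circles $C^\pm(D)$); similarly attach a cross-cap when $D\cap B''(M)\neq\emptyset$. Then an end $p$ is nonplanar iff every basis disk $D\ni p$ meets $B'(M)$, which---because $B'(M)$ is closed---holds iff $p\in B'(M)$; likewise for $B''(M)$. This handles all accumulation phenomena automatically, and the ``approaches $X$'' condition is immediate since the basis disks have diameters tending to zero. The finite adjustment of genus and orientability in a fixed compact region is then done exactly as you describe.
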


\begin{figure}
    \centering
    \subfloat{{
    \resizebox{6cm}{4cm}{
\begin{tikzpicture}

\draw (-3,0) -- (3,0);
\draw [densely dashed](0,0) ellipse [x radius=3.8cm ,y radius=2.4cm];

\draw (-2,0) circle (40pt);
\draw (2,0) circle (40pt);

\draw (-8/3,0) circle (18pt);
\draw (-4/3,0) circle (18pt);
\draw (4/3,0) circle (18pt);
\draw (8/3,0) circle (18pt);

\draw[fill=black] (-3,0) circle (1pt) node[below right]{$0$};
\draw[fill=black] (-1,0) circle (1pt) node[below left]{$\frac{1}{3}$};
\draw[fill=black] (1,0) circle (1pt) node[below right]{$\frac{2}{3}$};
\draw[fill=black] (3,0) circle (1pt) node[below left]{$1$};

\end{tikzpicture}
}
}}
    \qquad
    \subfloat{{
    \resizebox{4cm}{4cm}{
\begin{tikzpicture}[scale=0.7]

\draw[] (-5,1) arc (270:90:1 and 0.2);
\draw[] (-5,1) arc (-90:90:1 and 0.2);

\draw[rounded corners=10pt] (-4,1.2)-- +(0,-1)--+(1,-1)-- +(1,0);

\draw[] (-2,1) arc (270:90:1 and 0.2);
\draw[] (-2,1) arc (-90:90:1 and 0.2);

\draw[rounded corners=13pt] (-6,1.2)-- ++(0,-1)--++(1.5,-1.5)--++(0,-0.8);
\draw[rounded corners=13pt] (-1,1.2)-- ++(0,-1)--++(-1.5,-1.5)--++(0,-0.8);

\draw[] (-3.5,-2) arc (270:90:1 and 0.2);
\draw[] (-3.5,-2) arc (-90:90:1 and 0.2);

\draw[] (1,1) arc (270:90:1 and 0.2);
\draw[] (1,1) arc (-90:90:1 and 0.2);

\draw[rounded corners=10pt] (2,1.2)-- +(0,-1)--+(1,-1)-- +(1,0);

\draw[] (4,1) arc (270:90:1 and 0.2);
\draw[] (4,1) arc (-90:90:1 and 0.2);

\draw[rounded corners=13pt] (0,1.2)-- ++(0,-1)--++(1.5,-1.5)--++(0,-0.8);
\draw[rounded corners=13pt] (5,1.2)-- ++(0,-1)--++(-1.5,-1.5)--++(0,-0.8);

\draw[] (2.5,-2) arc (270:90:1 and 0.2);
\draw[] (2.5,-2) arc (-90:90:1 and 0.2);

\draw[rounded corners=15pt] (-2.5,-2)-- +(0,-1)--+(2,-2)--+(4,-1)-- +(4,0);
\draw[rounded corners=13pt] (-4.5,-2)-- ++(0,-1.8)--++(3,-1.5)--++(0,-0.5);
\draw[rounded corners=13pt] (3.5,-2)-- ++(0,-1.8)--++(-3,-1.5)--++(0,-0.5);

\draw[densely dashed] (-0.5,-6.1) arc (270:90:1 and 0.2);
\draw[densely dashed] (-0.5,-6.1) arc (-90:90:1 and 0.2);
\end{tikzpicture}
}
    }}
    \caption{a surface with 4 cylindrical ends}
    \label{fig:example}
\end{figure}
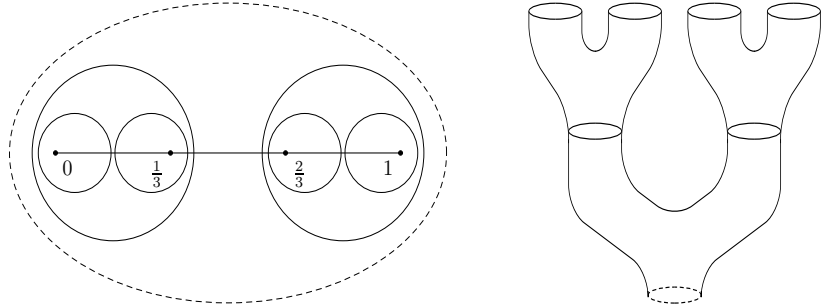

Next, leaving full proofs of these results to \cite{richards}, we will explain ideas of
Richards proof which we will suitably adapt to describe a generating set of Lagrangian branes
for our Fukaya category.

\subsection{Summary of Richards' classification theorems}

Although for our purpose of studying symplectic (and so orientable) surface cross-cap is
irrelevant, we include them in our summary of Richards' construction below for completeness' sake
and also to avoid too much deviation from the original proof of Richards' \cite{richards}.

\subsubsection{Theorem \ref{thm:Kerek}}

In the proof of Theorem \ref{thm:Kerek}, Richards decomposed $M$ and $M'$ into compact subsurfaces bordered by Jordan curves and used the fact that a connected compact bordered surface is topologically determined by
\begin{itemize}
\item
its orientability,
\item its genus, and
\item the number of its boundary curves.
\end{itemize}

More specifically,
Richards decomposed $M$ and $M'$ into $A_1\subset A_2\subset \dots$ and $A'_1\subset A'_2\subset \dots$ respectively such that
 $A_n$ and $A'_n$ are compact subsets and contained in the interior of $A_{n+1}$ and $A'_{n+1}$ respectively. After that, he used the induction to construct homeomorphism $f_n$ of $A_n$ onto $A'_n$ by extending $f_{n-1}$ from the boundary of $A_{n-1}$ beginning with $A_0=A'_0=\emptyset$.

\subsubsection{Theorem \ref{thm:Richards}}

In the proof of Theorem \ref{thm:Richards}, Richards regarded the given triple $(X,Y,Z)$
as a triple of subsets of the Cantor set using Proposition \ref{prop:Subset_Cantor} and
constructed a separable surface whose ideal boundary is identified with the given triple.
The main method of his construction is to embed $X$ into the 2-sphere $S^2$ so that
its image points have the form $(x,0) \in [0,1] \times \{0\} \subset \R^2$ with $x\in [0,1]$ by
regarding $S^2$ as the one point compactification of $\mathbb{R}^2$.
Under this construction, the triadic expansion of the real number $x$ does not involve digit 1.

\begin{defn}[$\cD'$ and $\cD$]
\begin{enumerate}[(1)]
\item Consider the collection $\cD'$ consisting of all closed disks $D$ in the $xy$ plane whose diameters are
given by the intervals contained in the $x$ axis
\be\label{eq:interval-nm}
\left[(n-\frac{1}{3})/3^m,(n+\frac{4}{3})/3^m \right], \quad \text{for }\, 0 \leq n < 3^m
\ee
where $n$ is an integer which admits a triadic expansion free from $1$'s.
\item
Let $\cD$ be the sub-collection defined by
\be\label{eq:CD}
\cD: = \{ D \in \cD' \mid D \cap X \neq \emptyset\},
\ee
i.e., those consisting of all disks in $\cD'$ containing at least one point of $X$.
\end{enumerate}
\end{defn}

Then $\cD$ determines a basis of the topology of $X$. The lattice, under the inclusion,
of sets in the collection $\cD$ has the following properties, which we shall use below:
\begin{enumerate}
\item $\cD$ is nested, i.e., any two disks in $\cD$ are either disjoint or one contains the other.
\item The intersection of the disks in any infinite linear chain of discs in $D$ consists of exactly one point of $X$.
\end{enumerate}
The latter holds because of the following reasons:
\begin{enumerate}[{(a)}]
\item Any infinite set of nested discs containing a point of $X$ contains a strictly monotone sequence $D_\ell$
under the inclusion order.
\item The diameters of the intersections $D_\ell \cap X$ are intervals of the form \eqref{eq:interval-nm} for each $\ell$.
We also observe that the diameter of $D_\ell$ converges to 0 as $\ell \to \infty$ for any infinite sequence $D_\ell$.
\item $X$ is compact.
\end{enumerate}
Combining the above, we derive that $D_\ell \cap X \neq \emptyset$ and
$D_\ell \cap X \supset D_{\ell +1} \cap X$ and $\text{\rm diam}\, D_\ell \to 0$ as $\ell \to \infty$.
This implies that
$$
\bigcap_{\ell = 1}^\infty D_\ell \cap X
$$
is a nonempty subset of $X$ whose diameter is zero. This concludes Statement (2) above.

Let $\H^+$ and $\H^-$ be the half planes $y>0$ and $y<0$ respectively.

\begin{defn}[$D'$ and $D''$]\label{defn:D'D''}
For each disc $D$ in $\cD$,
we define $D'$ and $D''$ to be the two disjoint largest discs in $\cD'$ properly contained in $D$.
Note that at least one of $D'$ or $D''$ is in $\cD$.
\end{defn}

For every disc $D$ in $\cD$, we choose two circles $C^+(D)$ and $C^-(D)$, each contained in the interior of $D$, such that:
\begin{itemize}
\item $C^+(D)\subset \H^+$ and $C^-(D)\subset \H^-$.
\item $C^+(D)$ and $C^-(D)$ intersect neither $D'$ nor $D''$.
\item $C^+(D)$ and $C^-(D)$ are symmetric with respect to the $x$ axis.
\end{itemize}

Then no two distinct circles $C^\pm(D)$ intersect.

We now construct $M$ as the ``double" of a compact surface with boundary, which is
$S^2$ with the points in $X$ and the interiors of some of the circles $C^\pm (D)$ removed.
First we fill in the circles $C^\pm (D)$
for all $D\in \cD$ satisfying $D\cap Y= \emptyset$, and remove the interiors of $C^\pm (D)$ for all $D\in \cD$ for which $D \cap Y \neq \emptyset$. If $D \cap Y \neq \emptyset$ but $D\cap Z = \emptyset$, then we identify the boundaries of $C^+(D)$ and $C^-(D)$ by reflecting $C^+(D)$ in the $x$ axis(preserving orientation in $M$). If $D\cap Z \neq \emptyset$, then we fill in $C^-(D)$ and glue boundary of $C^+(D)$ with itself and make a cross-cap.

We shall show that the ideal boundary of $M$ is equivalent to the triple $(X,Y,Z)$. It follows from Properties (1) and (2) above
that any point in $X$ can be uniquely represented as the intersection of the sets in a maximal ordered chain in the lattice of
sets in $\cD$. From the definition of an end, this gives us an end of $M$. This defines a mapping from $X$ into
the ideal boundary $B(M)$ (Definition \ref{defn:ideal_bdy}).

Since $X \setminus Y$ and $X \setminus Z$ are open subsets of $X$, every point $p$ in $X \setminus Y$ is
contained in some disk $D\in \cD$ such that $D \cap Y =\emptyset$, and similarly for every $p\in X \setminus Z$.
Hence the subsets $Y$ and $Z$ of $X$ correspond precisely to the maximal chains which represent
nonplanar and nonorientable boundary components of $M$.

After that, we check that this map is a surjective homeomorphism from $X$ into $B(M)$ and this concludes Theorem \ref{thm:Richards}.

\subsubsection{Theorem \ref{thm:Repn_surface}}

Now, given a surface $M$, we can construct a surface $M'$ whose ideal boundary triple is $(B(M),B'(M),B''(M))$
by taking a ``double" of a modified sphere. According to Theorem \ref{thm:Kerek}, it is sufficient to
consider possible variations in the genus and orientability class of $M$ and $M'$.
If either of these surfaces has infinite genus or infinite degree of nonorientability, then the ideal boundaries
contain nonplanar or nonorientable end and the assumption that these invariants be the same is redundant.
Since it is possible to vary the genus or degree of nonorientability in the finite case
by adding or subtracting a finite number of ``handles" or ``cross caps", we have Theorem \ref{thm:Repn_surface}.
When adding such ``handles" or ``cross caps" to $M'$, choose disjoint disks located out of the largest disk,
say $D_0$, in $\cD'$ whose diameter is $[-\frac{1}{3},\frac{4}{3}]$ and identify them as needed.
This gives rise to Theorem \ref{thm:Repn_surface}.

\begin{defn}Let $M'$ be a surface constructed as above.
$M' \setminus \text{\rm Int}(D_0)$ in $M'$ is compact which we call a \emph{compact part} of $M'$.
\end{defn}

\begin{rmk} Note that a cylindrical end corresponds to an isolated point in the ideal boundary.
Therefore if a Riemann surface has an ideal boundary which is a perfect set,
it cannot have any cylindrical end.
Furthermore every surface can have at most countably many cylindrical ends.
\end{rmk}

\subsection{Equivalence classes of end structures and its building
blocks}\label{subsec:eq_class_end_structure}

In the last subsection, we introduced an ideal boundary of a separable surface.
By Theorem \ref{thm:Kerek}, two separable surfaces of the same genus and
orientability class are homeomorphic to each other if and only if
their ideal boundaries considered as triples of spaces are topologically equivalent.
Therefore, understanding topological structure of the ideal boundary will help us
to deal with the end structures and the homeomorphism classes of separable surfaces.
In this subsection, we will provide the description of  a basis of the topology of
the ideal boundary whose description is now in order.

Let $\mathcal{C}$ be the Cantor set equipped with a subspace topology of $\mathbb{R}$.
Recall that the Cantor set is a compact totally disconnected Hausdorff space.
Its topology can be described by the following basis.
\begin{defn}[The standard basis of the Cantor set] We equip $\mathcal{C}$ with
a basis of a topology given by the set of intervals
$$
\mathcal{I}= \{[0,1],[0,1/3],[2/3,1],\dots \}.
$$
\end{defn}
The $\mathcal{I}$ corresponds to $\cD'$ in the last subsection such that a disk $D$ whose diameter is $[(n-\frac{1}{3})/3^m,(n+\frac{4}{3})/3^m]$ corresponds to $[n/3^m,(n+1)/3^m]$.

The following is easy to verify whose proof is omitted.

\begin{lemma} Then $\mathcal I$ has the following properties:
\begin{enumerate}
\item A pair of elements of $\CI$  is either disjoint or nested.
\item For every point $p\in \mathcal{C}$, there exists
a decreasing sequence $I_1,\,I_2,\,\dots$ of elements of $\mathcal{I}$ containing $p$ such that
$I_i\supset I_{i+1}$ for all $i$ and$\{p\}=\bigcap_{i=1}^{\infty} I_i$.
\item $\mathcal{I}$ is a POSET under the inclusion relation by setting
$I < I'$ to be $I \subset I'$.
\item
 For each $m = 0,\,1,\, \ldots $, the sub-collection $\mathcal{I}^{(m)} \subset \mathcal{I}$ defined by
\be\label{eq:collection-k}
\left\{\left[\frac{n}{3^m}, \frac{(n+1)}{3^m}\right] \, \Big| \, 0 \leq n < 3^m, \, \left\lfloor \frac{n}{3^k} \right\rfloor
 \not\equiv 1\mod 3\, \forall 0\leq k<m  \right\}
\ee
is also an open cover of $\mathcal{C}$ whose elements are disjoint from one another.
This sub-collection will be used to construct a rooted binary graph in Section \ref{sec:symplecto-classification},
for which the index $m$ will be the distance from the root.
\end{enumerate}
\end{lemma}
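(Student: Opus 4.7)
My plan is to encode the combinatorics of $\mathcal{I}$ via ternary expansions. Every element of $\mathcal{I}$ (other than $[0,1]$) has the form $I_{m,n}:=[n/3^m,(n+1)/3^m]$ where the base-$3$ digits of $n$ (viewed as an $m$-digit string, possibly padded by leading zeroes) avoid $1$, equivalently $\lfloor n/3^k\rfloor \not\equiv 1 \pmod 3$ for all $0\le k<m$. The ``address'' of $I_{m,n}$ is then the digit string $a_1\cdots a_m\in\{0,2\}^m$ corresponding to $n/3^m=\sum_{i=1}^m a_i/3^i$. Two such intervals are compared by comparing their addresses as strings, and a point $p\in\mathcal{C}$ is compared to $I_{m,n}$ by looking at the first $m$ digits of the (unique, up to the standard ambiguity avoided by this choice) base-$3$ expansion of $p$ using only the digits $0$ and $2$.

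For (1), I would argue by symmetry, assuming $m\le m'$ for $I_{m,n}$ and $I_{m',n'}$. If the address $a'_1\cdots a'_{m'}$ of $I_{m',n'}$ has prefix equal to $a_1\cdots a_m$, then $I_{m',n'}\subseteq I_{m,n}$ directly from the explicit form of the endpoints. If not, the two addresses differ at some position $k\le m$; the closures of the corresponding length-$3^{-k}$ triadic intervals meet at most at an endpoint of the form $j/3^{k}$, and such a point is only a common endpoint when the two intervals are adjacent. A short case check rules out adjacency for intervals in $\mathcal{I}$ because of the forbidden digit~$1$: any two length-$3^{-m}$ intervals with digit addresses in $\{0,2\}^m$ that share an endpoint must in fact come from strings differing in the last digit position they agree upon, and the intermediate interval (with digit $1$) separates them. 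This yields disjointness, hence (1). Assertion (3) is then an immediate consequence, since $\mathcal{I}$ is a subfamily of the power set ordered by inclusion.

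For (2), given $p\in\mathcal{C}$ I write its base-$3$ expansion $p=\sum_{i\ge 1}a_i/3^i$ with $a_i\in\{0,2\}$ (the standard fact about $\mathcal{C}$), and set $I_m:=I_{m,n_m}$ where $n_m=\sum_{i=1}^m a_i 3^{m-i}$. By construction each $I_m\in\mathcal{I}$, $I_m\supset I_{m+1}$ since the address of $I_{m+1}$ extends that of $I_m$, and $p\in I_m$ for all $m$. Since $\operatorname{diam}(I_m)=3^{-m}\to 0$, the nested intersection $\bigcap_m I_m$ is the singleton $\{p\}$.

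For (4), the two properties are essentially dual. Disjointness: any two distinct intervals of $\mathcal{I}^{(m)}$ have distinct length-$m$ addresses in $\{0,2\}^m$, so the case analysis of (1) shows their interiors are disjoint; moreover an equality of endpoints between two such intervals would require a shared endpoint of the form $j/3^m$ with two incompatible continuations, which would force a digit $1$ somewhere in a length-$m$ address and violate the defining condition of $\mathcal{I}^{(m)}$. Covering: given $p\in\mathcal{C}$, take the first $m$ digits $a_1\cdots a_m$ of its $\{0,2\}$-expansion; then $p\in I_{m,n}$ where $n$ has those digits, and the hypothesis $\lfloor n/3^k\rfloor\not\equiv 1\pmod 3$ is automatic because all digits lie in $\{0,2\}$. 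The step I expect to require the most care is the case analysis in (1) distinguishing ``nested'' from ``adjacent,'' since it is precisely here that the no-$1$-digit restriction in the definition of $\mathcal{I}$ plays a substantive role; once that is set up cleanly, (2)--(4) follow mechanically.
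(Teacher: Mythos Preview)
Your proof is correct. The paper itself omits the proof entirely, stating only that the lemma ``is easy to verify,'' so there is no approach to compare against; your encoding of intervals by their base-$3$ addresses in $\{0,2\}^m$ is the natural way to make the verification explicit, and your observation that the forbidden digit $1$ is exactly what prevents two intervals in $\mathcal{I}$ from being adjacent (hence forcing ``disjoint or nested'') is the key point.
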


Given a pair of intervals $[a,b], \, [c,d]\in  \mathcal I$, their relationship is one of the following:
\begin{itemize}
\item They are disjoint.
\item They are the same.
\item One is a proper subset of the other.
\end{itemize}

Moreover, in the last case, we have additional information.

\begin{defn}
Let $i_\mathcal{C}:\{([a,b],[c,d])\in \mathcal{I}\times \mathcal{I}|[a,b]\subsetneq [c,d]\}\rightarrow \{0,2\}$ be defined as
\begin{equation}
i_\mathcal{C}([a,b],[c,d])=
\begin{cases}
0 & $if $c\leq a,b \leq c+\frac{d-c}{3}\\
2 & $if $c+\frac{2(d-c)}{3}\leq a, b\leq d\\
\end{cases}
\end{equation}
\end{defn}

\begin{rmk}
$i_\mathcal{C}([a,b],[c,d])$ is related to ternary representation.
Pick $p\in \CC$.
Then we have a maximal chain $\{I_i\}_{i\in \ZZ_{>0}}\subset \mathcal I$ such that
$I_1 \supsetneq I_2 \cdots$ and $\bigcap_{i=1}^{\infty}I_i=\{p\}$.
Then ternary representation of $p$ is $0.k_1k_2\ldots_{(3)}$ where
$i_\mathcal{C}(I_i,I_{i+1})=k_i$
\end{rmk}

Let $M$ be a separable surface and let $(B(M),B'(M),B''(M))$ be its ideal boundary triple as defined in
Definition \ref{defn:ideal-bdy-triple}.
\begin{defn}[Basis $\CS$ of topology of $B(M)$]\label{defn:basis}
We consider the following sub-collection of $\cI$
\be\label{eq:cS}
\cS:=\{I\in\mathcal{I} \mid I\cap B(M) \neq \emptyset\}.
\ee
Then the collection of open subsets of $B(M)$, still denoted by $\cS$,
\be
\cS = \{I\cap B(M) \mid I \in \cS\}
\ee
forms a basis of the subspace topology of $B(M) \subset [0,1]$.
\end{defn}

We now provide an explicit construction of the surface $M$ with given triple
$(B(M),B'(M),B''(M))$ by utilizing the proof of Theorem \ref{thm:Richards}:
\begin{enumerate}[(a)]
\item We pick $D_0$ in $\cD$ during the proof of Theorem \ref{thm:Repn_surface}.
\item If $D_0\cap B'(M)=\emptyset$, we fill in the circles $C^\pm (D_0)$.
\item If $D \cap Y \neq \emptyset$ but $D\cap Z = \emptyset$,
 we identify the boundaries of $C^+(D)$ and $C^-(D)$
by reflecting $C^+(D)$ in the $x$ axis(preserving orientation in $M$).
This results in a genus contained in the complement of $\text{\rm Int}(D_0')\cup \text{\rm Int}(D_0'')$ in $D_0$.
\item If $D\cap Z \neq \emptyset$, we fill in $C^-(D)$ and glue the boundary of $C^+(D)$
with itself and make a cross-cap.
\end{enumerate}
Moreover, $D_0'$ and $D_0''$ are disjoint disks contained in $D_0$ and at least one of those two is contained in $\cD$. (See Definition \ref{defn:D'D''}.)
Therefore, we have derived that
$$
D_0 \setminus \left(\text{\rm Int}(D_0')\cup \text{\rm Int}(D_0'')\right)
$$
is homeomorphic to one of the following domains:

\begin{defn}[Building blocks]\label{defn:building-blocks}
Considers the following domains of
\begin{enumerate}
\item a pair of pants,
$$
D_0\cap B'(M)=\emptyset, \quad D_0',D_0''\in \cD,
$$
\item a pair of pants with a cross cap,
$$
D_0\cap B''(M)\neq \emptyset, \quad D_0',D_0''\in \cD,
$$
\item a pair of pants with a genus,
$$
D_0\cap B''(M)=\emptyset, \quad D_0\cap B'(M)\neq \emptyset, \quad D_0',D_0''\in \cD,
$$
\item a cylinder,
$$
D_0\cap B'(M)=\emptyset, \quad D_0'\not\in \cD \, \text{\rm or },\,   D_0''\not\in \cD,
$$
\item a cylinder with a cross cap,
$$
D_0\cap B''(M)\neq\emptyset, \quad D_0'\not\in \cD, \text{\rm or }\, D_0''\not\in \cD.
$$
\item a cylinder with a genus,
$$
D_0\cap B''(M)=\emptyset, \quad D_0\cap B'(M)\neq \emptyset, \quad D_0'\not\in \cD,
\text{\rm or }, \,  D_0''\not\in \cD.
$$
\end{enumerate}
\end{defn}
We can construct our intermediate compact surface with boundary
by taking  the union of the given base compact domain and
a collection of the above building blocks attached thereto.

\begin{defn}[Building block $B_I$]\label{defn:building-block}
Let $I \in \cS$ be given.
\begin{enumerate}
\item  we denote the above constructed building block by $B_I$ and
ay that \emph{a building block is attached to $I$.}
\item For given $J\subset I$, we write
$$
I_*: =\bigcup_{J\in \cS,J\subset I} B_I.
$$
Note that ${{I}_*}^*=I\cap B(M)$.
\end{enumerate}
\end{defn}

In other words, we construct the desired surface
by repeatedly attaching building blocks to a compact domain along boundary
at each step of the construction.

To deal with nonplanar or nonorientable ends, we need additional data on $\cS$, which will be encoded by
the following map $\chi_\cS : \cS \rightarrow \{0,1,2\}$
which encodes some structure of the ideal boundary of $M$.

\begin{defn}[Counting genus and cross-cap]
\label{defn:blue-print}
Let $\chi_\cS : \cS \rightarrow \{0,1,2\}$ be defined as
\begin{equation}
\chi_\cS(I)=
\begin{cases}
1 & $if $I\cap B''(M)\neq \emptyset\\
2 & $if $I\cap B''(M)= \emptyset$ and $I\cap B'(M)\neq \emptyset\\
0 & $otherwise$\\
\end{cases}
\end{equation}
$\chi_\cS$ counts the contribution of genus and cross-cap contained in the building block
attached to $I$ to the Euler characteristic of whole surface $M$.
Note that if there is at least one interval $I \in \cS$ such that $\chi_\cS(I)=1$,
then $M$ is nonorientable. We call the pair $(\cS,\chi_\cS)$ the \emph{blueprint pair}
of the surface $M$.
\end{defn}
So far we have constructed a blueprint pair $(\cS,\chi_\cS)$ from
the ideal boundary triple $(B(M),B'(M),B''(M))$.
Conversely, suppose we are given a blueprint pair $(\cS,\chi_\cS)$.
For any maximal chain $\{I_i\}_{i\in \ZZ_{>0}}\subset \mathcal I$ in $\cS$,
we have a point $p\in \cC$ with
$$
\bigcap_{i=1}^{\infty}I_i=\{p\}.
$$
Let $B(M)$ be the set of such points.
\begin{itemize}
\item
If a maximal chain $\{I_i\}_{i\in \ZZ_{>0}}\subset \mathcal I$, which corresponds to a point $p\in B(M)$,
contains infinitely many intervals $I_i$ such that $\chi_\cS(I_i)\neq 0$, we say $p\in B'(M)$.
\item If a maximal chain $\{I_i\}_{i\in \ZZ_{>0}}\subset \mathcal I$, which corresponds to a point
$p\in B(M)$, contains infinitely many $I_i$ such that $\chi_\cS(I_i)=1$, we say $p\in B''(M)$.
\end{itemize}
Therefore there is a correspondence between the pair $(\cS,\chi_\cS)$ and
 the ideal boundary triple.  This completes the description of $M$ as the surface
 obtained by an iterative gluing of building blocks.

 \begin{notation}[$M_\cS$] For given $\cS$,  we denote by $M_\cS$ the above constructed
 surface.
 \end{notation}
 We remark that the topology of the surface $M_\cS$ depends only on $\cS$.
For the simplicity of notation, we also write
$$
B(\cS): = B(M_\cS).
$$

\subsection{Pair-of-pants decomposition of surface}

Additionally, we use the following pair-of-pants decomposition of
$M$ which we encode by the following convenient function $\xi_\cS : \cS \rightarrow \{0,1\}$.

\begin{defn}[Counting pairs of pants]
Define a function $\xi_\cS : \cS \rightarrow \{0,1\}$ by
\begin{equation}
\xi_\cS(I)=
\begin{cases}
1 & $if $i_\mathcal{C}(I_1,I)=0$ and $i_\mathcal{C}(I_2,I)=2$ for some $I_1,I_2\in \cS\\
0 & $otherwise$\\
\end{cases}
\end{equation}
This counts the number of pairs of pants attached to $I$.
If a cylinder, a cylinder with a cross cap, or a cylinder with a genus is attached to $I$,
then $\xi_\cS(I)=0$.
\end{defn}

Using the value of $\chi_\cS(I)$ and $\xi_\cS(I)$, we can encode the
6 building blocks attached to $I$  in terms thereof as follows:
\begin{enumerate}
\item a pair of pants ($\xi_\cS(I)=1$, $\chi_\cS(I)=0$)
\item a pair of pants with a cross cap ($\xi_\cS(I)=1$, $\chi_\cS(I)=1$)
\item a pair of pants with a genus ($\xi_\cS(I)=1$, $\chi_\cS(I)=2$)
\item a cylinder ($\xi_\cS(I)=0$, $\chi_\cS(I)=0$)
\item a cylinder with a cross cap ($\xi_\cS(I)=0$, $\chi_\cS(I)=1$)
\item a cylinder with a genus ($\xi_\cS(I)=0$, $\chi_\cS(I)=2$)
\end{enumerate}

Using the homeomorphism classes of building blocks, we can now define
a homeomorphism between two blueprint pairs.
\begin{defn}
Let $(\cS,\chi_\cS)$ and $(\cS',\chi_{\cS'})$ be blueprint pairs.
We say two blueprint pairs are homeomorphic to each other if there are chains of subsets
$\cS_1\subset \cS_2 \subset \cdots$ of $\cS$, and $\cS'_1\subset \cS'_2 \subset \cdots$ of $\cS'$ respectively such that the following hold:
\begin{itemize}
\item $[0,1]\in \cS_1$, $[0,1]\in \cS'_1$.
\item Each $\cS_i$[resp.$\cS'_i$] corresponds to a connected subgraph of
the binary graph which corresponds to $(\cS,\chi_\cS)$[resp.$(\cS',\chi_\cS')$].
\item $\bigcup_i^\infty \cS_i=\cS$, $\bigcup_i^\infty \cS'_i=\cS'$.
\item Let $M_{\cS_i}$ be a surface obtained from attaching $B_I$ for $I\in \cS_i$ to the compact part of $M_\cS$.
Then $M_{\cS_i}$ is connected and compact.
Define $M_{\cS'_i}$ in the same way.
Then $M_{\cS_i}$ is homeomorphic to $M_{\cS'_i}$ for all $i$, which is equivalent to:

\begin{itemize}
\item Orientability of $M_{\cS_i}$ and $M_{\cS'_i}$ are same for all i.
\item There are $k,k'\in \ZZ_{\geq 0}$ such that for all $i$,
\begin{equation}
k+\sum_{I\in \cS_i} \chi_\cS(I)=k'+\sum_{I\in \cS'_i} \chi_\cS'(I)
\end{equation}
where  [resp.$k'$] is the Euler characteristic contribution of genus and cross-caps
contained in the compact part of $M_\cS$[resp.$M_\cS'$].
\item For all $i$,
\begin{equation}
\sum_{I\in \cS_i} \xi_\cS(I)=\sum_{I\in \cS'_i} \xi_\cS'(I)
\end{equation}
\end{itemize}

\end{itemize}

\end{defn}

This definition follows directly by computing and comparing the numbers of boundary components,
Euler characteristics, and orientability in the proof of Theorem \ref{thm:Kerek}.

\begin{rmk}
Some homeomorphism on a surface may move its ideal boundary but
it cannot change the homeomorphism class of its ideal boundary.
Some examples of such a homeomorphism are swapping of two boundary components of a pair of pants, shifting genus, and others
we refer readers to \cite{Aramayona-Vlamis} for more on the structure of
the mapping class group of  infinite type surfaces.
\end{rmk}

\section{Symplectomorphism classification and standard surfaces}
\label{sec:symplecto-classification}

In this section, we promote Richards' homeomorphism classification result to
that of symplectomorphisms and introduce the class of \emph{standard surfaces}
each of which provide a good geometric model for the Floer theory. This model
should be the replacement of the requirement of cylindrical ends for the
Liouville manifolds \emph{of finite type}.

\subsection{Diffeomorphism classification}\label{subsec:diffeom_classification}
From now on, our surfaces of interest will be a noncompact symplectic surface without boundary.
Therefore, it is orientable and does not contain any cross-cap.

\begin{lemma}\label{lem:diffeo}
Let $M$ be a noncompact symplectic surface without boundary.
Then we can construct another surface $M'$ diffeomorphic to $M$ by repeatedly attaching
one of the following types of surfaces to a disk:
\begin{enumerate}
\item a pair of pants
\item a cylinder with a genus
\item an infinite cylinder
\end{enumerate}
\end{lemma}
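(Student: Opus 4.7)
The plan is to derive this diffeomorphism classification from Richards' theorem (Theorem \ref{thm:Kerek}) and the blueprint construction of Section \ref{subsec:eq_class_end_structure}, then reduce the list of orientable building blocks in Definition \ref{defn:building-blocks} to the three listed in the lemma. Since $M$ is orientable and without boundary, its ideal boundary triple has empty nonorientable part, so the blueprint pair $(\cS,\chi_\cS)$ associated to $M$ takes values in $\{0,2\}$; consequently each block $B_I$ attached during the iterative construction of $M_\cS$ is one of the four orientable types: a pair of pants, a pair of pants with a genus, a (finite) cylinder, or a cylinder with a genus. By Theorem \ref{thm:Kerek} it suffices to build some surface with the same end structure and genus using only the three allowed blocks attached to a disk.

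The first reduction is the elementary diffeomorphism ``pair of pants with a genus'' $\cong$ ``pair of pants'' with a ``cylinder with a genus'' glued to one of its three boundary circles; performing this move at every occurrence eliminates the ``pair of pants with a genus'' block entirely. The second reduction addresses finite cylinders, which are topologically trivial collars. If a finite cylinder block $B_I$ has a further block $B_{I'}$ attached at its outgoing boundary (i.e., $I$ is not minimal in $\cS$ with respect to inclusion), then the outgoing boundary of the cylinder can be identified with the incoming boundary of $B_{I'}$, so the cylinder disappears up to diffeomorphism. If $I$ corresponds to a cylindrical end, equivalently the chain of sub-intervals in $\cS$ below $I$ stabilizes to repeated finite cylinder blocks, then the infinite concatenation of these collars is replaced by a single infinite cylinder.

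It remains to explain why the compact part of $M_\cS$, together with the first few layers of attachments, can be realized starting from a single disk using only the three allowed blocks. The compact part is a compact orientable surface with one boundary circle (that of $D_0$), hence diffeomorphic to a disk with finitely many handles; each handle is produced by gluing a cylinder-with-genus to the disk along one of its two boundary circles, while additional boundary circles required by the remainder of the construction are produced by attaching pairs of pants. After applying these three reductions to the iterative construction of $M_\cS$, one obtains a surface diffeomorphic to $M$ built from a disk by repeatedly attaching pairs of pants, cylinders-with-genus, and infinite cylinders. The main delicate point will be the bookkeeping in the second reduction when a non-cylindrical end is approached by a chain of blocks mixing finite cylinders with cylinders-with-genus; the finite cylinders in such a chain are absorbed into the adjacent genus-carrying block, leaving only cylinders-with-genus, which is consistent with the statement since ``repeatedly attaching'' is allowed to be an infinite process along such an end.
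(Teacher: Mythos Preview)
Your proposal is correct and follows essentially the same route as the paper: both start from Richards' orientable building blocks, replace each pair-of-pants-with-genus by a cylinder-with-genus followed by a pair of pants, absorb or concatenate finite cylinders (into adjacent blocks or into a single infinite cylinder at a cylindrical end), and realize the compact part as a disk with finitely many cylinders-with-genus attached. The one step the paper makes explicit that you leave implicit is the passage from homeomorphism to diffeomorphism, which the paper handles by noting that each attaching circle lies in the interior of the next-stage surface and can therefore be smoothed and glued along collar neighborhoods.
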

\begin{proof}
In Section \ref{subsec:end_structure}, we constructed a surface by taking a ``double" of a modified sphere
and adding finitely many handles or cross-caps.
Therefore, up to homeomorphism, we assume $M$ is such a surface.
Since our surface is orientable, $B''(M)=\emptyset$ and we do not need to consider cross caps.
By definition, the compact part of $M$ is constructed by identifying disks contained in the complement of
$D_0 \in \cD$ in the sphere in pair and making genus.
There are finitely many genus in the \emph{compact part} which is homeomorphic to a surface (with boundary)
constructed by attaching cylinders with genus along the boundary of a disk.
Denote this surface by $M'_0$.
We now have a homeomorphism from $M'_0$ onto the compact part of $M$ and need to extend it into
the interior of elements of $\cD$.

In the proof of Theorem \ref{thm:Richards}, if $D_0\cap B'(M)=\emptyset$, we fill in the circles
$C^\pm (D_0)$. Otherwise, we remove the interiors of $C^\pm (D_0)$ and identify the boundaries
of $C^+(D_0)$ and $C^-(D_0)$ by reflecting $C^+(D)$ in the $x$ axis
(preserving the orientation in $M$). This results in a genus contained in the complement of $\text{\rm Int}(D_0')\cup \text{\rm Int}(D_0'')$ in $D_0$. Also, $D_0'$ and $D_0''$ are disjoint disks contained in $D_0$ and at least one of those two is contained in $\cD$. Therefore, the
complement
$$
D_0 \setminus \left(\text{\rm Int}(D_0')\cup \text{\rm Int}(D_0'')\right)
$$
is homeomorphic to one of the following building blocks:
\begin{enumerate}
\item a pair of pants ($D_0\cap B'(M)=\emptyset$, $D_0',D_0''\in \cD$)
\item a pair of pants with a genus ($D_0\cap B'(M)\neq \emptyset$, $D_0',D_0''\in \cD$)
\item a cylinder ($D_0\cap B'(M)=\emptyset$, $D_0'\not\in \cD$ or $D_0''\not\in \cD$)
\item a cylinder with a genus ($D_0\cap B'(M)\neq\emptyset$, $D_0'\not\in \cD$ or $D_0''\not\in \cD$)
\end{enumerate}

For the cases of (1), (3) and (4), we attach a pair of pants, a cylinder and a cylinder with a genus
respectively  to a boundary of $M'_0$ and denote the resulting surface by $M'_1$.

For the case of (2), we attach a cylinder with a genus to a boundary of $M'_0$ and attach a pair of pants to the resulting surface and call the resulting surface $M'_1$.
After that, extend the homeomorphism between $M'_0$ and the compact part of $M$ to the homeomorphism between $M'_1$ and the complement of $\text{\rm Int}(D_0')\cup \text{\rm Int}(D_0'')$.
As we did in the proof of Theorem \ref{thm:Kerek}, we do the same for other disks in $\cD$
and can get a homeomorphism between $M$ and $\cup_{i=1}^{\infty} M'_i$.

For the case of (3), if we attach finitely many cylinders consequently,
 it does not affect the homeomorphism-type and we can skip it.
 Otherwise, the attaching operation  is the same as attaching an infinite cylinder.
 Therefore, we can construct a surface which is homeomorphic to $M$
 by repeatedly attaching one of the following types of surfaces to a disk:
\begin{enumerate}
\item a pair of pants
\item a cylinder with a genus
\item an infinite cylinder
\end{enumerate}

By the construction of each homeomorphism, the attaching boundary of every step is contained in the interior of the resulting compact bordered surface. So we may assume the attaching boundary is smooth
and so take a collar neighborhood of each boundary component in every attaching step. Furthermore,
we can replace the operation of attaching boundaries by that of gluing along collar neighborhoods.
Therefore, we have improved the homeomorphism classification by a diffeomorphism classification.
\end{proof}

In the Section \ref{subsec:standard-surface} for the symplectomorphism classification,
we will equip each building block with an area form(equivalently a symplectic form).

\begin{defn}[Pointed surface] Let $p_0$ be a given point of $M$.
We call such a pair $(M,\{p_0\})$ a \emph{pointed surface} and $p_0$
the \emph{base point}.
\end{defn}

If $M$ is a surface that is constructed by an iterative attaching of building blocks to a disk,
we take the center of the disk as the \emph{base point} of the surface $M$.
We use this base point to define a plurisubharmonic function in Section \ref{subsec:standard-surface}.
The point will be used as the ``starting point" of our homeomorphism.

Now we can associate a rooted binary graph to each homeomorphism type of $M$
once we pick a base point and get a basis $\cS$ of the topology of $M$. (See Definition \ref{defn:basis}
for the definition of $\cS$.) However, the associated graph will not be cycle-free
and so not suitable to apply a transitive reduction.

 \footnote{In graph theory, a transitive reduction is a
reduction process of a graph by removing some edges from the set of edges between two given vertices
with keeping them still incident to each other.}
Given a subset $\cS \subset \cJ$ which provides a basis of the ideal boundary $B(M)$ of
the given surface $M$, we consider a graph the set of whose vertices is described as above.

But we add one more requirement for each edge between two vertices correspond to $I,\,J$; we require
$I\in \mathcal{I}^{(m+1)}$ and $J\in\mathcal{I}^{(m)}$ for some $m\in \ZZ_{>0}$.
To show that this graph is a binary graph, we recall several properties of the basis $\cS$:

\begin{defn}[Binary graph of $\CS$]\label{defn:binary-tree}
\begin{enumerate}
\item $[0,1]\in \cS$ for every $\cS$ and $I\subset [0,1]$ for all.
\item If $I\subsetneq J$, $J\in \mathcal{I}^{(m)}$ and $I\in \mathcal{I}^{(m+k)}$ for some $k\in \ZZ_{>0}$, there exists a unique maximal chain $I_0=I\subsetneq I_1\subsetneq \cdots \subsetneq I_k=J$ so that $I_i\in \mathcal{I}^{(m+k-i)}$ for all $i$.
\item For every $J\in \cS$, there are at most two $I_1,\,I_2\in \cS$ such that
$I_1,\,I_2$ are maximal proper subsets of $J$.
Moreover, $\{i_\mathcal{C}(I_1,J),\,i_\mathcal{C}(I_2,J)\}=\{0,\,2\}$.
\end{enumerate}
\end{defn}

By Property (1), every graph has a vertex which corresponds to $[0,1]$ and it is the root of the graph.
By Property (2), this graph is constructed by a transitive reduction starting from the root vertex:
If a vertex in the graph is given, there is a unique path from the given vertex to the root and
this graph is a rooted tree.
By Property (3), the root vertex has valence 1 or 2 and every other vertex has valence either 2 or 3, so this graph is a rooted binary graph.

Moreover, we may label children of each vertex as follows.
If a vertex corresponding to $I$ is a child of a vertex corresponding to $J$ and $i_\mathcal{C}(I,J)=0$,
we call it a \emph{left child} of $J$.
Otherwise, we call it a \emph{right child}.
Figures in this paper will  be drawn according to this naming of children.

This binary graph corresponding to $\cS$ visualizes the topological structure of $B(M)$, but it lacks
the data of $B'(M)$ since it does not distinguish a cylinder with a genus from a cylinder without a genus.
To encode the data thereof for every $I\in \cS$ with $\xi_\cS(I)=0$ and $\chi_\cS(I)=2$,
we replace a vertex  corresponding to $I$ to a cycle with length two (See Figure \ref{fig:surface_and_corresp_tree}).
By Lemma \ref{lem:diffeo}, we may assume that there is no $I\in \cS$ with $\xi_\cS(I)=1$ and $\chi_\cS(I)=2$ up to diffeomorphism.
The resulting graph may not be cycle-free, but it encodes the datum of a pair of pants decomposition of $M$ and so the homeomorphism type of $M$ as follows:
A vertex with valence 2 corresponds to a cylinder and a vertex with valence 3 corresponds to a pair of pants.
Since a cylinder with a genus is generated by attaching two pairs of pants along boundaries of leg openings, we may also name each cylinder around a genus as the left or the right.

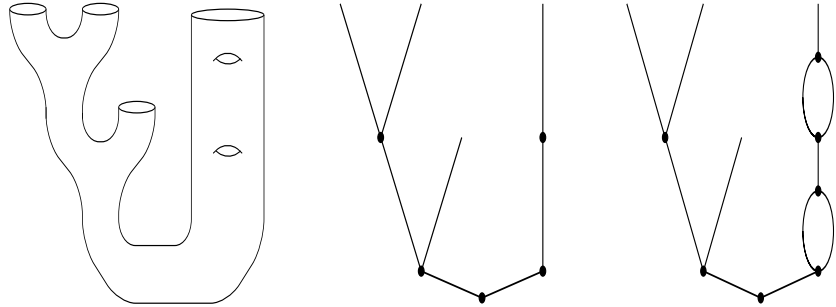
\begin{figure}\center

    \subfloat{
    \resizebox{3.5cm}{4cm}{
\begin{tikzpicture}

\draw[] (2,2.5) arc (0:180:0.5 and 0.1);
\draw[] (2,2.5) arc (0:-180:0.5 and 0.1);
\draw[] (0,2.5) arc (0:180:0.5 and 0.1);
\draw[] (0,2.5) arc (0:-180:0.5 and 0.1);

\draw[] (6,2.4) arc (0:180:1 and 0.1);
\draw[] (6,2.4) arc (0:-180:1 and 0.1);

\draw[rounded corners=13pt] (1,2.5)-- ++(0,-0.5)--++(-1,0)--++(0,0.5);

\draw[rounded corners=13pt] (1,0.6)-- ++(0,0.6)--++(1,0.8)--++(0,0.5);
\draw[rounded corners=13pt] (0,0.6)-- ++(0,0.6)--++(-1,0.8)--++(0,0.5);

\draw[rounded corners=13pt] (2,0.8)-- ++(0,-0.6)--++(-1,0)--++(0,0.5);

\draw[] (3,0.8) arc (0:180:0.5 and 0.1);
\draw[] (3,0.8) arc (0:-180:0.5 and 0.1);

\draw[rounded corners=13pt] (3,0.8)-- ++(0,-0.6)--++(-1,-0.8)--++(0,-1)--++(2,0)--++(0,4);

\draw[rounded corners=13pt] (0,0.8)-- ++(0,-0.6)--++(1,-0.8)--++(0,-1)--++(1,-1)--++(3,0)--++(1,1)--++(0,4);

\begin{scope}[scale=0.8]
\draw[rounded corners=10pt] (5.75,0)-- +(0.5,0.3)-- +(1,0);
\draw[rounded corners=8pt] (5.85,0.05)-- +(0.4,-.2)-- +(0.85,0);
\end{scope}

\begin{scope}[scale=0.8]
\draw[rounded corners=10pt] (5.75,2)-- +(0.5,0.3)-- +(1,0);
\draw[rounded corners=8pt] (5.85,2.05)-- +(0.4,-.2)-- +(0.85,0);
\end{scope}

\end{tikzpicture}
}}
    \qquad
    \subfloat{
    \resizebox{3cm}{4cm}{

\begin{tikzpicture}

\draw[fill=black] (0,0) circle (1pt);

\draw (0,0) -- +(0.75,-0.2) -- +(1.5,0);

\draw[fill=black] (1.5,0) circle (1pt);

\draw (0,0) -- +(-0.5,1);

\draw (0,0) -- +(0.5,1);

\draw[fill=black] (-0.5,1) circle (1pt);

\draw (-0.5,1) -- +(-0.5,1);
\draw (-0.5,1) -- +(0.5,1);

\draw[] (1.5,0)--+(0,1);

\draw[fill=black] (1.5,1) circle (1pt);

\draw[] (1.5,1)--+(0,1);

\draw[fill=black] (0.75,-0.2) circle (1pt);

\end{tikzpicture}

}}
    \qquad
    \subfloat{
    \resizebox{3cm}{4cm}{

\begin{tikzpicture}

\draw[fill=black] (0,0) circle (1pt);

\draw (0,0) -- +(0.75,-0.2) -- +(1.5,0);

\draw[fill=black] (1.5,0) circle (1pt);

\draw (0,0) -- +(-0.5,1);
\draw (0,0) -- +(0.5,1);

\draw[fill=black] (-0.5,1) circle (1pt);

\draw (-0.5,1) -- +(-0.5,1);
\draw (-0.5,1) -- +(0.5,1);

\draw[fill=black] (1.5,0.6) circle (1pt);

\draw[] (1.5,0) arc (-90:-540:0.2 and 0.3);

\draw (1.5,0.6) -- +(0,0.4);

\draw[fill=black] (1.5,1) circle (1pt);

\draw[] (1.5,1) arc (-90:-540:0.2 and 0.3);

\draw (1.5,1.6) -- +(0,0.4);

\draw[fill=black] (1.5,1.6) circle (1pt);
\draw[fill=black] (0.75,-0.2) circle (1pt);

\end{tikzpicture}

}}

\caption {A surface, corresponding rooted binary graph and a pair of pants decomposition graph}
\label{fig:surface_and_corresp_tree}

\end{figure}

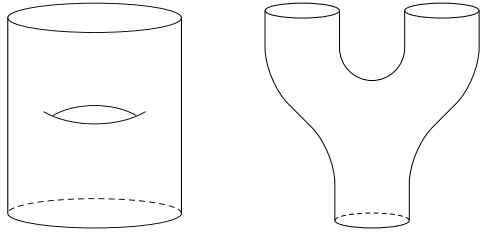
\begin{figure}

    \centering
    \subfloat{{
    \resizebox{2.5cm}{3cm}{
\begin{tikzpicture}

\draw (-1.5,-2) -- (-1.5,2);

\draw (1.5,-2) -- (1.5,2);

\begin{scope}[scale=0.8]
\path[rounded corners=24pt] (-.9,0)-- +(0.9,.6)-- +(1.8,0) +(0,0)-- +(0,0.34)--    +(1.8,0);
\draw[rounded corners=28pt] (-1.1,.1)-- +(1.1,-0.7)-- +(2.2,0);
\draw[rounded corners=24pt] (-.9,0)-- +(0.9,.6)-- +(1.8,0);
\end{scope}

\draw[] (0,1.7) arc (270:90:1.5 and 0.3);
\draw[] (0,1.7) arc (-90:90:1.5 and 0.3);

\draw[densely dashed] (1.5,-2) arc (0:180:1.5 and 0.3);
\draw[] (1.5,-2) arc (0:-180:1.5 and 0.3);
\end{tikzpicture}
    }
    }}
    \qquad
    \subfloat{{

    \resizebox{3cm}{3cm}{
\begin{tikzpicture}

\draw[] (2.5,2.5) arc (0:180:0.8 and 0.16);
\draw[] (2.5,2.5) arc (0:-180:0.8 and 0.16);
\draw[] (-0.5,2.5) arc (0:180:0.8 and 0.16);
\draw[] (-0.5,2.5) arc (0:-180:0.8 and 0.16);

\draw[rounded corners=20pt] (0.9,2.5)-- ++(0,-1.5)--++(-1.4,0)--++(0,1.5);
\draw[rounded corners=20pt] (1,-2)-- ++(0,1.5)--++(1.5,1.5)--++(0,1.5);
\draw[rounded corners=20pt] (-0.6,-2)-- ++(0,1.5)--++(-1.5,1.5)--++(0,1.5);

\draw[densely dashed] (1,-2) arc (0:180:0.8 and 0.16);
\draw[] (1,-2) arc (0:-180:0.8 and 0.16);
\end{tikzpicture}
    }
    }}
    \caption{Building blocks}
\end{figure}

\subsection{Standard surface and its building blocks}
\label{subsec:standard-surface}

By the classification results from the previous section, especially for the orientable surfaces, there
is one-to-one correspondence between the topological types of orientable surfaces and the set of \emph{trivalent planar graphs}: Each orientable surface is obtained by replacing each edge of the graph by a
cylinder $C$ and replacing a neighborhood of each vertex by a pants constructed as above.

In this subsection, we represent the graph constructed above in the first construction
as the \emph{Reeb graph} of a proper Morse function. We first recall the definition of
Reeb graph \cite{reeb:graph}.

\begin{defn}[Reeb graph]\label{defn:reeb-graph} Given a topological space $X$ and a continuous function
$f: X \to \R$, consider the equivalence relation on $X$ where $p \sim q$ if $p$ and $q$
belong to the same connected component of a single level set $f^{-1}(c)$ for some real
number $c$. The \emph{Reeb graph} of $(X,f)$ is the quotient space $X/\sim$
endowed with the quotient topology. We denote  by $G_{(X,f)}$
the Reeb graph associated to $(X,f)$.
\end{defn}

We now consider a proper Morse function $f=\psi: M \to \R$ with $\psi \geq 0$
satisfying
$$
\psi^{-1}(0) = \{p_{\text{\rm rt}}\}.
$$
We have a natural fibration
$$
\pi_{(M,\psi)}: M \to G_{(M,\psi)}
$$
whose preimages are either a union of finite number of circles which
corresponds to a regular value of $f$ or a figure 8 which corresponds to
a critical level other than the $\psi = 0$ which corresponds to the base point.
We can adjust the given good Morse function so that its critical levels
lie at some integer $2k \geq 0$.

We then parameterize each connected component of $\psi^{-1}([2k-1/2, 2k + 1/2])$
by a finite number of unit cylinders $ [0,1] \times S^1$ equipped with
the cylindrical coordinates $(s,t)$ with $0\leq s\leq 1$, $0\leq t \leq 1 (\mod 1)$.
We parameterize the semi-infinite cylinder by $(s,t)$ with $s \geq 0$ and $t \in S^1$.

Equip these cylinders with the standard flat metric  $g = ds^2 + dt^2$
and with the standard (almost) complex structure $J$ satisfying
$$
J\left({\frac{\partial}{\partial s}}\right)={\frac{\partial}{\partial t}},
J\left({\frac{\partial}{\partial t}}\right)=-{\frac{\partial }{\partial s}}
$$
Then the standard symplectic form $ds\wedge dt$ is $J$-compatible.

On the other hand, the preimage $\psi^{-1}([2k-1/2, 2k+1/2])$
is a union of a finite number of pants each of which contains a unique
critical point at the level $2k$. We conformally identify each such component with
the standard pair of pants consisting of the union of three
cylinders $C_i$ of height 1 constructed in the following way:
we glue three cylinders $C_1,C_2,C_3$ with the aforementioned flat metric by
parameterizing each $\partial C_i$  as $\{(0,t)|0\leq t<1\}$.
We glue $\{(0,t) \mid 0< t < \frac{1}{2}\}$ of $C_i$ to
$\{(0,t) \mid \frac{1}{2}< t < 1\}$ of $C_{i+1}$,
and $\{(0,t) \mid \frac{1}{2}< t <1\}$ to $\{(0,1) \mid 0< t < \frac{1}{2}\}$ of $C_{i-1}$,
where index is written in mod $3$. Then
the conformal structure is realized by the glued flat metric. The metric
is singular only at two points $p, \, \overline p \in \Sigma$ which
lie on the boundary circles of $\Sigma_i$. Therefore the conformal
structure induced from the metric naturally extends
over the two points $p, \overline p$ by unique continuation.
One important property of this {\it singular} metric is the flatness
everywhere except at the two points $p, \overline
p$ where the metric is singular but Lipschitz.

We recall from Lemma \ref{lem:diffeo} that each separable orientable surface $M$
can be also obtained by iteratively gluing the following building blocks:
\begin{enumerate}
\item a pair of pants,
\item a cylinder with a genus,
\item an infinite cylinder.
\end{enumerate}

We take an area form on each building block $C_i$
so that near each boundary component it has the form
$$
\omega = \pi^*(ds \wedge dt)
$$
which can be smoothly glued to one another except at the two singular points
$p, \, \overline p$:
we arbitrarily smoothen the form near the singularities.

For a cylinder with a genus, we glue two of pairs of pants.

For an infinite cylinder in $(s,t) \in \RR_{\geq 0} \times S^1$,
we take the form $\omega=ds \wedge dt$.
We glue these building blocks and define a global area form
which we denote by
$$
\omega
$$
regarded as a symplectic form. We provide the following formal definition.

\begin{defn}[Standard surface] A \emph{standard surface} is a noncompact Riemann surface
$(M,\CT)$ equipped with a pair-of-pants decomposition whose Reeb graph
as a planar graph is induced by the level sets of a plurisubharmonic function
$\psi$ where each pair-of-pants is an elementary cobordism equipped with the symplectic form
described as above.
\end{defn}
By definition, a standard surface is a (tame) Weinstein surface whose Weinstein structure
$(M,\omega_\CT,\psi)$
depends only on $\CT$ up to Weinstein homotopy in the sense of \cite{cieliebak-eliashberg}.

We then take an $\omega$-compatible almost complex structure $J \in \CJ(\CT)$ and regard the triple $(M,\omega_\CT,J)$ as an almost K\"ahler manifold.

\subsection{Symplectomorphism classification}
In this subsection, we will show that our standard surface $M'$ which is constructed by attaching building blocks and has the same ideal boundary as $M$ is symplectomorphic to $M$ if every end of $M$ is of infinite volume.

\begin{lemma}\label{lem:endvolume}Let $(M,\omega)$ be a noncompact symplectic surface every end of which is of infinite volume. For every compact subset $C$ whose boundary components are diffeomorphic to $S^1$, given an arbitrary real number $v>0$ and a boundary component $\partial C_1$, there is a cylinder $D$ attached to $C$ along $\partial C_1$ with volume $v$.
On the other hand, given an arbitrary real number $0<m<\text{(Volume of C)}$ and a boundary component $\partial C_1$, there is a cylinder in $C$ one of whose boundary is $\partial C_1$ with volume $m$.
\end{lemma}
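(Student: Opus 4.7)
The plan is to prove both claims by applying the intermediate value theorem to the $\omega$-area of a suitably chosen one-parameter family of embedded cylinders.

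For the first (exterior) claim, let $U$ be the closure of the connected component of $M\setminus\Int(C)$ whose boundary contains $\partial C_1$. The hypothesis that every end of $M$ has infinite volume, together with compactness of $C$, implies $\vol(U)=+\infty$ since $U$ contains at least one end of $M$. I would then construct a smooth proper embedding $\iota\colon S^1\times[0,\infty)\hookrightarrow U$ with $\iota(S^1\times\{0\})=\partial C_1$, so chosen that the truncated area
\[
V(t):=\vol\bigl(\iota(S^1\times[0,t])\bigr)
\]
is continuous and strictly increasing in $t$, with $V(0)=0$ and $V(t)\to\infty$. Concretely, pick a smoothly and properly embedded ray $\alpha\colon[0,\infty)\to U$ that leaves $\partial C_1$ orthogonally and escapes through an infinite-volume end of $U$; then thicken $\partial C_1\cup\alpha$ into an embedded cylinder whose width along $\alpha$ is enlarged as $t$ grows, so that the tube absorbs the full infinite area of the end. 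Given any $v>0$, IVT produces $t_v$ with $V(t_v)=v$, and $D:=\iota(S^1\times[0,t_v])$ is the required attached cylinder.

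For the second (interior) claim the same IVT strategy applies inward. Take an embedded inward collar of $\partial C_1$ in $C$ and continuously enlarge the enclosed annular region $D_s\subset C$, $s\in[0,s_{\max})$, keeping $\partial C_1$ as one boundary component throughout. The area $W(s):=\vol(D_s)$ is continuous with $W(0)=0$. Using the descending gradient flow of a Morse function $\psi\colon C\to\R$ vanishing on $\partial C_1$, together with controlled local deformations across the saddle values of $\psi$, one arranges $D_s$ to sweep through $C$ so that $W$ attains every value in $(0,\vol(C))$; IVT then delivers the cylinder of area $m$.

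The principal obstacle in both halves is verifying that $V$ and $W$ genuinely attain the asserted ranges. For $V(t)\to\infty$ I would use a compact exhaustion $K_1\subset K_2\subset\cdots$ of $U$ with $\vol(K_n)\nearrow\infty$ and choose the width profile of the tube so that $\iota(S^1\times[0,t_n])$ contains at least half of the area of $K_n\setminus K_{n-1}$ for a sequence $t_n\to\infty$. For $W(s)\to\vol(C)$ the cylindrical (genus zero, two-boundary) topology of $D_s$ must be preserved across critical levels of $\psi$; this is handled by letting $D_s$ absorb small disk-neighborhoods of each critical point by a local deformation that keeps $s\mapsto D_s$ a continuously enlarging family of embedded cylinders, thereby exhausting $C$ up to a set of arbitrarily small area.
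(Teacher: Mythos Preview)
Your approach is essentially the same as the paper's: both argue by continuously growing an embedded cylinder from $\partial C_1$ (outward into the infinite-volume complement for the first claim, inward into $C$ for the second) and appealing to the intermediate value theorem on the enclosed area. The paper's proof is extremely terse---just the two sentences ``extend it until its volume reaches $v$'' and ``extend from $\partial C_1$ to the inside of $C$ until its volume reaches $m$''---whereas you supply considerably more detail (the ray-thickening to force $V(t)\to\infty$, the Morse-theoretic sweep for the interior case), but the underlying idea is identical.
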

\begin{proof}Since every end of $M$ is of infinite volume, each connected component of $M\setminus C$ attached to $C$ along $\partial C_1$ is of infinite volume. Since this component is of infinite volume, we can find a cylinder attached to $C$ with arbitrary volume by extending it until its volume reach $v$. For the second part, extend from $\partial C_1$ to the inside of $C$ until its volume reaches $m$.
\end{proof}

To conclude this chapter, we will use the following lemma of Greene-Shiohama \cite{Greene-Shiohama}.

\begin{lemma}\label{lem:shio}
Suppose $M$ is a noncompact orientable manifold of dimension $n$ and $\{K_i|i=1,2,\dots\}$ is a sequence of $n$-dimensional compact connected submanifolds-with-boundary such that $\bigcup^{\infty}_{i=1}K_i=M$ and $K_i\cap K_j$ for all $i,j,i\neq j$, is either empty or is an $(n-1)$-dimensional submanifold of $M$ which is contained in the boundary of $K_i$ and also in the boundary of $K_j$. Suppose also that $\omega$ and $\tau$ are volume forms on $M$ such that $\int_{K_i}\omega=\int_{K_i}\tau$ for each $i=1,2,\dots$. Then there is a diffeomorphism $\varphi:M\rightarrow M$ such that $\varphi^*\omega=\tau$.
\end{lemma}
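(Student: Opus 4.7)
The plan is to bootstrap Moser's trick to the noncompact setting by building $\varphi$ as the pointwise limit of self-diffeomorphisms $\varphi_n$ of the growing exhaustion $L_n := K_1 \cup \cdots \cup K_n$. After re-enumerating the $K_i$ so that each $L_n$ is connected (possible because $M$ is connected and the $K_i$ meet only in codimension one), each $L_n$ is a compact manifold with piecewise smooth boundary, to which the compact Moser theory applies.

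The essential tool is the following relative Moser theorem on a compact manifold-with-boundary $K$: if $\omega_0, \omega_1$ are volume forms with $\int_K \omega_0 = \int_K \omega_1$ and $\omega_0 = \omega_1$ on a collar of $\partial K$, then there is a diffeomorphism $\psi : K \to K$, equal to the identity near $\partial K$, such that $\psi^* \omega_1 = \omega_0$. This is the standard Moser flow: solve $d\beta = \omega_1 - \omega_0$ with $\beta$ compactly supported in the interior (possible by the Poincar\'e lemma for compactly supported cohomology, since the top form $\omega_1 - \omega_0$ is compactly supported with zero integral), then flow along $X_t$ defined by $\iota_{X_t}\omega_t = -\beta$ with $\omega_t = (1-t)\omega_0 + t\omega_1$.

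The induction runs as follows. For the base case on $K_1$, replace $\tau|_{K_1}$ by $\tau_1$ agreeing with $\omega$ on a collar of $\partial K_1$ and still having total integral $\int_{K_1}\omega$; this is possible because $\int_{K_1}\tau = \int_{K_1}\omega$, so only a compactly supported correction is needed in the interior. Apply relative Moser to get $\varphi_1$ with $\varphi_1^*\omega = \tau_1$, identity near $\partial K_1$. For the inductive step, given $\varphi_n$ on $L_n$ with $\varphi_n^*\omega = \tau_n$ (where $\tau_n$ agrees with $\tau$ off thin collars of previously shared boundaries), choose $\tau_{n+1}|_{K_{n+1}}$ equal to $\omega$ on collars of both $\partial K_{n+1} \cap L_n$ and the free part of $\partial K_{n+1}$, with $\int_{K_{n+1}}\tau_{n+1} = \int_{K_{n+1}}\omega$, and apply relative Moser again. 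Gluing with $\varphi_n$ (which is identity on the shared collar by construction) gives $\varphi_{n+1}$ on $L_{n+1}$. The sequence stabilizes on every compact set, and the limit $\varphi := \lim_n \varphi_n$ is a smooth diffeomorphism of $M$ with $\varphi^*\omega = \tau$.

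The main obstacle is the bookkeeping at step $n+1$: the auxiliary form $\tau_{n+1}$ must simultaneously match $\omega$ on two disjoint collars of $\partial K_{n+1}$ (so the extension across the shared boundary is smooth and the relative Moser theorem applies on $K_{n+1}$) while still integrating to $\int_{K_{n+1}}\omega$. The piecewise volume-equality hypothesis $\int_{K_i}\omega = \int_{K_i}\tau$ is precisely what makes this possible: any discrepancy introduced by collar-modification can be absorbed by an exact, compactly supported correction in the interior of $K_{n+1}$, without which the induction would fail to close.
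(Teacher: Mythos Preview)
The paper does not supply a proof of this lemma; it is quoted from Greene--Shiohama and used as a black box. There is therefore no argument in the paper to compare your attempt against.

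Your construction, however, has a genuine gap. By design $\varphi_n^*\omega = \tau_n$, where $\tau_n$ has been altered to equal $\omega$ (not $\tau$) on collar neighborhoods of all the interface hypersurfaces $\partial K_i \cap \partial K_j$ lying inside $L_n$. Since you then set $\varphi_{n+1}|_{L_n} = \varphi_n$ at every later stage (``gluing with $\varphi_n$''), those collar equalities are permanent: in the limit you obtain a diffeomorphism $\varphi$ with $\varphi^*\omega = \tilde\tau$, where $\tilde\tau$ coincides with $\tau$ only outside an open neighborhood of $\bigcup_i \partial K_i$. Unless $\omega = \tau$ there to begin with, this is not the claimed conclusion $\varphi^*\omega = \tau$.

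The collar trick is meant to guarantee smoothness of the glued map, which is the right concern, but by forcing the collar values to be $\omega$ you have lost $\tau$. The actual Greene--Shiohama argument avoids ever modifying $\tau$: one uses the version of Moser's theorem on a compact $K$ that fixes $\partial K$ pointwise (not a collar) and yields $\psi^*\omega = \tau$ exactly, and then handles the smooth extension across the shared boundary by extending the Moser \emph{vector field} --- which is tangent to $\partial K$ --- into a collar of the next piece via a cutoff, rather than by deforming the target form. At step $n+1$ one first extends the time-dependent flow of step $n$ smoothly into $K_{n+1}$, then runs Moser on $K_{n+1}$ against the form this extension pulls back (which still has integral $\int_{K_{n+1}}\tau$). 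The bookkeeping is genuinely more delicate than your sketch, but it produces $\varphi_n^*\omega = \tau$ on $L_n$ at every stage, and hence $\varphi^*\omega = \tau$ in the limit.
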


\begin{theorem}
A noncompact symplectic surface every end of which has infinite volume is symplectomorphic to a standard surface.
\end{theorem}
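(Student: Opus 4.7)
The plan is to reduce the claim to Greene--Shiohama's Lemma \ref{lem:shio} by first fixing a diffeomorphism onto a standard model, and then matching volumes through a carefully chosen compact exhaustion. The argument proceeds in three steps.

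First, apply Lemma \ref{lem:diffeo} to fix a diffeomorphism $\Phi: M' \to M$, where $M'$ is assembled by iteratively attaching the three types of building blocks (pair of pants, cylinder with genus, infinite cylinder) to a disk. Pull back $\omega$ through $\Phi$ and compare the resulting area form $\Phi^{*}\omega$ with the symplectic form $\tau$ coming from the standard-surface structure on $M'$. The claim reduces to producing a diffeomorphism $\varphi: M' \to M'$ with $\varphi^{*}(\Phi^{*}\omega) = \tau$, because then $\Phi \circ \varphi: (M',\tau) \to (M,\omega)$ is the desired symplectomorphism.

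Second, build a compact exhaustion $K_{1} \subset K_{2} \subset \cdots$ of $M'$ by ordering its atomic pieces: the base disk, each finite building block (pants or cylinder with genus), and finite sub-cylinders $[a_{j},a_{j+1}] \times S^{1}$ chopped out of each infinite cylindrical end. Order them so that each $K_{i}$ is connected, has smooth $S^{1}$ boundary components, and $K_{i+1} \setminus K_{i}$ is exactly one atomic piece glued along a circle; this makes $K_{i} \cap K_{i+1}$ a disjoint union of circles lying in both boundaries, as required by Lemma \ref{lem:shio}. Now exploit the flexibility of the standard-surface construction: on each finite building block the area form is only prescribed to equal $\pi^{*}(ds\wedge dt)$ near the boundary and may be smoothly modified in the interior to have any prescribed positive total area, and on each infinite cylindrical end the cut parameters $a_{j}$ may be chosen freely. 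Inductively assign the $\tau$-area of the $i$-th piece to be the strictly positive, finite quantity
\[
\int_{K_{i}} \Phi^{*}\omega \;-\; \int_{K_{i-1}} \Phi^{*}\omega,
\]
so that $\int_{K_{i}} \tau = \int_{K_{i}} \Phi^{*}\omega$ holds for every $i$. The hypothesis that every end of $(M,\omega)$ has infinite volume enters precisely here: it ensures, via Lemma \ref{lem:endvolume}, that $\int_{K_{i}\cap E} \Phi^{*}\omega \to \infty$ along each end $E$, so the sequence $a_{j}$ escapes to infinity and the adjusted pieces genuinely exhaust each infinite cylinder of $M'$.

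Third, apply Lemma \ref{lem:shio} to the two area forms $\tau$ and $\Phi^{*}\omega$ on $M'$ with the exhaustion $\{K_{i}\}$ just constructed; the intersection hypothesis and the volume equalities are both met, so we obtain a diffeomorphism $\varphi : M' \to M'$ with $\varphi^{*}(\Phi^{*}\omega) = \tau$, completing the proof. The main obstacle is the second step, specifically the simultaneous matching of volumes along all ends while keeping the $a_{j}$'s divergent; without the infinite-volume assumption the sub-cylinder areas could accumulate to a finite bound and fail to cover the infinite cylindrical end of $M'$, which is the one place the hypothesis is genuinely used.
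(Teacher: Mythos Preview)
Your argument follows the paper's route: Lemma~\ref{lem:diffeo} for the diffeomorphism onto a standard model, an exhaustion by building blocks, then Greene--Shiohama (Lemma~\ref{lem:shio}). The one substantive difference lies in the volume-matching step. You hold $\Phi$ fixed and tune the standard form $\tau$ on each block to match $\int \Phi^*\omega$; the paper instead holds the standard form $\omega'$ fixed and deforms the diffeomorphism near each $\partial K_i$, invoking Lemma~\ref{lem:endvolume} to slide the boundary circle inward or outward by exactly the required $\omega$-area. The paper's version uses the infinite-volume hypothesis uniformly at every end (cylindrical or not) and avoids your assertion that a standard block can carry \emph{any} prescribed positive total area---the construction in Subsection~\ref{subsec:standard-surface} fixes $ds\wedge dt$ on collars, which bounds the area below unless one also shrinks the collar width, a point you leave implicit. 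One further notational slip: Lemma~\ref{lem:shio} is stated for a covering by compact connected pieces whose pairwise intersections lie in their mutual boundaries, not for a nested exhaustion $K_1\subset K_2\subset\cdots$; you should apply it to the atomic pieces $K_i\setminus K_{i-1}$ themselves rather than to the cumulative unions.
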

\begin{proof} By Lemma \ref{lem:diffeo}, there is a standard surface $(M',\omega')$ such that given manifold is diffeomorphic to $(M,\omega)$. Let $\tau$ be a volume form on $M'$ given from pulling back $\omega$ along the diffeomorphism. In the proof of Lemma \ref{lem:diffeo}, each closure of $M'_{k+1}-M'_k$ consists of finitely many building blocks. We can order them and denote as $\{K_i|i=1,2,\dots\}$. Then their union becomes $M'_1,M'_2,\dots$ and $\bigcup^{\infty}_{i=1}K_i=M'$ holds. Note that each boundary component of $K_i$ is diffeomorphic to $S^1$. Intersection of $K_i$ and $K_j$ for $i\neq j$ is an empty set or a disjoint union of their boundary components. Compare the volume of $K_1$ w.r.t. $\tau$ and $\omega$. If they are equal, move to $K_2,K_3,\dots$. Else, find a cylinder attached to $K^1$ or inside of $K^1$ with the volume equal to the difference using Lemma \ref{lem:endvolume}. By varying the diffeomorphism, shrink or expand near the boundary of $K^1$ and get a new volume form, say $\tau_1$. Then $\tau_1$ and $\omega'$ give us the same volume on $K^1$. Repeating this, we may assume that volume on $K_i$ with respect to $\tau$ and $\omega'$ is equal for all $i\geq 1$. Therefore, our $\{K_i \mid i=1,2,\dots\}$ satisfies the condition of Lemma \ref{lem:shio} and we can get a symplectomorphism between two surfaces.
\end{proof}

Therefore, each symplectomorphism class of noncompact Riemann surface with infinite volume ends contains a standard surface. From now on, we will regard standard surface as a representative of such a symplectomorphism class.

\section{Surfaces with fractal ends}\label{sec:fractal_generators}

In Part III of the present paper, we will provide some algebraic description of the endomorphism algebra of
the Fukaya category. In general the end structure can be
too complex to provide a simple description of the algebra. For example
the ideal boundary of a surface may have uncountably many ends. Because of this, we will
restrict ourselves to some good cases for which they admit some
reasonably concrete description. These are the cases what we call are of \emph{fractal ends}
whose definition is in order. The Cantor set is the simplest and most famous example of a fractal set
and we can extract similar fractal structure from some of its subsets.

\begin{defn}\label{defn:fractal}
Let $\CS$ be the sub-collection of $\CI$ given in \eqref{eq:cS}.
Given $I\in \cS$, a finite set of mutually disjoint intervals $\{I_1,\dots,I_n\}\subset \cS$
is called a finite cover of $I$ if
$I_i\subsetneq I$ for all $i$ and $I\cap B(M)\subset I_1\cup \dots \cup I_n$.
\begin{enumerate}
\item An interval $I\in \cS$ is called \emph{fractal} if there is a finite cover $\{I_1,\dots,I_n\}$ of $I$
such that $\{K\in \cS \mid K\subset I \}$ is homeomorphic to $\{K\in \cS \mid K\subset I_i \}$ for all $i$.
We will call such a cover a \emph{fractal cover} of $B(M)$.
\item An interval $I\in \cS$ is called \emph{quasi-fractal} if
there is a finite cover $\{I_1,\dots,I_n\}$ of $I$ such that there is a set of intervals $J_i\in \cS$ for $i = 1, \ldots, n$ such that
\begin{enumerate}
\item
$I_i\subsetneq J_i\subset I$,
\item  $\{K\in \cS|K\subset J_i \}$ is homeomorphic to $\{K\in \cS|K\subset I_i \}$.
\end{enumerate}
We will call such a cover a \emph{quasi-fractal cover}.
\end{enumerate}
\end{defn}

\begin{defn}[Fractal and quasi-fractal surface]\label{defn:fractal_surface}
We say a finite set of mutually disjoint intervals $\{I_1,\dots,I_n\}\subset \cS$
a finite cover of the ideal boundary $B(M)$ of $M$ if it is a finite cover of $[0,1]\in \cS$.
\begin{enumerate}
\item A separable surface $M$ with the ideal boundary $\cS$ is called (finitely generated) fractal
if there is a finite cover $\{I_1,\dots,I_n\}$ of $B(M)$ such that all $I_i$ are fractal.
\item A separable surface $M$ with the ideal boundary $\cS$ is called (finitely generated) quasi-fractal
if $[0,1]\in \cS$ is quasi-fractal.
We will call a quasi-fractal cover of $B(M)$ as a quasi-fractal cover of $M$.
\end{enumerate}
\end{defn}

\begin{rmk}

\begin{enumerate}[(a)]
\item If an infinite cylinder is attached to an interval $I\in \cS$,
i.e. $\xi_\cS(J)=0$ and $\chi_\cS(J)=0$ for all $J\subset I$, $I$ is fractal.
Therefore, every finite type surface is fractal.
\item A fractal surface is quasi-fractal.
A quasi-fractal surface is fractal if and only if $J_i=J_j$ or $J_i\cap J_j=\emptyset$ for all $i\neq j$.
\item If an interval $I$ is fractal [resp.quasi-fractal] with a fractal [resp.quasi-fractal] cover
$\{I_1,\dots,I_n\}$, then each $I_i$ will also be fractal [resp.quasi-fractal].
\end{enumerate}

\end{rmk}

Now we provide a few examples of fractal surfaces.

\begin{example}\label{exam:fractal}
\begin{enumerate}
\item  A surface $M$ with $B(M)=\CC$, $B'(M)=\emptyset$ is fractal.
$\CS=\mathcal{I}$ and $\xi_\CS(I)=1$, $\chi_\CS(I)=0$
for all $I\in \CS$.
This surface is generated by attaching pair of pants repeatedly.
Then $M$ is finitely generated fractal with cover $\{[0,1]\}$.
$[0,1]$ has a fractal cover $\{[0,1/3],[2/3,1]\}$.
\item  A surface with two ends, one planar and one non-planar.
We may write it as
$$
S=\{[0,1]\}\cup\{[0,1/3],[0,1/9],\dots\}\cup\{[2/3,1],[8/9,1],\dots\},
$$
with
\beastar
\chi_S([0,1/3]) & = & \chi_S([0,1/9])=\dots=0, \\
\chi_S([2/3,1]) & = & \chi_S([8/9,1])=\dots=2.
\eeastar
Then $M$ is finitely generated fractal with cover $\{[0,1/3],[2/3,1]\}$.
$[0,1/3]$ is of finite type and $[2/3,1]$ has a fractal cover $\{[8/9,1]\}$.
\end{enumerate}
\end{example}

Here are some quasi-fractal surfaces which are not fractal.
\begin{example}\label{exam:quasi-fractal}
\begin{enumerate}
\item
A surface $M$ without genus whose ideal boundary has one limit point.
$B(M)=\{0,\frac{2}{3},\frac{8}{9},\dots\}\cup \{1\}$.
Then $M$ is not finitely generated fractal, but finitely generated quasi-fractal with quasi-fractal cover $\{[0,\frac{1}{9}],[\frac{2}{3},1]\}$.
$\{K\in \cS\mid K\subset [0,\frac{1}{9}]\}$ is homeomorphic to $\{K\in \cS|K\subset [0,\frac{1}{3}]\}$ and
$\{K\in \cS \mid K\subset [\frac{2}{3},1]\}$ is homeomorphic to $\cS$.
\item
Another surface $M$ without genus whose ideal boundary has one limit point.
$B(M)=\{0,1,\frac{2}{3},\frac{7}{9},\frac{20}{27},\ldots\}\cup \{\frac{3}{4} \}$.
It is homeomorphic to the surface given in Item (1) above.
Then $M$ is not finitely generated fractal, but finitely generated quasi-fractal with quasi-fractal cover $\{[0,\frac{1}{9}],[\frac{2}{3},1]\}$.
$\{K\in \cS \mid K\subset [0,\frac{1}{9}]\}$ is homeomorphic to $\{K\in \cS \mid K\subset [0,\frac{1}{3}]\}$ and
$\{K\in \c \mid K\subset [\frac{2}{3},1]\}$ is homeomorphic to $\cS$.
\end{enumerate}
\end{example}

\begin{figure}

    \centering
    \subfloat{{
    \resizebox{3.5cm}{4cm}{
\begin{tikzpicture}

\draw[dashed] (8,3) arc (0:360:0.5 and 0.1);
\draw[] (6,2.5) arc (0:360:0.5 and 0.1);
\draw[] (5,1) arc (0:360:0.5 and 0.1);
\draw[] (4,-0.5) arc (0:360:0.5 and 0.1);

\draw[rounded corners=13pt] (7,3)-- ++(0,-1)--++(-1,0)--++(0,0.5);
\draw[rounded corners=13pt] (8,3)-- ++(0,-1)--++(-1,-0.5)--++(0,-0.5);

\draw[rounded corners=13pt] (5,2.5)-- ++(0,-0.5)--++(1,-0.5)--++(0,-0.5);

\draw[rounded corners=13pt] (6,1)-- ++(0,-0.5)--++(-1,0)--++(0,0.5);
\draw[rounded corners=13pt] (7,1)-- ++(0,-0.5)--++(-1,-0.5)--++(0,-0.5);
\draw[rounded corners=13pt] (4,1)-- ++(0,-0.5)--++(1,-0.5)--++(0,-0.5);

\draw[rounded corners=13pt] (5,-0.5)-- ++(0,-0.5)--++(-1,0)--++(0,0.5);
\draw[rounded corners=20pt] (6,-0.5)-- ++(0,-0.8)--++(-1.5,-0.5)--++(-1.5,0.5)--++(0,0.8);

\draw[fill=black] (7.5,3.5) circle (1pt);
\draw[fill=black] (7.5,3.7) circle (1pt);
\draw[fill=black] (7.5,3.9) circle (1pt);

\end{tikzpicture}
    }
    }}
    \qquad
    \subfloat{{

    \resizebox{3cm}{4cm}{
\begin{tikzpicture}

\draw[dashed] (6,3) arc (0:360:0.5 and 0.1);
\draw[] (4,2.5) arc (0:360:0.5 and 0.1);
\draw[] (7,1) arc (0:360:0.5 and 0.1);
\draw[] (4,-0.5) arc (0:360:0.5 and 0.1);

\draw[rounded corners=13pt] (5,3)-- ++(0,-1)--++(-1,0)--++(0,0.5);
\draw[rounded corners=13pt] (6,3)-- ++(0,-1)--++(-1,-0.5)--++(0,-0.5);

\draw[rounded corners=13pt] (3,2.5)-- ++(0,-0.5)--++(1,-0.5)--++(0,-0.5);

\draw[rounded corners=13pt] (6,1)-- ++(0,-0.5)--++(-1,0)--++(0,0.5);
\draw[rounded corners=13pt] (7,1)-- ++(0,-0.5)--++(-1,-0.5)--++(0,-0.5);
\draw[rounded corners=13pt] (4,1)-- ++(0,-0.5)--++(1,-0.5)--++(0,-0.5);

\draw[rounded corners=13pt] (5,-0.5)-- ++(0,-0.5)--++(-1,0)--++(0,0.5);
\draw[rounded corners=20pt] (6,-0.5)-- ++(0,-0.8)--++(-1.5,-0.5)--++(-1.5,0.5)--++(0,0.8);

\draw[fill=black] (5.5,3.5) circle (1pt);
\draw[fill=black] (5.5,3.7) circle (1pt);
\draw[fill=black] (5.5,3.9) circle (1pt);

\end{tikzpicture}
    }
    }}
    \caption{Quasi-fractal surfaces in Example \ref{exam:quasi-fractal}}
\end{figure}
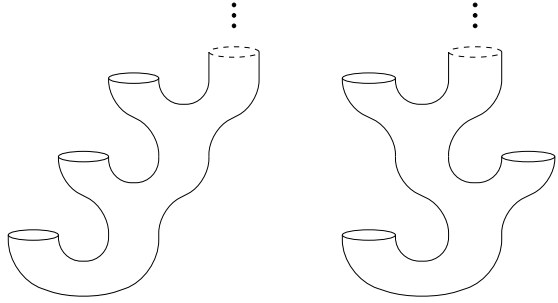

Since the symplectomorphism type of a standard surface is classified by
the homeomorphism type of its ideal boundary, from now on
we may assume that the homeomorphisms mentioned in Definition \ref{defn:fractal}
are translation maps in $\RR$ of the form $x\mapsto 3^k x+c$, where $k\in \ZZ$, $c\in \RR$
and we regard the intervals as subsets of $\RR$.
This translation map can be extended to a translation map on $\RR^2$ which is a homeomorphism
between two disks in $\cD$ defined in the proof of Lemma \ref{lem:diffeo}.
This corresponds to raising the level of the Morse function $f$
given in Definition \ref{defn:reeb-graph} by $k$.
Moreover, if $I$ has a finite cover $\{I_1,\dots,I_n\}$, $I_i\subsetneq I$ implies $k<0$ and
$|\bigcap_{j=1}^{\infty} f^j(I)|=1$.
This provides the description of a fractal structure of the standard surface
that is fractal.

\begin{prop}\label{prop:frac_span}
Let a standard surface $(M,\omega)$ be given and let
$\{I_1,\ldots,I_n\}$ be a finite cover of $B(M)$ such that
$I_i$ is fractal with a fractal cover $\{I_{i,1},\ldots,I_{i,k_i}\}$ for all $1\leq i \leq n$. Then
the following hold:
\begin{enumerate}
\item There is a symplectomorphism $\phi_{i,j}:{I_i}_*\rightarrow {I_{i,j}}_*$ for all $1\leq j \leq k_i$.
\item For all $i$ there is a compact subset $C_i$ of ${I_i}_*$ such that
symplectomorphism image of $C_i$ covers ${I_i}_*$.
\end{enumerate}
\end{prop}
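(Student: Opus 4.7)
The plan is to exploit two features of the standard-surface construction: (i) the surface is assembled canonically from a handful of building blocks (pair of pants, cylinder, cylinder with genus, and possibly infinite cylinder), each carrying a fixed standard symplectic form built from unit flat cylinders with identical boundary parametrization; and (ii) under the normalization stated just before the proposition, the homeomorphism $\{K\in\cS\mid K\subset I_i\}\cong\{K\in\cS\mid K\subset I_{i,j}\}$ is realized by a translation $x\mapsto 3^k x+c$ of the ambient $\RR$, equivalently a shift of the Morse function $\psi$ by $|k|$.

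For part (1), I first extract from the blueprint homeomorphism a bijection $h$ preserving the data $(\xi_\cS,\chi_\cS)$ on each node, so that corresponding building blocks $B_K$ and $B_{h(K)}$ are of the same type. The symplectomorphism $\phi_{i,j}:{I_i}_*\to {I_{i,j}}_*$ is then defined block-by-block, using on each $B_K$ the canonical isometry onto $B_{h(K)}$ coming from the fact that both blocks are constructed via the same recipe of unit flat cylinders (\S \ref{subsec:standard-surface}) with the standard area form $ds\wedge dt$. Because every boundary circle of every building block is standardly parametrized as $\{0\}\times S^1$ and all gluings are by $(0,t)\mapsto(0,t)$, the blockwise maps agree on collars and assemble into a globally smooth area-preserving diffeomorphism, i.e.\ a symplectomorphism.

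For part (2), set
\[
C_i:=\bigcup\bigl\{B_K \mid K\in\cS,\ K\subseteq I_i,\text{ and } K\supsetneq I_{i,l}\text{ for some }l\bigr\}.
\]
Since $\{I_{i,l}\}$ is a finite cover of $I_i\cap B(M)$, only finitely many intervals of $\cS$ sit strictly between $I_i$ and the $I_{i,l}$'s, so this is a finite union; moreover each such $K$ strictly contains some $I_{i,l}\in\cS$ and hence is not a leaf of the blueprint tree, so $B_K$ is a compact building block (no infinite cylinders appear). Thus $C_i$ is compact. By construction ${I_i}_*=C_i\cup\bigcup_l{I_{i,l}}_*$, and by part (1) each ${I_{i,l}}_*=\phi_{i,l}({I_i}_*)$. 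Iterating this decomposition (using that each $I_{i,l}$ is itself fractal by Remark~(c) after Definition \ref{defn:fractal_surface}, so $\phi_{i,l}$ transports the decomposition into ${I_{i,l}}_*$) yields
\[
{I_i}_*=\bigcup_{n\geq 0}\ \bigcup_{(j_1,\dots,j_n)}\phi_{i,j_1}\circ\cdots\circ\phi_{i,j_n}(C_i).
\]
The union is exhaustive because each point of ${I_i}_*$ lies in some $B_K$ with $K\in\cS$ at finite depth in the tree below $I_i$, hence is captured at a finite stage; this exhibits ${I_i}_*$ as a union of symplectomorphic images of the compact set $C_i$.

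The main obstacle is verifying in part (1) that the blockwise assembly produces a genuinely smooth symplectomorphism rather than a merely piecewise continuous one; this reduces to the observation that the standard-surface construction of \S \ref{subsec:standard-surface} equips every boundary circle of every building block with identical flat collar data, so the blockwise isometries automatically match to all orders on overlaps. A secondary point of care is the degenerate ``linear-chain'' case of Remark~(a) after Definition \ref{defn:fractal_surface}, where ${I_i}_*$ is an infinite half-cylinder, $C_i$ is a single unit cylinder, and the $\phi_{i,l}$ are unit translations: there the covering statement collapses to the obvious exhaustion of a half-cylinder by unit shifts.
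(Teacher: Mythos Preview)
Your proof is correct and follows essentially the same approach as the paper's. Both arguments build $\phi_{i,j}$ block-by-block from the canonical identifications between building blocks of the same type in the standard-surface construction, and both define $C_i$ as the (finite) union of building blocks $B_K$ with $K\subseteq I_i$ but $K\not\subseteq I_{i,l}$ for any $l$ (your characterization ``$K\supsetneq I_{i,l}$ for some $l$'' is equivalent since the $I_{i,l}$ are disjoint and cover $I_i\cap B(M)$), then iterate the decomposition ${I_i}_*=C_i\cup\bigcup_l\phi_{i,l}({I_i}_*)$ to exhaust ${I_i}_*$. Your version is slightly more careful about smoothness on collars and about the degenerate linear-chain case; the paper instead explicitly notes the abuse of notation needed to make sense of compositions $\phi_{i,j_m}\circ\cdots\circ\phi_{i,j_1}$ (each factor really acts on a different copy of the self-similar piece), which you handle implicitly via Remark~(c).
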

\begin{proof}
Since we constructed a standard surface by repeatedly attaching a sequence of
building blocks along the boundary, we have a trivial one-to-one correspondence between two building blocks of the same type, which will be a translation map if we embed our standard surface in $\RR^3$. This map is trivially symplectomorphism. Also by the construction of standard surfaces, if homeomorphism between $\{J\in \cS|J\subset I_i \}$ and $\{J\in \cS|J\subset I_{i,j} \}$ is of the form $f(x)=3^k x+c$, we have the same procedure to construct ${I_i}_*$ and ${I_{i,j}}_*$.
Use the symplectomorphism between same building blocks and gluing to obtain a symplectomorphism $\phi_{i,j}:{I_i}_*\rightarrow {I_{i,j}}_*$.

For the second statement, we will define $C_i$ as
\be
C_i:={I_i}_*\setminus \bigcup_{j=1}^{i_k}\text{\rm Int}({{I_{i,j}}_*})={I_i}_*\setminus \bigcup_{j=1}^{i_k}\text{\rm Int}(\phi_{i,j}({{I_{i}}_*}))
\ee
In other words, $C_i$ is a union of building blocks $B_J$ attached to interval $J$ such that $J\subset I_i$ and $J\not\subset I_{i,j}$ for all $j$. (See Definition \ref{defn:building-block} for the definition
of $B_J$.)
We can also find a symplectomorphism between ${I_{i,j}}_*$ and $\phi_{i,l}({I_{i,j}}_*)$ for all $j,\,l$ and we will abuse notation and also call such symplectomorphism as $\phi_{i,l}$.

Then
\be
C_i\cup \bigcup_{j=1}^{i_k}{{\phi_{i,j}}(C_i)}={I_i}_*\setminus \bigcup_{j,l=1}^{i_k}
\text{\rm Int}({\phi_{i,l}} \circ {\phi_{i,j}}({I_i}_*))
\ee
and we can repeat it.
Also note that $\{{{\phi_{i,j_m}}\circ \dots \circ {\phi_{i,j_1}}({I_i}_*)}\}_{m\geq 0}$ for a sequence
$\{j_1,j_2,\ldots\}$ is an end. Therefore,
$$
\left\{\bigcup_{j_1,\ldots,j_m=1}^{i_k}{{\phi_{i,j_m}}\circ \dots \circ
 {\phi_{i,j_1}}(C_i)})\right\}_{m\geq 0}
$$
is a compact exhaustion of ${I_i}_*$ and the images of $C_i$ under
the symplectomorphisms covers ${I_i}_*$.
\end{proof}

We can do the similar thing for the case of quasi-fractal surface, but we will omit the proof.

\begin{lemma}\label{lem:quasifrac_span}
Let a standard surface $(M,\omega)$ be given.
Suppose $M$ is quasi-fractal with a quasi-fractal cover $\{I_1,\ldots,I_n\}$ such that
there exists $I_i,\,J_i\in \cS$ with $I_i\subsetneq J_i$ and $\{K\in \cS|K\subset J_i \}$ is homeomorphic to $\{K\in \cS|K\subset I_i \}$. Then there is a symplectomorphism $\phi_i:{J_i}_*\rightarrow {I_i}_*$ for all $i$.
Moreover, for all $i$ there is a compact subset $C_i$ of ${J_i}_*$ such that symplectomorphism image of $C_1,\ldots,C_n$ covers $\bigcup_{i=1}^{n}{J_i}_*$.

\end{lemma}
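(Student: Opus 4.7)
The plan is to mirror the proof of Proposition \ref{prop:frac_span}, replacing the strict self-similarity of a fractal cover with the \emph{eventual} self-similarity encoded in a quasi-fractal cover. The key asymmetry is that now $I_i \subsetneq J_i$, so the defining symplectomorphism $\phi_i$ maps ${J_i}_*$ strictly inside itself rather than onto disjoint daughter regions, and the exhaustion argument becomes a single telescoping iteration of $\phi_i$ rather than a branching iteration of several $\phi_{i,j}$'s.

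First, for each $i$ I would construct $\phi_i \colon {J_i}_* \to {I_i}_*$ as follows. By the reduction recorded just before Proposition \ref{prop:frac_span}, after passing to the standard model of Section \ref{subsec:standard-surface} the homeomorphism between $\{K \in \cS \mid K \subset J_i\}$ and $\{K \in \cS \mid K \subset I_i\}$ is realized by a triadic affine map $x \mapsto 3^{k_i}x + c_i$ with $k_i < 0$. This map pairs each subinterval $J \subset J_i$ with a subinterval $J' \subset I_i$ of the same relative triadic position and, by naturality of $\xi_\cS, \chi_\cS$ under this identification, of the same building-block type. Using that any two building blocks of the same type are symplectomorphic by a map that is the identity on collars of the boundary circles (thanks to the normalized area prescription $\omega = \pi^*(ds \wedge dt)$ near each boundary from Section \ref{subsec:standard-surface}), these block-by-block symplectomorphisms glue along collars to a global symplectomorphism $\phi_i \colon {J_i}_* \to {I_i}_*$.

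Next, I would set
\begin{equation*}
C_i \;:=\; {J_i}_* \setminus \Int({I_i}_*),
\end{equation*}
a compact subset of ${J_i}_*$ consisting of those building blocks $B_J$ with $J \subset J_i$ but $J \not\subset I_i$. Because $\phi_i({J_i}_*) = {I_i}_* \subsetneq {J_i}_*$, iterating yields a strictly decreasing sequence ${J_i}_* \supsetneq \phi_i({J_i}_*) \supsetneq \phi_i^2({J_i}_*) \supsetneq \cdots$, and an elementary telescoping calculation gives
\begin{equation*}
\bigcup_{m=0}^{N} \phi_i^m(C_i) \;=\; {J_i}_* \setminus \Int\bigl(\phi_i^{N+1}({J_i}_*)\bigr).
\end{equation*}
Since $\{\phi_i^m({J_i}_*)\}_{m \ge 0}$ is a nested sequence of open sets representing a single end of $M$, its intersection contains no point of $M$; letting $N \to \infty$ therefore yields $\bigcup_{m \ge 0} \phi_i^m(C_i) = {J_i}_*$, and taking the union over $i$ shows that the symplectomorphism images of $C_1, \dots, C_n$ cover $\bigcup_{i=1}^n {J_i}_*$.

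The step I expect to be most delicate is the block-by-block gluing in the construction of $\phi_i$: one must arrange the chosen symplectomorphisms between matched building blocks to agree on collar neighborhoods of their shared boundary circles so that the global map is smooth and symplectic, and this compatibility must persist through the infinite iteration $\phi_i, \phi_i^2, \dots$. Once one fixes, \emph{once and for all}, a single symplectomorphism between each pair of building blocks of the same type that restricts to the identity in the cylindrical model near each boundary, both the gluing and the iteration become automatic, so the remaining content reduces to the combinatorial bookkeeping of triadic labels under the affine map $x \mapsto 3^{k_i} x + c_i$.
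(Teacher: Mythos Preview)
The paper omits the proof of this lemma entirely, saying only ``We can do the similar thing for the case of quasi-fractal surface, but we will omit the proof.'' Your proposal is exactly the adaptation of Proposition \ref{prop:frac_span} the authors have in mind: the block-by-block construction of $\phi_i$ via the triadic affine identification, the choice $C_i = {J_i}_* \setminus \Int({I_i}_*)$, and the telescoping exhaustion $\bigcup_{m=0}^N \phi_i^m(C_i) = {J_i}_* \setminus \Int(\phi_i^{N+1}({J_i}_*))$ are the direct analogues of the corresponding steps there, with the single contraction $\phi_i$ replacing the family $\{\phi_{i,j}\}$. One small slip: the sets $\phi_i^m({J_i}_*)$ are closed, not open, but this is harmless since you correctly take interiors in the telescoping identity; the point that $\bigcap_m \phi_i^m({J_i}_*) = \emptyset$ in $M$ follows because $f_i^m(J_i) \in \cI^{(k)}$ with $k \to \infty$, so the corresponding building blocks lie in $\psi^{-1}([k,\infty))$ and escape every compact set.
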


Therefore, quasi-fractal surface can be spanned by the symplectomorphism image of finitely many compact subsets.

\part{Definition of Fukaya category of infinite type surfaces}

Now we specialize to the case of a noncompact exact symplectic manifold
$(M,\omega)$ with $\omega = d\alpha$. We start with introducing a
general definition of tame symplectic manifolds for which the standard
analytic package of pseudoholomorphic curves can be applied.

\section{Liouville tame manifolds and gradient-sectorial Lagrangians}

We first recall the notion of tame symplectic manifolds: A symplectic manifold
$(M,\omega)$ is called tame if it admits an almost complex structure $J$ such
that the bilinear form $\omega(\cdot, J \cdot)=: g_J $ defines a Riemannian metric
such that its injectivity radius is bounded from below and its curvature is
uniformly bounded. As usual, we denote by
$$
\cJ_\omega
$$
the set of almost complex structures tame to $\omega$.

\subsection{Plurisubharmonic functions and $C^0$-estimates in general}
\label{subsec:quasi}

To study $C^0$ control of the solutions of perturbed Cauchy-Riemann equations,
we will use a class of barrier functions whose (classical) Laplacian is pinched from below.

The following definition is motivated by that of \cite{oh:intrinsic}.

\begin{defn} Let $J$ be any tame almost complex structure of $(M,\omega)$.
We call a smooth function $\psi: M \to \R$ that is
tame and $J$-\emph{plurisubharmonic} or a \emph{$J$-convex}
if it satisfies the following:
\begin{enumerate}
\item There exists a constant $C > 0$ such that
$$
\|\psi\|_{C^2} < C.
$$
\item There exists a compact subset $K \subset M$ such that
$$
-d(d\psi \circ J) = g \omega
$$
for some nonnegative function $g$.
on $M \setminus K$.
\end{enumerate}
When the above holds, we call the pair $(\psi,J)$ a \emph{pseudoconvex} pair.
\end{defn}
Note that if $K = M$, it is nothing but the definition of \emph{plurisubharmonic function}
for the complex structure $J$. By Condition (1), $g$ appearing in Statement (2) satisfies
$\|g\|_{C^0} \leq C'$ for some constant $C'$.

\begin{defn}[Tame Liouville manifolds] We call a tame exact symplectic manifold $(M,d\alpha)$,
not necessarily of finite type, a \emph{Liouville-tame symplectic manifold} if the following hold:
 \begin{enumerate}
 \item It admits a pseudoconvex pair $(\psi, J)$.
\item The associated Liouville vector field is gradient-like for the function $\psi$.
\end{enumerate}
\end{defn}
We define the ideal boundary of each cylindrical end $e$ of $M$ to be the set of
equivalence classes of Liouville rays of the end $e$ and denote by $\del_{e\infty}M$.

Now we recall the class of \emph{gradient-sectorial Lagrangian branes} with respect to
the given pseudoconvex pair $(\psi,J)$ from \cite{oh:intrinsic}
and define the notion of $\psi$-wrapped Fukaya category.
 We consider its normalized gradient vector field
\eqn
Z_{\psi}: = \frac{\grad \psi}{|\grad \psi|^2}
\eqnd
with respect to the usual metric
$$
g_J(v, w): = \frac{d\alpha(v, J w) + d\alpha(w,Jv)}{2}.
$$
\begin{defn}[Gradient-sectorial Lagrangian branes]
Let $(M,\alpha)$ be a Liouville-tame symplectic manifold equipped with a
pseudoconvex pair $(\psi,J)$.
We say that an exact Lagrangian submanifold $L$ of $(M,\alpha)$ is \emph{gradient-sectorial}
if
\begin{enumerate}
\item $L \subset \Int M \setminus \del M$ and $\text{\rm dist}(L,\del M) > 0$.
\item There exists a sufficiently large $r_0> 0$ such that
$L \cap {\psi}^{-1}([r_0,\infty))$ is $Z_{\psi}$-invariant, i.e., $Z_{\psi}$ is tangent to
$L \cap {\psi}^{-1}([r_0,\infty))$.
\end{enumerate}
\end{defn}

We now specialize to the case of 2 dimensional surfaces.

\subsection{Construction of tame $J$-plurisubharmonic functions}

We now explain the procedure of construction of plurisubharmonic exhaustion functions
on a noncompact orientable surface equipped with a hyperbolic
structure, utilizing the details of Weinstein's contact surgery \cite{alan:surgery}.

\begin{prop}\label{prop:plurisubh_const}
Let $(M,\omega)$ be a standard surface constructed in Lemma \ref{lem:diffeo}. There is a plurisubharmonic function $\psi:M\rightarrow \RR$ defined on $M'$ such that each building block is a connected component of $\psi^{-1}([k,k+1])$ for some integer $k\geq 1$.
\end{prop}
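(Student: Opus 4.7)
The plan is to construct $\psi$ inductively along the rooted binary graph encoding the pair-of-pants decomposition of $M$ from Section \ref{subsec:eq_class_end_structure}. The root corresponds to a central disk $D_0$ containing the base point $p_0$, and each subsequent vertex/edge at distance $k \geq 1$ from the root corresponds to a building block to be placed at depth $k$. I will build $\psi$ one building block at a time so that $\psi(p_0) = 0$, the central disk occupies $\psi^{-1}([0,1])$, each building block attached at depth $k$ is a connected component of $\psi^{-1}([k,k{+}1])$, and $\psi$ is plurisubharmonic with respect to the almost complex structure $J$ fixed in Section \ref{subsec:standard-surface}.

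The local models are explicit on each of the three building block types. On a finite or infinite cylinder equipped with the flat K\"ahler structure and cylindrical coordinates $(s,t) \in [0,1] \times S^1$ or $[0,\infty) \times S^1$, I take $\psi = k + s$; this is harmonic, hence trivially plurisubharmonic, and matches the prescribed boundary values. On a pair of pants I place a single index-$1$ critical point at level $k + \tfrac12$, with $\psi = k$ on the input boundary circle and $\psi = k+1$ on each of the two output boundary circles. Near the critical point I use the standard Weinstein model in which $\psi$ agrees with the real part of a holomorphic Morse chart plus a strictly convex radial perturbation, so that $-d(d\psi \circ J) \geq 0$ in the sense of \cite{alan:surgery}; away from the critical point I extend by flowing along the gradient of the perturbation to interpolate to the cylindrical boundary collars. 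A cylinder with a genus is decomposed into two pairs of pants joined by a finite cylinder, and the above models are concatenated.

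The main obstacle is ensuring global $C^\infty$-smoothness together with plurisubharmonicity across the gluing circles between building blocks, as well as across the two Lipschitz singularities of the flat metric on each pair of pants. To handle the first issue I would thicken each building block by a collar, arrange that in every collar $\psi$ has exactly the cylindrical form $k + s$, and glue along collars; then the assembled $\psi$ is automatically $C^\infty$ and harmonic on the gluing regions, so plurisubharmonicity needs only be verified on the compact ``waist'' of each pair of pants where the Weinstein model is used. For the flat-metric singularities described in Section \ref{subsec:standard-surface}, one checks that the inequality $-d(d\psi \circ J) \geq 0$ persists as a $(1,1)$-current across the two singular points, which follows because the metric is Lipschitz, $\psi$ is smooth there, and the convex perturbation in the Weinstein model strictly dominates any distributional contribution. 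Finally, with the Liouville primitive $\alpha = t\, ds$ on each cylindrical piece the associated Liouville vector field is gradient-like for $\psi = k + s$, so $\psi$ is a proper Morse exhaustion that endows $M$ with the Weinstein triple structure required in the rest of the paper.
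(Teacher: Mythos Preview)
Your approach is essentially the paper's: both construct $\psi$ inductively over the building blocks, using the standard Weinstein index-$1$ handle model on each pair of pants and the linear height function on cylinders. The paper invokes Weinstein's surgery theorem (Theorem~5.1 of \cite{alan:surgery}) as a black box with the explicit model $f = x^2 - \tfrac{1}{2}y^2 + 2$ on $\RR^2$, whereas you describe the same model as ``real part of a holomorphic Morse chart plus a strictly convex radial perturbation''; these amount to the same thing. The paper also explicitly rescales after the two handle attachments for a cylinder-with-genus so that the whole block still occupies a unit interval, which you should do as well after concatenating your two pair-of-pants models.

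One genuine slip in your final paragraph: the primitive $\alpha = t\,ds$ satisfies $d\alpha = dt \wedge ds = -\omega$, not $\omega$; and even with the sign fixed to $\alpha = -t\,ds$, the Liouville field is $X = t\,\partial_t$, which annihilates $\psi = k+s$ rather than being gradient-like for it. The correct cylindrical primitive for this purpose is $\alpha = s\,dt$, giving $X = s\,\partial_s$. This claim is not part of the proposition proper, so it does not affect the proof, but it should be corrected if you want the Weinstein-triple remark to stand.
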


\begin{proof}

First, embed the disk $D$ into $\RR^3$, with the height function $\psi:D^2\rightarrow [0,1]$
where $\psi^{-1}(0)=\{0\}$ and $\psi^{-1}(1)=\partial D$ and $\psi$ is convex.
Then it is a plurisubharmonic function on $D$.
Recall that in Lemma \ref{lem:diffeo}, we attached building blocks to $D$ along $\partial D$.
If an infinite cylinder is attached, the resulting surface is an infinite disk and it has no boundary.
Extend this height function $\psi$ to the whole surface and we are done.

If we attach a pair of pants, we do the contact surgery as in \cite{alan:surgery}.
We define a standard handle which we would use for the surgery.
In the standard symplectic space $\RR^2$ with canonical symplectic form $\omega=dx \wedge dy$ and
Liouville vector field $\xi=-2x\frac{\partial}{\partial x} +y\frac{\partial}{\partial y}$,
it is the negative gradient with respect to the standard metric of the Morse function
$f=x^2-\frac{1}{2}y^2+2$. Then the unstable manifold is $E_{-}=\{x=0\}$ and descending sphere is $S_{-}=E_{-}\cap f^{-1}(1)=\{(0,\pm \sqrt{2})\}$

A standard handle is a region in $\RR^2$ bounded by a neighborhood of $S_{-}$ in $f^{-1}(1)$ together with a connecting manifold $\sigma$ diffeomorphic to $S^0\times D^1$. We may choose this handle so that it is transverse to the Liouville vector field $\xi$ and so that its intersection with $f^{-1}(1)$ is contained in
an arbitrarily small neighborhood of the descending sphere. Note that $f$ is plurisubharmonic in this handle.

By Theorem 5.1 of \cite{alan:surgery}, we may attach this handle to our disk $D$ and $\psi$ will be extended to the resulting surface using $f$.
We reparametrize the domain or rescale the value if needed and assume that $\psi=2$ on the boundary.
If we want to attach a cylinder with a genus, then we attach the handle twice and get the desired result.
Plurisubharmonicity is a local property and so $\psi$ is plurisubharmonic on the whole surface.

By our construction, $\psi^{-1}([0,1])$ is a disk and connected components of $\psi^{-1}([1,2])$, $\psi^{-1}([2,3])$, $\dots$ are building blocks listed above and we are done(In this case, we regard attaching an infinite cylinder as attaching a finite cylinder repeatedly).
\end{proof}

The Morse function
$\psi$ in the proposition above provides us with a compact exhaustion sequence
$$
\psi^{-1}([0,1])\subset \psi^{-1}([0,2]) \subset \cdots
$$
of $M$. By the construction of $\psi$, for any $k\in \ZZ_{>0}$, $\psi^{-1}([0,k])$ is the result of attaching building blocks $k-1$ times to a disk. Also, each boundary component of $\psi^{-1}([0,k])$ is a simple closed curve and $\psi^{-1}([0,k])$ is a Liouville subdomain of $M$.

\begin{defn}Let $M$ be a standard surface with $\psi$ defined above.
We denote by
\be\label{eq:Mn}
M^{\leq n}:=\psi^{-1}([0,n])
\ee
\be
M^{\geq n}:=\psi^{-1}([n,\infty))
\ee
for $n\in \ZZ_{>0}$.
\end{defn}

Combined with the classification of surfaces, what we have done so far in this section
can be written as the following theorem.

\begin{theorem}
Let $M$ be an orientable separable surface without boundary.
Then there is a standard surface $(M',\omega)$ homeomorphic to $M$ which is
a Liouville-tame symplectic surface equipped with a pseudoconvex pair $(J,\psi)$.
\end{theorem}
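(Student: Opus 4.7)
The plan is to assemble the theorem from the constructions already developed in Part I: Richards' homeomorphism classification (Theorem \ref{thm:Repn_surface}), the diffeomorphism refinement of Lemma \ref{lem:diffeo}, the symplectic structure put on each building block in Section \ref{subsec:standard-surface}, and the exhaustion $\psi$ produced in Proposition \ref{prop:plurisubh_const}. The proof is essentially a synthesis: I must check that the geometric structures manufactured at each stage combine coherently to satisfy the definition of a Liouville-tame symplectic surface equipped with a pseudoconvex pair.

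First, given $M$, I would invoke Theorem \ref{thm:Repn_surface} together with Lemma \ref{lem:diffeo} to produce a surface $M'$ diffeomorphic (hence homeomorphic) to $M$ obtained by iteratively attaching the three building blocks (pair of pants, cylinder with genus, infinite cylinder) to a base disk $D$. On each building block I place the standard area form $\omega = \pi^*(ds\wedge dt)$ coming from the pants/cylinder coordinates of Section \ref{subsec:standard-surface}, smoothed near the finitely many singular gluing points. Because $M'$ is an orientable noncompact surface, it is homotopy equivalent to a $1$-dimensional CW complex, so $\omega$ is exact; one chooses a primitive $\alpha$ with $\omega=d\alpha$, tame in the sense that $\|\nabla^k\alpha\|_{C^0}\leq C(k)$, which is possible because the gluing uses only translates of a fixed local model. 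Equip each building block with the flat almost complex structure $J$ determined by $J(\partial_s)=\partial_t$; these glue to a tame $J\in \CJ_\CT$ on $M'$.

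Next, I would apply Proposition \ref{prop:plurisubh_const} to obtain a proper Morse function $\psi:M'\to \R$ that is $J$-plurisubharmonic, so that each building block is a connected component of $\psi^{-1}([k,k+1])$. The tameness $\|\nabla^k\psi\|_{C^0}\leq C(k)$ follows from the fact that outside the base disk the handle model is the fixed Weinstein handle with Morse function $f = x^2 - \tfrac12 y^2 + 2$, reused up to translation in each step, so all relevant norms are inherited from a single local model. The plurisubharmonicity condition $-d(d\psi\circ J)\geq 0$ holds locally in each handle by construction and is preserved under the smoothing at the gluing points via a standard cutoff argument; the singular points of the pants lie in a compact set, so their presence is accommodated by shrinking $K$ in the definition of pseudoconvex pair if necessary. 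Finally, because the local model was chosen so that the Liouville field $\xi=-2x\partial_x + y\partial_y$ is the negative gradient of $f$, the induced Liouville vector field $X_\alpha$ on each cylindrical end satisfies $X_\alpha[\psi]/|\grad\psi|^2 > 0$, i.e., it is $\psi$-gradient-like.

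It remains to check Riemannian tameness of $(M',g_J)$ with $g_J = \omega(\cdot, J\cdot)$: the metric is flat on each building block away from the finitely many pants singularities, and the building blocks are isometric copies of a fixed model, so the sectional curvature is uniformly bounded and the injectivity radius is uniformly positive after the smoothing. Together with the primitive $\alpha$ and the pair $(\psi,J)$, this exhibits $(M',\omega)$ as a Liouville-tame symplectic surface with pseudoconvex pair $(J,\psi)$, proving the theorem. The step I expect to require the most care is the smoothing near the pants singularities: one must arrange it so that plurisubharmonicity of $\psi$, tameness of $(g_J,\psi,\alpha)$, and the gradient-like condition on $X_\alpha$ are all preserved simultaneously in a uniform way across all infinitely many pants, which is why I would fix one model smoothing and then transport it by the gluing maps rather than smoothing each pants independently.
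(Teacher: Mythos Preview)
Your proposal is correct and takes essentially the same approach as the paper. In fact, the paper does not give a separate proof of this theorem at all: it simply states that ``combined with the classification of surfaces, what we have done so far in this section can be written as the following theorem,'' so your synthesis of Lemma \ref{lem:diffeo}, the building-block symplectic structure of Section \ref{subsec:standard-surface}, and Proposition \ref{prop:plurisubh_const} is exactly what is intended, and your additional care about the uniform smoothing near the pants singularities is a reasonable elaboration of details the paper leaves implicit.
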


\section{Maximum principle, $C^0$-estimates and energy estimates}
\label{sec:maximum-principle}

\subsection{Hamiltonian-perturbed Cauchy-Riemann equation}

In this subsection only, we consider the general Hamiltonian-perturbed
Cauchy-Riemann equation which is given as follows:
\begin{equation} \label{eq:cr}
\begin{cases}
u: \dot \Sigma \rightarrow M\\
u(\partial \dot \Sigma)\subset L\\
\lim_{s\rightarrow \pm \infty}u(\epsilon^k(s,\cdot))=x^k\in \mathfrak{X}\\
(du-X\otimes \alpha)^{0,1}=0
\end{cases}
\end{equation}
where $X = X_H$ for a domain-dependent function $H = H(z,x): \dot \Sigma \times M \to \R$
and $\alpha$ is a one-form on $\dot \Sigma$ that satisfies the conditions imposed in
\cite{abouzaid-seidel} which we do not elaborate leaving the details thereto.

The geometric and topological energies of a solution of above equation are defined by
\begin{equation}
E^{geom}(u)=\int_{\dot \Sigma}\frac{1}{2}|du-X\otimes \alpha|^2=\int_{\dot \Sigma}u^*\omega-u^*dH\wedge \alpha
\end{equation}
\begin{equation}
E^{top}(u)=\int_{\dot \Sigma}u^*\omega-d(u^*H\cdot \alpha)=E^{geom}(u)-\int_{\dot \Sigma}u^*H\cdot d\alpha
\end{equation}
When $H\geq 0$ and $\alpha$ is sub-closed , we get
\begin{equation}\label{eq:energyineq}
0\leq E^{geom}(u) \leq E^{top}(u)
\end{equation}
The first equality holds if and only if
 $du=X\otimes \alpha$ and the second holds iff $d\alpha =0$ \cite{abouzaid-seidel}.

Since we assume the Lagrangian is exact, i.e., $\iota_L^*\alpha = dh$, then we
define action of $\gamma \in \mathfrak{X}$ by
\begin{equation}
\mathcal{A}(\gamma)=\int^{1}_{0}-\gamma^*\theta+H(x(t))dt+h(\gamma(1))-h(\gamma(0))
\end{equation}

Then for any solution of (\ref{eq:cr}), the following holds.
\begin{equation}\label{eq:topenergy}
E^{top}(u)=\mathcal{A}(\gamma^0)-\sum^d_{k=1}\mathcal{A}(\gamma^k)
\end{equation}
Following as in \cite{oh:sectorial,oh:intrinsic},
we now explain how the pseudoconvex pair $(\psi,J)$ are paired with
 gradient-sectorial Lagrangian branes so that they become amenable to the
strong maximum principle and hence give rise to fundamental confinement results
for the $J$-holomorphic curve equation and other relevant
Floer-type equations such as the perturbed $J$-holomorphic curves associated
to the wrapping Hamiltonians of the type $H = \rho(\psi)$ for a function $\rho$ with $\rho' > 0$.

After this general set of Hamiltonian-perturbed Cauchy-Riemann equation is mentioned,
we will restrict ourselves to the case $H = 0$ for our purpose of construction of
a Fukaya category on $M$.

\subsection{Pseudoconvex pair, gradient sectorial Lagrangians and strong maximum principle}

Let $J$ be a tame almost complex structure of $M$.
Consider a $(k+1)$-tuple $(L_0, \ldots, L_k)$
of gradient-sectorial Lagrangian submanifolds. We denote
$$
\Sigma = D^2 \setminus \{z_0, \ldots, z_k\}
$$
and equip $\Sigma$ with strip-like coordinates $(\tau,t)$
with $\pm \tau \in [0,\infty)$ and $t \in [0,1]$ near each $z_i$.

\begin{rmk}
Here and hereafter the suffix $i$ is regarded as modulo $k+1$.
\end{rmk}

Then for a given collection of intersection points $p_i \in L_i\cap L_{i+1}$ for
$i = 0, \ldots, k$,
we wish to study maps $u: \Sigma \to \Mliou$ satisfying the Cauchy-Riemann equation
\eqn\label{eq:unwrapped-structure-maps}
\begin{cases}
\delbar_J u = 0\\
u(\overline{z_iz_{i+1}}) \subset L_i \quad & i = 0, \ldots ,k\\
u(\infty_i,t) = p_i, \quad & i = 0, \ldots ,k.
\end{cases}
\eqnd
The following theorem is proved in \cite[Theorem 8.7]{oh:sectorial} for the
particular case $\psi = \mathfrak{s} $ of the end-profile function but
the same proof applies to general pseudoconvex pair. For readers' convenience, we
duplicate the proof here in the current context.

\begin{theorem}\label{thm:pseudoconvex} Let $(\psi, J)$ be a pseudoconvex pair.

Let $u$ be a solution to~\eqref{eq:unwrapped-structure-maps}.
Then there exists a sufficiently large $r > 0$ such that
\eqn\label{eq:confinement}
\Image u \subset ({\psi} )^{-1}((-\infty,r])
\eqnd
\end{theorem}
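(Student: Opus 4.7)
The plan is to apply the strong maximum principle together with Hopf's boundary point lemma to the composed function $\psi\circ u$ on the punctured disk $\Sigma$. First I would fix $r_0>0$ large enough that for every $i=0,\ldots,k$, $L_i\cap\psi^{-1}([r_0,\infty))$ is $Z_\psi$-invariant (possible by the gradient-sectorial hypothesis applied to finitely many Lagrangians) and $\psi^{-1}([r_0,\infty))\subset M\setminus K$, where $K$ is the compact set from the pseudoconvexity condition. Then set
\[
r := \max\{\,r_0,\;\psi(p_0),\ldots,\psi(p_k)\,\} + 1.
\]
Because $u(z)\to p_i$ as $z\to z_i$ and $\psi(p_i)<r$, the open set $V:=\{z\in\Sigma\mid \psi(u(z))>r\}$ is relatively compact in $\Sigma$ (its closure is disjoint from small neighborhoods of all punctures), and $\psi\circ u$ is bounded on $\Sigma$. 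The goal is to prove $V=\emptyset$.

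Two geometric identities drive the argument. On $V\cap\Int\Sigma$, $u$ takes values in $M\setminus K$, so $J$-holomorphicity gives the pointwise identity
\[
u^\ast\bigl(-d(d\psi\circ J)\bigr) = (g\circ u)\,u^\ast\omega \geq 0,
\]
because $u^\ast\omega$ is a nonnegative multiple of any chosen area form on $\Sigma$. Translated through the conformal structure, this reads $\Delta_\Sigma(\psi\circ u)\geq 0$, so $\psi\circ u$ is subharmonic on $V$. On $V\cap\partial\Sigma$, $u$ takes values in the $Z_\psi$-invariant part of some $L_i$, so $\nabla_{g_J}\psi\in TL_i$ at $u(z)$. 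Writing $\tau,\nu$ for the unit tangent and outward normal of $\partial\Sigma$, the $J$-holomorphic Lagrangian boundary condition gives $du(\nu)=J\,du(\tau)$ with $du(\tau)\in TL_i$, so using that $L_i$ is Lagrangian and that $J\cdot TL_i$ is $g_J$-orthogonal to $TL_i$ one obtains
\[
d(\psi\circ u)(\nu) = g_J\bigl(\nabla\psi,\,J\,du(\tau)\bigr) = 0,
\]
i.e., the conormal derivative of $\psi\circ u$ vanishes along the Lagrangian boundary wherever $\psi\circ u>r_0$.

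Assume toward contradiction that $V\neq\emptyset$ and set $c:=\sup_\Sigma \psi\circ u>r$, attained on the compact set $\overline V$. An interior maximum inside $V$ is forbidden by the strong maximum principle, since the subharmonic function $\psi\circ u$ would be locally constant equal to $c$ on the connected component of $V$ containing the maximum, contradicting continuity at $\partial V\cap\Int\Sigma$ where $\psi\circ u=r<c$. A maximum on $\partial\Sigma\cap V$ is forbidden by Hopf's boundary point lemma, which would force $\partial_\nu(\psi\circ u)>0$ while the boundary identity above gives $0$. Since $\overline V$ contains no asymptotic puncture either, this exhausts the possibilities and forces $V=\emptyset$, i.e.\ $\Image u\subset\psi^{-1}((-\infty,r])$.

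The main obstacle to watch is the boundary identity $d(\psi\circ u)(\nu)=0$: it is the precise geometric reason that the gradient-sectorial condition plays the role of the cylindrical-at-infinity condition for Liouville manifolds of finite type. Its verification hinges on the identification $Z_\psi=\nabla\psi/|\nabla\psi|^2$, so that $Z_\psi$-invariance of $L_i$ is equivalent to $\nabla\psi\in TL_i$, and on the Lagrangian property of $L_i$ paired with the tame almost-complex relation $du(\nu)=J\,du(\tau)$. Once this identity is secured, the rest of the proof is a direct transcription of \cite[Theorem 8.7]{oh:sectorial} from the end-profile function to the general pseudoconvex pair $(\psi,J)$.
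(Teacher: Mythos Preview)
Your proposal is correct and follows essentially the same approach as the paper's proof: both establish subharmonicity of $\psi\circ u$ from the pseudoconvex pair condition to rule out interior maxima, and both rule out boundary maxima by computing that the outward normal derivative $\partial_\nu(\psi\circ u)$ vanishes (using $Z_\psi$-invariance of $L_i$, the Lagrangian condition, and $du(\nu)=J\,du(\tau)$), which contradicts Hopf's lemma. Your write-up is in fact slightly more careful than the paper's in explicitly choosing $r$ large enough that both the pseudoconvexity region $M\setminus K$ and the gradient-sectorial region of each $L_i$ are reached, so that the subharmonicity and the boundary identity are guaranteed to hold on the set $V$ where the argument takes place.
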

\begin{proof}
Since a neighborhood of $\del_\infty M$ is exhausted by the family of compact subsets
$$
{\psi}^{-1}((-\infty,r])
$$
for $r \geq 0$, it is enough to prove \eqref{eq:confinement}
for some $r > 0$.
We first recall that $du$ is $J$-holomorphic and satisfies
$
- d(d {\psi}    \circ J) \geq 0
$
from the definition of pseudoconvex pair $( {\psi}  , J)$.
Since $u$ is $J$-holomorphic, we obtain
$$
d\left({\psi}   \circ u\right) \circ j = d{\psi}    \circ J \circ du
= u^*(d{\psi}    \circ J)
$$
By taking the differential of the equation, we derive
$$
-d\left(d\left({\psi}   \circ u\right) \circ j\right) = - u^*(d(d{\psi}    \circ J)) \geq 0.
$$
In particular, the function ${\psi}   \circ u$ is a subharmonic function and cannot carry an interior
maximum on $\RR \times [0,1]$ by the maximum principle.

Next we will show by the strong maximum principle that $u$ cannot have a boundary maximum
in a neighborhood of
$
\del_\infty M \cup \del M
$
either. This will then enable us to obtain a
$C^0$ confinement result
$$
\Image u \subset \{{\psi}   \leq r_0\}
$$
for any finite energy solution $u$ with fixed asymptotics given in \eqref{eq:unwrapped-structure-maps}
provided $r_0$ is sufficiently large.

Now suppose to the contrary that ${\psi}   \circ u$ has a boundary local maximum
point $z' \in \del D^2\setminus \{z_0,\ldots, z_k\}$. By the strong maximum principle, we must have
\eqn\label{eq:lambda(dudtheta)}
0 < \frac{\del}{\del \nu}({\psi} (u(z')))
= d{\psi}  \left(\frac{\del u}{\del \nu}(z')\right)
\eqnd
for the outward unit normal $\frac{\del}{\del \nu}|_{z'}$ of $\del \Sigma$,
unless ${\psi} \circ u$ is a constant function in which case there is nothing to prove.
Let $(r,\theta)$ be an isothermal coordinate of a neighborhood of $z' \in \del \Sigma$ in $(\Sigma,j)$
adapted to $\del \Sigma$, i.e., such that $\frac{\del}{\del \theta}$ is tangent to $\del \Sigma$ and
$|dz|^2 = (dr)^2 + (d\theta)^2$ for the complex coordinate $z = r+ i\theta$ and
\eqn\label{eq:normal-derivative}
\frac{\del}{\del \nu} = \frac{\del}{\del r}
\eqnd
along the boundary of $\Sigma$.
Since $u$ is $J$-holomorphic, we also have
$$
\frac{\del u}{\del r} + J \frac{\del u}{\del \theta} = 0.
$$
Therefore we derive
$$
d{\psi}  \left(\frac{\del u}{\del \nu}(z')\right)
= d{\psi}   \left(-J \frac{\del u}{\del \theta}(z')\right).
$$
By the ${\psi}  $-gradient sectoriality of $L$ and the boundary condition $u(\del \Sigma) \subset L$,
both $Z_{{\psi} }(u(z'))$ and $\frac{\del u}{\del \theta}(z')$ are contained in
$T_{u(z')}L$, which is a $d\alpha$-Lagrangian subspace. Therefore we have
\beastar
0 & = & d\alpha\left(Z_{{\psi} }(u(z')),\frac{\del u}{\del \theta}(z')\right)
= d\alpha\left(Z_{{\psi} }(u(z')), J \frac{\del u}{\del \nu}(z')\right)\\
& = & g_J\left(Z_{{\psi} }(z'), \frac{\del u}{\del \nu}(z')\right)
=\frac{1}{|Z_{\psi }(u(z'))|^2} d \psi \left(\frac{\del u}{\del \nu} (z')\right)
\eeastar
where the last equality follows from the definition of normalized gradient vector field $Z_{\psi }$.
This is a contradiction to \eqref{eq:lambda(dudtheta)} (unless $\psi \circ u$ is constant) and hence
the function ${\psi} \circ u$ cannot have a boundary maximum either.
This then implies
$$
\max {\psi \circ u} \leq \max \{{\psi} (p_i) \mid i=0,\ldots, k \}
$$
By setting
$$
r_0 = \max \{{\psi} (p_i) \mid i=0,\ldots, k \} + 1,
$$
we have finished the proof.
\end{proof}
We remark that the constant $ \max \{{\psi} (p_i) \mid i=0,\ldots, k \}$ (and so $r_0$)
depends only on the intersection set
$$
\bigcup_{i=0}^k L_i \cap L_{i+1}
$$
and not on the maps $u$ itself satisfying \eqref{eq:unwrapped-structure-maps}.

\section{Definition of Fukaya category of $(M,\omega)$}

We choose a countable collection of
properly embedded gradient-sectorial exact Lagrangian branes $\CL = \{L_i\}$.

In the study of (unwrapped) Fukaya category, we consider a disc $D^2$ with a finite number of boundary marked points $z_i \in \del D^2$ equipped with strip-like coordinates $(\tau,t)$ (or on the sphere $S^2$) with a finite number of marked points.
We denote by $\overline{z_iz_{i+1}}$ the arc-segment between $z_i$ and $z_{i+1}$, and $\tau = \infty_i$
the infinity in the strip-like coordinates at $z_i$.

Let $L_0, \ldots, L_k$ be a $(k+1)$-tuple of gradient-sectorial Lagrangian branes. We
denote
$$
\dot \Sigma = D^2 \setminus \{z_0, \ldots, z_k\}
$$
and equip $\Sigma$ with strip-like coordinates $(\tau,t)$
with $\pm \tau \in [0,\infty)$ and $t \in [0,1]$ near each $z_i$.

Then for a given collection of intersection points $p_i \in L_i\cap L_{i+1}$ for $i = 0, \ldots, k$,
we wish to study maps $u: \Sigma \to \Mliou$ satisfying the Cauchy-Riemann equation
\eqn
\begin{cases}
\overline \partial_J u = 0\\
u(\overline{z_iz_{i+1}}) \subset L_i \quad & i = 0, \ldots, k\\
u(\infty_i,t) = p_i, \quad & i = 0, \ldots, k.
\end{cases}
\eqnd

\subsection{Floer cochain complex}
\label{sec:chaincomplex}
In this subsection, we will describe construction of the boundary
map. Let $(L_i,\gamma_i)$ $i=0,1$ be a pair of exact sectorial Lagrangians.

Let $p, \, q \in L_0 \cap L_1$.

\begin{defn}
$CF(L_0,L_1)$
is a free $R$ module over the basis $p$
where $p \in L_0\cap L_1$ is an intersection point.
\end{defn}

We next take a grading to $L_i$.
It induces a grading of $p$, which gives the
graded structure on $CF(L_0,L_1)$
$$
CF(L_0,L_1) = \bigoplus_k CF^k(L_0,L_1)
$$
where
$
CF^k(L_0,L_1) = \operatorname{span}_R\{p\mid
\mu(p) = k\}.
$

Orientations of the Floer moduli space $\CM(p,q)$ obtain a system of integers
$n(p,q) = \#(\CM(p,q))$
whenever the dimension of $\CM(p,q)$ is zero.
Finally we define the Floer `boundary' map $\partial : CF(L_0,L_1) \to
CF(L_0,L_1)$ by the sum
\begin{equation}\label{eq:boundary}
\partial \langle p \rangle  = \sum_{q \in L_0\cap L_1}
n(p,q) \langle q \rangle .
\end{equation}

\begin{defn}\label{efilt}
We define the {\it energy filtration}
$
F^{\lambda}CF(L_0,L_1)
$
of the Floer chain complex
$CF(L_0,L_1)$ (here $\lambda \in \R$)
such that $p$ is in $F^{\lambda}CF(L_0,L_1)$
if and only if $\mathcal A(p) \ge \lambda$.
\end{defn}

It is easy to see the following from the definition of $\partial$ above:
\begin{lemma}\label{filtpres}
$$
\partial \left(F^{\lambda}CF(L_0,L_1)
\subseteq F^{\lambda}CF(L_0,L_1)\right).
$$
\end{lemma}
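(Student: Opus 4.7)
The plan is to read off the desired containment directly from the action-energy identity \eqref{eq:topenergy} applied to Floer strips, making essential use of the fact that the relevant set-up in Section \ref{sec:chaincomplex} is the unwrapped Fukaya category ($H = 0$). First, I would unwind Definition \ref{efilt}: to say $\langle p\rangle \in F^\lambda CF(L_0,L_1)$ is to say $\mathcal A(p) \geq \lambda$, and by \eqref{eq:boundary} the element $\partial\langle p \rangle$ is a finite $R$-linear combination of generators $\langle q\rangle$ with $n(p,q) \neq 0$. Since $F^\lambda CF(L_0,L_1)$ is an $R$-submodule, it is enough to show that every such $q$ still satisfies $\mathcal A(q) \geq \lambda$, and in fact the stronger statement $\mathcal A(q) \geq \mathcal A(p)$.

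Second, for such a $q$ I would pick any element $u \in \mathcal M(p,q)$, which is an honest $J$-holomorphic strip $u \colon \mathbb R \times [0,1] \to M$ with boundary on $L_0 \cup L_1$ and with asymptotes $p$, $q$ at the two ends. Because $H = 0$, the term $dH \wedge \alpha$ vanishes and the energy inequality \eqref{eq:energyineq} collapses to the equality $E^{geom}(u) = E^{top}(u)$. Specializing the topological-energy identity \eqref{eq:topenergy} to a two-ended strip and applying the convention implicit in \eqref{eq:boundary} (namely, that strips in $\mathcal M(p,q)$ are oriented from the input $p$ to the output $q$ in the direction of increasing action) yields
\begin{equation*}
0 \;\leq\; E^{geom}(u) \;=\; E^{top}(u) \;=\; \mathcal A(q) - \mathcal A(p).
\end{equation*}
Hence $\mathcal A(q) \geq \mathcal A(p) \geq \lambda$, so $\langle q \rangle \in F^\lambda CF(L_0,L_1)$, and the lemma follows by $R$-linearity of $\partial$.

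The only genuine bookkeeping point, and in this elementary setting effectively the entire content of the proof, is to match the orientation convention used to define $\mathcal M(p,q)$ and $n(p,q)$ in \eqref{eq:boundary} with the sign appearing in \eqref{eq:topenergy}; this is a standard convention check and presents no substantive obstacle. All the analytical inputs needed — nonnegativity of $E^{geom}$ for $J$-holomorphic maps and the action-energy identity — were already set up in Section \ref{sec:maximum-principle}, so no further confinement or compactness argument is required here.
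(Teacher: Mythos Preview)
Your argument is correct and is precisely the standard energy--action computation the paper has in mind: the paper does not actually write out a proof, stating only that the lemma ``is easy to see \ldots\ from the definition of $\partial$ above,'' and your derivation via $0 \le E^{geom}(u) = E^{top}(u) = \mathcal A(q) - \mathcal A(p)$ supplies exactly those omitted details. There is nothing to add beyond your own remark about matching the input/output convention of $\mathcal M(p,q)$ with the sign in \eqref{eq:topenergy}.
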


\subsection{Grading and orientations}

Let a standard surface $(M,\omega)$ be given.
For the $\ZZ_2$-grading, start with assigning orientation on each Lagrangians.
On each transversal intersection $x$ of two Lagrangians $L_0$ and $L_1$, we
pick a trivialization near $x$ such that
$$
T_xL_0\cong \RR, \quad T_xL_1\cong i\RR.
$$
We compare the orientation of this trivialization with that of $M$:
We set $\text{\rm deg}(x)=0$ if they agree and $\text{\rm deg}(x)=1$ otherwise.
For the $\ZZ$-grading, we will use Seidel's absolute grading
from \cite{seidel:graded} a brief explanation of how it goes is now in order.

In our two dimensional case, we use the following facts:
\begin{itemize}
\item Lagrangian Grassmannian $\Lambda(TM)$ of $(M,\omega)$ admits a fiberwise universal covering $\widetilde{\Lambda}(TM)$ if and only if $2c_1(M,\omega)$ goes to zero in $H^2(M;\ZZ)$. (See \cite[Lemma 2.2]{seidel:graded}.)
\item Any open 2-dimensional surface is homotopy equivalent to 1-dimensional
CW complex and so $c_1(TM) = 0$ in $H^2(M;\ZZ)$ by dimensional reason.
\end{itemize}

For each given Lagrangian submanifold $L\subset M$, we have a natural section
$$
s_L:L\rightarrow \Lambda(TM)|_L; \quad  s_L(x): =TL_x\in \Lambda(T_xM,\omega_x).
$$
We have a lift $\tilde{L}:L\rightarrow \Lambda(TM)$ of $s_L$ and the pair $(L,\tilde{L})$
is called a graded Lagrangian submanifold.

Let a pair of graded Lagrangians $(L_0,\tilde{L}_0)$ and $(L_1,\tilde{L}_1)$
which intersect transversally be given.
Pick any point $x\in L_0\cap L_1$ and choose two paths $\tilde{\lambda}_0,\tilde{\lambda}_1:[0,1]\rightarrow \widetilde{LGr}(T_xM)$ with
$\tilde{\lambda}_0(0)=\tilde{\lambda}_1(0)$, $\tilde{\lambda}_0(1)=\tilde{L}_0$ and
$\tilde{\lambda}_1(1)=\tilde{L}_1$. Let $\lambda_0,\lambda_1$ be the projections of these paths to $\Lambda(T_xM)$ and $\mu(\lambda_0,\lambda_1)$ be the Maslov index for paths.
We assign to $x$ an absolute index
\begin{equation}
\tilde{I}(\tilde{L}_0,\tilde{L}_1;x):=\frac{1}{2}-\mu(\lambda_0,\lambda_1)
\end{equation}
following \cite{seidel:graded}.

\subsection{Products}\label{subsec:prod}
Let $\frak L = (L_0, L_1, \cdots, L_k)$ be a
chain of compact Lagrangian submanifolds in $(M,\omega)$
that intersect pairwise transversely without triple intersections.
\par
Let $\vec z = (z_0,z_1,\cdots,z_k)$ be a set of distinct points on $\partial D^2
= \{ z\in \C \mid \vert z\vert = 1\}$. We assume that
they respect the counter-clockwise cyclic order of $\partial D^2$.
The group $PSL(2;\R)\cong \operatorname{Aut}(D^2)$ acts on the set
in an obvious way. We denote by $\mathcal M^{\text{main},\circ}_{2+1}$ be
the set of $PSL(2;\R)$-orbits of $(D^2,\vec z)$.
\par
In this subsection, we consider only the case $k \geq 2$
since the case $k=1$ is already discussed in the last subsection.
In this case there is no automorphism on the domain $(D^2, \vec z)$, i.e.,
$PSL(2;\R)$ acts freely on the set of such $(D^2, \vec z)$'s.
\par
Let
$p_{j} \in L_j \cap L_{j-1}$
($j = 0,\cdots ,k$), be a set of intersection points.
\par
We consider the pair $(w;\vec z)$ where $w: D^2 \to M$ is a
pseudo-holomorphic map that satisfies the boundary condition
\begin{subequations}\label{54.15}
\begin{eqnarray}
&w(\overline{z_{i}z_{i+1}}) \subset L_i, \label{54.15.1} \\
&w(z_{i}) = p_{i}\in L_i \cap L_{i+1}. \label{54.15.2}
\end{eqnarray}
\end{subequations}
We denote by $\widetilde{\CM}^{\circ}(\frak L, \vec p)$
the set of such
$((D^2,\vec z),w)$.
\par
We identify two elements $((D^2,\vec z),w)$, $((D^2,\vec z'),w')$
if there exists $\psi \in PSL(2;\R)$ such that
$w \circ \psi = w'$ and $\psi(z'_{j}) = z_{j}$.
Let ${\CM}^{\circ}(\frak L, \vec p)$ be the set of equivalence classes.
We compactify it by including the configurations with disc or sphere bubbles
attached, and denote it by ${\CM}(\frak L, \vec p)$.
Its element is denoted by $((\Sigma,\vec z),w)$ where
$\Sigma$ is a genus zero bordered Riemann surface with one boundary
components, $\vec z$ are boundary marked points, and
$w : (\Sigma,\partial\Sigma) \to (M,L)$ is a bordered stable map.
\par
We can decompose $\CM(\frak L, \vec p)$ according to the homotopy
class $B \in \pi_2(\frak L,\vec p)$ of continuous maps satisfying
\eqref{54.15.1}, \eqref{54.15.2} into the union
$$
\CM(\frak L, \vec p) = \bigcup_{B \in \pi_2(\frak L;\vec p)}
\CM(\frak L, \vec p;B).
$$
\par
In the case we fix an anchor $\gamma_i$ to each of $L_i$ and put $\CE =
((L_0,\gamma_0),\cdots,(L_k,\gamma_k))$, we consider only
admissible classes $B$ and put
\par
$$
\CM(\CE, \vec p) = \bigcup_{B \in \pi_2^{ad}(\CE;\vec p)}
\CM(\CE, \vec p;B).
$$
\begin{theorem}\label{58.21} Let $\frak L
= (L_0,\cdots,L_k)$ be a chain of
Lagrangian submanifolds and
$B \in \pi_2(\frak L;\vec p)$.
Then $\CM(\frak L, \vec p;B)$ is a smooth manifold
(with boundary and corners) of (virtual) dimension given by
\begin{equation}\label{dimensionformula}
\dim \CM(\frak L, \vec p;B) = \mu(\frak L,\vec p;B) + n + k-2,
\end{equation}
where $\mu(\frak L,\vec p;B)$ is the polygonal Maslov index of $B$.
\end{theorem}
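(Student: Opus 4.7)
The plan is to compute the virtual dimension via the standard two-step procedure: first compute the Fredholm index of the linearized $\bar\partial$-operator at a solution $w$, then add the dimension of the moduli of disc domains with boundary marked points. At a pseudo-holomorphic representative $((\Sigma,\vec z),w)$, consider the linearization $D_w\bar\partial$ acting on appropriate weighted Sobolev sections of $w^*TM$ whose boundary values along $\overline{z_iz_{i+1}}$ lie in the totally real subbundle $w^*TL_i$ and which decay exponentially to $0$ at each puncture $z_i$ (since the asymptotic limits $p_i$ are fixed transverse intersections, these decay rates are supplied by the spectral gap of the asymptotic operators associated to $T_{p_i}L_{i-1},\,T_{p_i}L_i$).

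Apply the Riemann--Roch theorem for Cauchy--Riemann operators on a disc with totally real boundary conditions having prescribed corner jumps. The Maslov-type index of such a boundary condition, expressed relative to the paths of Lagrangian subspaces used to define the absolute grading $\tilde I(\tilde L_{i-1},\tilde L_i;p_i)$ in Subsection 5.3, is precisely the polygonal Maslov index $\mu(\frak L,\vec p;B)$. Riemann--Roch then gives
\begin{equation*}
\operatorname{index} D_w\bar\partial \;=\; n\,\chi(D^2) + \mu(\frak L,\vec p;B) \;=\; n + \mu(\frak L,\vec p;B),
\end{equation*}
where $n$ is the (real) dimension of the Lagrangians. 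Next, the moduli of discs with $(k+1)$ ordered boundary marked points, modulo the $3$-dimensional action of $PSL(2;\R)$, contributes $(k+1)-3 = k-2$ free parameters. Summing yields the claimed formula $\mu(\frak L,\vec p;B)+n+k-2$.

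To promote this index count to a smooth manifold structure, we invoke the standard transversality argument: by perturbing the almost complex structure $J$ in a generic domain-dependent fashion within the contractible space $\cJ_\omega$ (or equivalently perturbing each $L_i$ by Hamiltonian isotopies of compact support away from the asymptotes, which is compatible with the gradient-sectorial condition by choosing the isotopies to be trivial in $\psi^{-1}([r_0,\infty))$), we arrange that $D_w\bar\partial$ is surjective at every element of $\CM(\frak L,\vec p;B)$. The resulting universal moduli is cut out transversely and projects to a smooth Banach manifold of the stated dimension. Gluing analysis at the boundary strata (bubbling of discs at the marked points or breaking into lower polygons) provides the corners-with-boundary structure.

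The main obstacle is not the local index count — which is the classical Riemann--Roch calculation — but ensuring the moduli is well-defined and compact in our noncompact setting. Here the $C^0$-confinement afforded by Theorem 6.1 (pseudoconvex pair $(\psi,J)$ combined with $\psi$-gradient-sectoriality of the $L_i$) rules out escape of sequences of maps to infinity, reducing compactness to the standard Gromov compactness on $\psi^{-1}((-\infty,r_0])$ for $r_0$ depending only on $\{{\psi}(p_i)\}$. Combined with exactness of $\omega$ on $M$ and of each $L_i$, which excludes sphere and disc bubbles of positive energy by the energy identity~\eqref{eq:topenergy} applied to $H\equiv 0$, we obtain the compactification $\CM(\frak L,\vec p;B)$ as a compact manifold with corners of the asserted dimension.
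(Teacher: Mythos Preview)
The paper does not supply a proof of this theorem; it is stated without argument (the label ``58.21'' and the surrounding prose suggest it is being quoted as a standard result from the Floer-theoretic literature, likely from Fukaya--Oh--Ohta--Ono or Seidel's book). Your proof proposal reconstructs the standard argument correctly: Riemann--Roch for the linearized operator with totally real boundary conditions and corner jumps gives index $n+\mu(\frak L,\vec p;B)$, the domain moduli contributes $(k+1)-3=k-2$, generic perturbation of $J$ (or Hamiltonian perturbation of the $L_i$) yields transversality, and gluing gives the corner structure.

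Your final paragraph is the part most specific to this paper's setting, and it is well-placed: the only nonstandard issue is $C^0$-control on a noncompact $M$ of infinite type, and you correctly invoke the paper's Theorem~\ref{thm:pseudoconvex} (maximum principle for the pseudoconvex pair $(\psi,J)$ against gradient-sectorial boundary conditions) to confine images to a fixed sublevel set. Combined with exactness of $\omega$ and of the $L_i$, which rules out sphere and disc bubbles, Gromov compactness then applies on the compact region $\psi^{-1}((-\infty,r_0])$. This is exactly the logic the paper relies on elsewhere (e.g.\ in the proof of Lemma~\ref{lem:sol_infty}), so your argument is consistent with the paper's framework even though the paper itself omits the proof here.
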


\subsection{$A_\infty$-structure}

Under the same assumption as the above subsection, now consider solution $u$ of (\ref{eq:cr}). We may assume that each $x^i$ lies in $M^{\leq l}$. Suppose $u$ intersects $\partial M^{\leq l}$. Then we can apply Theorem \ref{thm:pseudoconvex} to the part of $u$ that gets mapped to $M\setminus M^{\leq l}$ and conclude that $u$ does not meet the interior of $M\setminus M^{\leq l}$. To remove transversality condition, we can to the same on little bit bigger component, say $M^{\leq l+\epsilon}$. $H$ is still linear on that boundary, although we may need some rescaling.
Therefore, every solution is contained in some compact Liouville subdomain of $M$ and we can regard them as solutions in $\widehat{M^{\leq l}}$. We can explicitly describe solutions out of $M^{\leq l}$ and compactness and transversality of moduli space can be derived as we did on Liouville domain.

Using this, we can also show $A_{\infty}$-associativity.

\begin{lemma}\label{lem:sol_infty}The maps $\mu^1,\mu^2,\dots$ satisfy the following $A_{\infty}$-associativity equations.
\begin{equation}\label{eq:a_infty asso}
{
\begin{split}
\sum_{d_++d_-=d+1}(-1)^{\overline{deg}(\gamma_1)+\dots+\overline{deg}(\gamma_i)}\mu^{d_+}(\gamma_d,\dots,\gamma_{i+d_-+1},\\
\mu^{d_-}(\gamma_{i+d_-},\dots,\gamma_{i+1}),\gamma_i,\dots,\gamma_1)=0
\end{split}
}
\end{equation}
\end{lemma}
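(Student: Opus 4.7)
The plan is to derive the $A_\infty$-associativity relations by the standard strategy of analyzing the codimension-one boundary strata of the compactified moduli spaces $\CM(\frak L, \vec p; B)$ of dimension one, and combining this with the confinement technique developed earlier in the paper to reduce the problem to one taking place inside a compact Liouville subdomain.

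First, I would fix a chain $\frak L = (L_0, \ldots, L_d)$ of gradient-sectorial Lagrangians and intersection points $\gamma_i \in L_{i-1}\cap L_i$, and consider a class $B$ for which $\dim \CM(\frak L, \vec p; B) = 1$ according to the dimension formula \eqref{dimensionformula}. The $C^0$-confinement step is the crucial use of the infinite-type geometry: since all asymptotes $\gamma_i$ lie in some $M^{\leq \ell}$, Theorem \ref{thm:pseudoconvex} (applied to the pseudoconvex pair $(\psi,J)$ with the subharmonicity of $\psi\circ u$ and the gradient-sectoriality of each $L_i$) forces $\Image(u) \subset M^{\leq \ell'}$ for some $\ell' \geq \ell$ depending only on $\max_i \psi(\gamma_i)$ and not on $u$. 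Up to rescaling near the boundary of $M^{\leq \ell'}$ to maintain linearity at infinity, every solution may therefore be viewed as a solution in the Liouville completion $\widehat{M^{\leq \ell'}}$, where the standard finite-type Fukaya package applies.

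Next I would invoke the Gromov compactness and gluing theorems for pseudoholomorphic polygons in the compact setting: the compactified moduli space $\overline{\CM}(\frak L, \vec p; B)$ is a compact topological manifold with boundary and corners, and for generic choice of $J$ (within $\CJ_\CT$) it is transversely cut out. The codimension-one boundary of the 1-dimensional component consists precisely of two-level broken configurations where a single edge degenerates, that is, of fiber products
\[
\overline{\CM}(L_{i+d_-},\ldots,L_i; \gamma_{i+d_-},\ldots,\gamma_{i+1}; B_-) \times \overline{\CM}(L_d,\ldots,L_{i+d_-},L_i,\ldots,L_0; \gamma_d,\ldots,\mu^{d_-}(\cdots),\ldots,\gamma_0; B_+)
\]
ranging over decompositions $d_+ + d_- = d+1$ and $B_+ + B_- = B$, with no disk or sphere bubbling occurring in the interior by monotonicity/exactness (since we are in an exact setting with exact Lagrangians, sphere and disk bubbles carry zero symplectic area and hence are constant, so they do not contribute).

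Finally, counting the oriented boundary points of the 1-dimensional moduli space gives zero, and a careful bookkeeping of Koszul signs following Seidel's convention together with the grading recalled in Section 6.2 produces exactly the signed sum in \eqref{eq:a_infty asso}. The main obstacle, and the part where the novelty of the infinite-type setting enters, is the confinement step: without the pseudoconvex pair $(\psi, J)$ and the gradient-sectoriality assumption on the $L_i$, one cannot \emph{a priori} restrict solutions to a compact subdomain, and the entire finite-type compactness/gluing machinery fails to apply. Once the confinement is in place, however, the argument is formally identical to the finite-type case and yields \eqref{eq:a_infty asso}.
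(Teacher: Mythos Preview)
Your high-level strategy---confine the one-dimensional moduli space of $(d+1)$-gons to a compact subdomain via Theorem \ref{thm:pseudoconvex}, then invoke the standard compactness/gluing package for Liouville domains---is sound and is in fact more robust than the argument the paper gives (it does not use anything special about dimension two). However, you gloss over precisely the point that the paper singles out as the only nontrivial issue.

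The $A_\infty$ relation \eqref{eq:a_infty asso} is an \emph{algebraic} identity: for fixed $\gamma_0,\ldots,\gamma_d$ it is a sum over all intermediate chords $x' \in L_i \cap L_{i+d_-}$ in $M$, not just those in $M^{\leq \ell'}$. Your confinement bounds the image of every unbroken $(d+1)$-gon, and hence every broken configuration arising as a Gromov limit of such; but it does not immediately rule out a pair $(u_-,u_+)$ of rigid disks breaking at some $x'$ with $\psi(x') > \ell'$ that contributes to the algebraic sum without appearing as a boundary point of the one-dimensional moduli space you compactified. You need one more sentence: any such pair glues (in some larger $\widehat{M^{\leq \ell''}}$) to a one-parameter family of $(d+1)$-gons with asymptotics $\gamma_0,\ldots,\gamma_d$, and confinement applied to that family forces its closure---in particular $x'$---back into $M^{\leq \ell'}$, a contradiction. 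With this addition your proof is complete.

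The paper's own proof takes a different, surface-specific route to the same conclusion. Rather than invoking gluing, it argues combinatorially: each nonzero term in \eqref{eq:a_infty asso} corresponds to an immersed polygon in $M$ with one nonconvex corner at $x'$, and the boundary arc of that polygon through $x'$ runs along a single Lagrangian between two of the fixed inputs $\gamma_j,\gamma_{j+1}$. For $x'$ to lie outside $M^{\leq k}$ that arc would have to exit $M^{\leq k}$, turn around, and return---which on a surface forces it to pass through a saddle of $\psi$, contradicting the standing hypothesis that the branes avoid $\Crit\psi$ at infinity. This is shorter but genuinely two-dimensional; your approach, once patched, would survive to higher-dimensional Weinstein manifolds of infinite type.
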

\begin{proof}Fix inputs $\gamma_1,\dots,\gamma_d$ and consider coefficient of a chord $\gamma_0$ in the LHS of (\ref{eq:a_infty asso}). Then there exists $k$ such that every $\gamma_i$ is contained in $M^{\leq k}$. However, it cannot guarantee equations since we do not know about output of $\mu^d_-(\gamma_{i+d_-},\dots,\gamma_{i+1})$, say $x'$, and directly apply the result of this subsection. However, recall that each contribution to $A_{\infty}$-associativity equation comes from a polygon with one nonconvex corner. In that polygon, $x'$ should be located between two chords $\gamma_j,\gamma_{j+1}$ in a boundary Lagrangian of the polygon. The only possible case where $x'$ is out of $M^{\leq k}$ is the Lagrangian between $\gamma_j$ and $\gamma_{j+1}$ goes out $M^{\leq k}$ and meet saddle point and come back. However, such Lagrangian cannot be a boundary of polygon and that is not the case. Therefore, $x'$ is contained in $M^{\leq k}$ and we may regard it as $A_{\infty}$-associativity on $M^{\leq k}$, so the coefficient for $\gamma_0$ would be 0. We can do this for any output chords and we are done.
\end{proof}

By the Lemma \ref{lem:sol_infty}, our $\mu^k$ on wrapped Fukaya category of infinite type surface satisfies $A_\infty$-equation. And we regard it as an $A_\infty$-category.
Therefore, what we have done so far in this section can be written as the following theorem.

\begin{theorem}
Let $M'$ be a standard surface with a pseudoconvex pair $(J,\psi)$.
Then we can define a Fukaya category $Fuk(M')$ whose objects are gradient sectorial Lagrangians.
\end{theorem}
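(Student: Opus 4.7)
The plan is to assemble the constructions of Sections 5--7 into a coherent $A_\infty$-category $Fuk(M')$ by specifying objects, morphism spaces, and higher operations, and then verifying the $A_\infty$-relations. Objects will be the $\psi$-gradient-sectorial Lagrangian branes of Definition \ref{defn:gradient-sectorial-Lagrangian-intro}, equipped with the Seidel grading (available since $c_1(TM')=0$ by dimension) and a choice of orientation. Morphism spaces $CF(L_0,L_1)$ are the Floer cochain complexes of Section 5.1, generated over $R$ by transverse intersection points -- after a generic compactly supported Hamiltonian perturbation of $L_1$ -- and graded by the absolute Maslov index of Section 5.2.

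Higher operations $\mu^k$ are defined as in Section 5.3: count rigid $J$-holomorphic polygons with boundary on the tuple $(L_0,\ldots,L_k)$ and prescribed asymptotic chords $p_j \in L_{j-1}\cap L_j$, summing over homotopy classes $B$ of virtual dimension zero as prescribed by the dimension formula of Theorem \ref{58.21}. The real work is to show each moduli space is compact, transversely cut out, and coherently oriented. For this, fix asymptotics $\vec p$ and apply Theorem \ref{thm:pseudoconvex} to the pseudoconvex pair $(\psi,J)$ and the gradient-sectorial boundary conditions $L_i$: every element $u$ satisfies
$$\Image u \subset \psi^{-1}\bigl((-\infty, r_0]\bigr) \subset M^{\leq n}$$
for some $n = n(\vec p) \in \Z_{>0}$ depending only on $\max_i \psi(p_i)$, not on $u$. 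Disk and sphere bubbles inherit the same maximum principle, so Gromov compactness, generic transversality (via a compactly supported domain-dependent perturbation of $J$), and orientation analysis reduce verbatim to the standard theory on the Liouville completion $\widehat{M^{\leq n}}$.

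The $A_\infty$-identities then follow from Lemma \ref{lem:sol_infty}: given fixed inputs $\gamma_1, \ldots, \gamma_d$, choose $n$ so that all $\gamma_i$ lie in $M^{\leq n}$; by the confinement above, every polygon contributing to either side of \eqref{eq:a_infty asso} is contained in $M^{\leq n}$, and the output chord $x'$ of any inner operation $\mu^{d_-}$ must itself lie in $M^{\leq n}$ -- as argued in the proof of Lemma \ref{lem:sol_infty}, the only way $x'$ could escape is if a boundary Lagrangian left $M^{\leq n}$, met a saddle of $\psi$, and returned, which cannot occur on an edge of a boundary polygon. Hence each $A_\infty$-relation reduces to the standard identity in $Fuk(\widehat{M^{\leq n}})$ and therefore holds in $Fuk(M')$.

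The main obstacle is ensuring that the reduction to $\widehat{M^{\leq n}}$ genuinely preserves all analytic ingredients required for the standard Fukaya package -- uniform injectivity radius, curvature bounds, control of bubbling near $\partial M^{\leq n}$, and coherent orientations across the cutoff -- uniformly in the asymptotic data. These are guaranteed by the tameness conditions built into the $\CT$-tame Weinstein structure, together with the fact that the gradient-sectoriality forces each $L_i$ to be cylindrical over $\psi^{-1}([r_0,\infty))$, so the finite-type Fukaya theory imports with only cosmetic modifications.
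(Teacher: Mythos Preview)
Your proposal is correct and follows essentially the same route as the paper: the theorem is stated as a summary of the constructions of Section 7, with the key analytic input being the $C^0$-confinement of Theorem \ref{thm:pseudoconvex} and the $A_\infty$-relations being verified via the reduction to the finite-type subdomain $\widehat{M^{\leq n}}$ in Lemma \ref{lem:sol_infty}, exactly as you outline. Your section numbers are slightly off (the Floer complex, grading, and products are in Subsections 7.1--7.3 rather than 5.1--5.3), but the content and logical structure match the paper's argument.
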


Now we have defined a Fukaya category on a standard surface. By the one-to-one
correspondence with hyperbolic Riemann surface structure and the standard surface representation of
the surface,
we derive that our definition of Fukaya category of infinite type surface
for given hyperbolic structure is well-defined.

\begin{theorem}
Let $(M_1,\omega_1)$, $(M_2,\omega_2)$ be orientable separable surfaces without boundary equipped with hyperbolic structure and associated symplectic forms respectively.
If $(M_1,\CT_1)$, $(M_2,\CT_2)$ are quasi-isometric, then
Fukaya categories $Fuk(M_1,\CT_1)$, $Fuk(M_2,\CT_2)$ are quasi-equivalent.
\end{theorem}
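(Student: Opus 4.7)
The plan is to reduce the statement to a comparison of the standard-surface representatives of the two quasi-isometry classes and then transport the Fukaya category across a symplectomorphism produced by the Greene--Shiohama-type result invoked in the introduction. First, by the construction in Section 3, each $(M_i,\CT_i)$ has a standard surface representative $(M_i',\omega_{\CT_i},J_i,\psi_i)$ equipped with a plurisubharmonic exhaustion and a pseudoconvex pair, and the definition of $Fuk(M_i,\CT_i)$ is implemented on this representative. Thus it suffices to produce a quasi-equivalence $Fuk(M_1') \simeq Fuk(M_2')$ given a quasi-isometry between $(M_1',\CT_1)$ and $(M_2',\CT_2)$.

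Next, I would invoke the strengthened Greene--Shiohama statement cited in the introduction to upgrade the quasi-isometry to a quasi-isometric symplectomorphism $\Phi\colon (M_1',\omega_{\CT_1}) \to (M_2',\omega_{\CT_2})$. Pushforward along $\Phi$ carries the pseudoconvex pair $(\psi_1,J_1)$ to a pseudoconvex pair $(\Phi_*\psi_1,\Phi_*J_1)$ on $M_2'$ and sends $\psi_1$-gradient-sectorial branes to $\Phi_*\psi_1$-gradient-sectorial ones (since gradient-sectoriality is an intrinsic condition pulled back to the definition of $Z_\psi$). Because $\Phi$ is a symplectomorphism, it identifies Lagrangian intersection sets and puts moduli spaces of $J_1$-holomorphic polygons with the source boundary data in bijection with moduli spaces of $\Phi_*J_1$-holomorphic polygons with the pushforward data; the standard transport-of-Floer-data argument assembles this into a strict $A_\infty$-isomorphism between $Fuk(M_1',J_1,\psi_1)$ and $Fuk(M_2',\Phi_*J_1,\Phi_*\psi_1)$.

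The remaining step, which I expect to be the main obstacle, is to show that the Fukaya category is independent of the auxiliary pseudoconvex pair inside the fixed quasi-isometry class $\CT_2$; concretely, that $Fuk(M_2',\Phi_*J_1,\Phi_*\psi_1)$ and $Fuk(M_2',J_2,\psi_2)$ are quasi-equivalent. My plan is a continuation argument along a smooth path of pseudoconvex pairs $(\psi_s,J_s)$ joining the two endpoints, exploiting the fact that $\CJ_{\CT_2}$ is $C^\infty$-contractible. The key input is the confinement statement of Theorem 4.1: for asymptotics fixed by the intersection data $\bigcup_i L_i \cap L_{i+1}$, any $J_s$-holomorphic polygon is trapped in a sublevel set of $\psi_s$ whose height depends only on $\max_i\psi_s(p_i)$, and a uniform bound over $s\in[0,1]$ follows from the finiteness of the intersection set together with the $C^2$-tameness of each $\psi_s$. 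Granted this uniform $C^0$ control, the standard continuation-map construction yields $A_\infty$-quasi-isomorphisms on Hom complexes and assembles into the desired quasi-equivalence. The principal difficulty lies in organizing the continuation so that the path $(\psi_s,J_s)$ can be chosen with the Liouville vector field simultaneously $\psi_s$-gradient-like for every $s$ on a common cylindrical end, and in verifying the transversality of the parametrized moduli while preserving the confinement: these two requirements interact nontrivially in the infinite-type setting and are the reason $\CT$ was promoted into the input data of the category in the first place.
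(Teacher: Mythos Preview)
Your proposal follows essentially the same approach as the paper: obtain a quasi-isometric symplectomorphism between the two surfaces (via the Greene--Shiohama-type result), transport the Floer data along it, and then invoke the contractibility of $\cJ_\CT$ to compare the two pseudoconvex pairs on a single underlying symplectic manifold. The paper's proof is considerably terser than yours---it simply says that contractibility of the $\CT$-tame almost complex structures ``finishes the proof as usual''---whereas you have spelled out the continuation argument and correctly flagged the technical points (uniform $C^0$-confinement along the path, compatibility of gradient-likeness with the deformation) that the paper leaves implicit.
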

\begin{proof}
We equip tame almost complex structures $J_i$ and $J_i$-plurisubharmonic function $\psi_i$
to define $Fuk(M_i,\CT_i)$ respectively for $i=1,\,2$. Let $\phi:M_1 \to M_2$ be
an quasi-isometric symplectormorphism such that $\phi^*\omega_2 = \omega_1=: \omega$.
We now reduced the equivalence problem to the problem of two choice of  two tame almost complex structures $J_1$ and $J_2$ on a symplectic manifold $(M_1,\omega_1)$ such that the associated
K\"ahler metric $g_1$ and $g_2$ are quasi-isometric.
Since the set of $\CT$-tame almost complex structure $\cJ_\omega$ tame to $\omega$ is contractible,
this finishes the proof as usual.
\end{proof}

\part{Calculations}

In this part, we give some concrete description of our Fukaya category $Fuk(M,\CT)$
using the standard surface representation of the surface $(M,\CT)$.

\section{Generation of the Fukaya category}

In this section, we provide a description of generating set of $Fuk(M,\CT)$.

We first recall the following standard definition of Liouville isomorphisms in general.

\begin{defn}\label{defn:Liouville-isomorphism}
A Liouville isomorphism between Liouville domains $(M_1,\theta_1)$, $(M_2,\theta_2)$ is a diffeomorphism
$\phi:\hat{M}_1\rightarrow \hat{M}_2$ between their completion $(\hat{M}_1,\hat{\theta}_1)$ and
$(\hat{M}_2,\hat{\theta}_2)$ satisfying $\phi^*\hat{\theta}_2=\hat{\theta}_1+df$ for some compactly supported $f$.
Such map is symplectic and compatible with the Liouville flow at infinity.

Two Lagrangians $L_1$, $L_2$ are Liouville isotopic if there is a smooth family of Liouville isomorphism $\{\phi_i\}_{i\in[0,1]}$ such that $\phi_0=id_M$ and $\phi_1(L_1)=L_2$.
\end{defn}

The following is the well-known standard fact.

\begin{lemma}
If $L_1$ and $L_2$ are Liouville isotopic, then $L_1,\, L_2$
are quasi-isomorphic as an element of $Fuk(M)$.
\end{lemma}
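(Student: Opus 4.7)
The plan is to upgrade the given Liouville isotopy $\{\phi_t\}_{t\in[0,1]}$ to a Hamiltonian isotopy whose generator is admissible for our $\psi$-wrapped Floer theory, and then conclude by a continuation map argument, taking care that the gradient-sectorial condition is preserved throughout the isotopy.

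First, I would differentiate the family of Liouville isomorphisms. Setting $X_t = \tfrac{d}{dt}\phi_t \circ \phi_t^{-1}$ and differentiating the identity $\phi_t^*\hat\theta = \hat\theta + df_t$ in $t$, Cartan's formula gives
\[
\iota_{X_t}\omega \;=\; d\bigl(\dot f_t - \iota_{X_t}\hat\theta\bigr),
\]
so that $\phi_t$ is the Hamiltonian flow of $H_t := \dot f_t - \iota_{X_t}\hat\theta$. Since $f_t$ is compactly supported and $\phi_t$ is compatible with the Liouville flow at infinity, outside some compact set $K$ (uniform in $t$) the function $H_t$ depends only on $\psi$, hence is an admissible wrapping-type Hamiltonian in the sense of Section \ref{sec:maximum-principle}.

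Second, I would check that $L_t := \phi_t(L_1)$ is a smooth family of gradient-sectorial Lagrangian branes. Choosing $r_0$ large enough so that $K \subset \psi^{-1}([0,r_0])$ and so that $L_1 \cap \psi^{-1}([r_0,\infty))$ is already $Z_\psi$-invariant, the fact that $\phi_t$ coincides on $\psi^{-1}([r_0,\infty))$ with a time-$t$ reparametrization of the Liouville flow (which by hypothesis is $\psi$-gradient-like on each cylindrical end) implies $L_t \cap \psi^{-1}([r_0,\infty))$ remains $Z_\psi$-invariant. Thus we obtain a uniformly gradient-sectorial path of branes.

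Third, for any test object $L_0$ I would construct the Floer continuation map
\[
\Phi : CF(L_0,L_1) \longrightarrow CF(L_0,L_2)
\]
by counting rigid solutions of the $s$-dependent Cauchy-Riemann equation~\eqref{eq:cr} with moving boundary condition $L_{\beta(s)}$, where $\beta$ is a monotone cutoff from $0$ to $1$, perturbed by the corresponding $s$-dependent Hamiltonian built from $H_t$. Because the moving Lagrangian is uniformly gradient-sectorial and the perturbation is of the form $\rho(\psi)$ outside a compact set, the strong maximum principle of Theorem \ref{thm:pseudoconvex} and the energy inequality~\eqref{eq:energyineq} give the uniform $C^0$-confinement and energy bounds needed for compactness of the continuation moduli. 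The reverse isotopy $\{\phi_{1-t}\}$ gives a continuation map in the opposite direction, and the standard homotopy-of-homotopies argument (concatenating a path with its reverse is homotopic through paths to the constant path) yields chain homotopies identifying both compositions with the identity up to $\mu^1$-boundary. Doing the construction naturally in $L_0$ assembles $\Phi$ into a closed degree-zero element of $\mathrm{Mor}_{Fuk(M)}(L_1,L_2)$ with an acyclic cone, i.e., a quasi-isomorphism in $Fuk(M)$.

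The main obstacle will be the third step: verifying that the Floer-theoretic machinery (transversality, compactness, and gluing for the continuation moduli) goes through uniformly in the isotopy parameter in the infinite-type setting, and in particular that no family of continuation trajectories escapes to infinity as the boundary condition moves. This reduces to a parametrized version of Theorem \ref{thm:pseudoconvex}, which is available because the family $(L_t, H_t)$ is uniformly gradient-sectorial and admissible by the construction above.
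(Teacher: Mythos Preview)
Your argument is correct and follows essentially the same continuation/time-reversal strategy as the paper's proof, just spelled out in more analytic detail; the paper simply invokes the $A_\infty$ homotopy $\mathfrak{n}_{\{\phi^t\}}$ induced by the isotopy and notes that the time-reversal isotopy exhibits the pushforward $\phi_*:Fuk(M)\to Fuk(M)$ as a quasi-equivalence, hence $L_1\simeq L_2$. One minor remark: your claim that $H_t$ literally depends only on $\psi$ outside a compact set is stronger than what Liouville compatibility at infinity actually gives you, but what you really need (and what does follow) is that the moving family $L_t$ is uniformly gradient-sectorial outside a fixed compact set, which is exactly what feeds into the maximum principle of Theorem~\ref{thm:pseudoconvex}.
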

\begin{proof} By definition, we have an isotopy $\phi^t$ of Liouville isomorphism with
$L_2 = \phi^1(L_1)$ and $\phi^0 = id$. For any test Lagrangian $(K_0,\cdots, K_\ell)$, the isotopy induces an
$A_\infty$ homotopy
$$
\mathfrak{n}_{\{\phi^t\}}: CF(L_1,K_1,\cdots, K_\ell) \to CF(L_2,K_0).
$$
By considering the time-reversal isotopy, we derive that
the pushforward $\phi_*: Fuk(M) \to Fuk(M)$ is a quasi-isomorphism
and so $L_1$ and $L_2$ are quasi-isomorphic.
\end{proof}

For our current purpose, we need to introduce
the notion of a Liouville isomorphism between standard Weinstein surfaces.
\begin{defn}\label{defn:isomorphism-standard-surface}
A Liouville isomorphism between standard Weinstein surfaces $(M_1, \psi_1)$, $(M_2,\psi_2)$
is a Liouville
diffeomorphism $\phi:M_1\to M_2$ that satisfies the following:
\begin{enumerate}
\item There exists $\ell_0 \in \N$ such that $\phi({M_1}^{\leq k}) \subset M_2^{\leq{k+\ell_0}}$  for all $k\in \ZZ_{>0}$.
\item There is some  $n_0 \in \ZZ_{>0}$ such that $\phi$ intertwines  gradient trajectories of $\psi_1$
and $\psi_2$ on $M_1 \setminus {M_1}^{\leq n}$.
\end{enumerate}
\end{defn}

Note that a Liouville isomorphism between finite type standard surfaces $M_1, \, M_2$ restricts to
 a Liouville isomorphism between Liouville subdomains thereof by definition.

\subsection{The ideal boundary of gradient sectorial Lagrangian brane}

In this subsection, we study and classify the behaviour of gradient-sectorial Lagrangians near end.
Let a gradient-sectoral Lagrangian submanifold $L$ of a surface $M$ be given.
If $L$ is closed (i.e., compact without boundary),  there exists a $n\in \ZZ_{>0}$ such that $L\subset M^{\leq n}$ since $M^{\leq 1}\subset M^{\leq 2}\subset \dots$ being a compact exhaustion of $M$.

For open Lagrangian submanifolds, the following holds. First of all
we recall that the elements of our generating set do not pass through
the saddle critical set of $\psi$.

\begin{prop}\label{prop:grad-sect_lag_ends}
Let $L$ be a connected gradient-sectorial Lagrangian submanifold of $M$ such that
$L \cap M^{\geq k} \cap \Crit \psi = \emptyset$ for all sufficiently large $k$.
If $L$ is open, then there is $n\in \ZZ_{>0}$ such that $L \cap  M^{\geq n}$ only has
two connected components and they are $\psi$-gradient trajectories of a point in
$L \cap \partial M^{\leq n}$.
\end{prop}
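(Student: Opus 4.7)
The plan is to exploit the fact that a connected, boundaryless, open, properly embedded $1$-manifold is diffeomorphic to $\R$. Fixing such a parametrization $\gamma:\R\to M$ of $L$, I will study the scalar function $\phi:=\psi\circ\gamma:\R\to\R_{\geq 0}$. Since $\psi$ is exhausting on $M$ and $L$ is properly embedded, $\phi$ is proper; in particular $\phi(t)\to+\infty$ as $|t|\to\infty$, so both ends of $L$ escape to infinity through the $\psi$-exhaustion.

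Next I will pin down $\phi'$ on the set $S:=\{t\in\R : \phi(t)\geq r_0\}$, where $r_0$ is chosen large enough that both the sectorial tangency from Definition \ref{defn:gradient-sectorial-Lagrangian-intro} and the avoidance hypothesis $L\cap M^{\geq k}\cap\Crit\psi=\emptyset$ hold. On $S$ the gradient-sectorial condition gives $\gamma'(t)=c(t)\,Z_\psi(\gamma(t))$ for some continuous scalar $c$; the avoidance hypothesis makes $Z_\psi$ nonvanishing along $\gamma|_S$, and since $\gamma'$ itself never vanishes we obtain $c(t)\neq 0$ throughout $S$. Coupled with the normalization $d\psi(Z_\psi)\equiv 1$, this yields $\phi'(t)=c(t)\neq 0$ on $S$, so $\phi$ has nowhere-vanishing derivative wherever it exceeds $r_0$.

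Finally I will analyze the closed set $S\subset\R$. A bounded component $[a,b]\subset S$ with $a<b$ would force $\phi(a)=\phi(b)=r_0$, contradicting strict monotonicity of $\phi$ on $[a,b]$; a singleton $\{a\}$ is likewise excluded, since $\phi'(a)\neq 0$ would push $\phi$ strictly above $r_0$ on one side of $a$. Combined with $\phi\to+\infty$ at both ends, the only possibility is $S=(-\infty,b_0]\sqcup[a_0,+\infty)$ with $\phi$ strictly decreasing on the left tail and strictly increasing on the right. Choosing any $n\geq r_0$ meeting the critical-point avoidance hypothesis, one reads off that $L\cap M^{\geq n}=\gamma(\phi^{-1}([n,\infty)))$ has exactly two connected components, each a $Z_\psi$-orbit, hence a $\psi$-gradient trajectory of $L$ emanating from the unique point of $L\cap\partial M^{\leq n}$ on that tail.

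The main technical step is the identification $\phi'=c\neq 0$ on $S$ through the sectorial tangency together with the $Z_\psi$-normalization; once this is in place, the remaining bookkeeping (ruling out oscillations of $\phi$ across level $r_0$, matching up the two tails with the two topological ends of $L\cong\R$, and recognizing each tail as a $\psi$-gradient trajectory) is a formal consequence of properness and of $L$ being a connected $1$-manifold.
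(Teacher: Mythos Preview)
Your proof is correct and follows essentially the same approach as the paper: both exploit that $L\cong\R$, use the gradient-sectorial tangency together with the critical-point avoidance to rule out compact (bounded) pieces of $L\cap M^{\geq n}$, and invoke properness to see both ends escape to infinity. Your presentation via the scalar function $\phi=\psi\circ\gamma$ and the identity $\phi'=d\psi(Z_\psi)\cdot c=c\neq 0$ is somewhat more explicit than the paper's, and in particular makes the ``exactly two components'' conclusion more transparent than the paper's argument, which reaches ``finitely many, all noncompact'' and leaves the reduction to two implicit in $L\cong\R$.
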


\begin{proof}
By the definition of gradient-sectorial Lagrangian and our assumption on $L$, there is $k_0\in \ZZ_{>0}$ such that $\grad \psi$ is tangent to $L \cap M^{\geq k_0}$ and $L \cap M^{\geq k_0} \cap \Crit \psi = \emptyset$.
Since $M^{\leq 1}\subset M^{\leq 2}\subset \dots$ is a compact exhaustion of $M$,
 $L$ intersects $M^{\leq k}$ for all sufficiently large $k$, say $k \geq k_0$.

Since $L$ is connected and $L$ intersects $M^{\leq k}$, every connected component of
$L \cap M^{\geq k}$ contains a point in $\partial M^{\leq k}$.
Since $L$ is properly embedded and $\partial M^{\leq k}$ is compact, $L\cap \partial M^{\leq k}$ consists of finitely many points.
Therefore, there are finitely many connected components of $L \cap M^{\geq k}$.

There are two possibilities for each connected component of $L \cap M^{\geq k}$.
If the component is noncompact, it must be a complete gradient trajectory issued at the given
intersection point in $L \cap \del M^{\leq k}$.
If the component is compact, since $L$ does not cross critical point of $\psi$ in $M^{\geq k}$, the component
must be homeomophic to a closed interval with its two boundary points lying on the set
$\{\psi = k\}$ and hence must be a constant trajectory which means the point must be a critical
point of $\psi$, a contradiction to the hypothesis that $L \cap M^{\geq k} \cap \Crit \psi = \emptyset$
for all $k \geq k_0$. This finishes the proof.
\end{proof}
Recall that any connected Lagrangian brane
is equipped with orientation and is homeomorphic to $\R$ as an oriented manifold.

The above proposition enables us to define the following.

\begin{defn}[Ideal boundary of open gradient-sectorial Lagrangians]
\label{defn:ideal-bdy} Let $L$ be a gradient-sectorial Lagrangian brane.
\begin{enumerate}
\item
We call each connected component of a $\psi$-gradient trajectory of $L \cap M^{\geq n}$
an \emph{end} of $L$ in $(M,\CT)$.
\item
We call the unique ideal boundary point $p =\{P_1\supset P_2\supset \dots \}\in B(M)$ associated to the
$\psi$-gradient trajectory appearing in Proposition \ref{prop:grad-sect_lag_ends} an \emph{ideal boundary
point}. We denote by $\del_\infty L$ the set of ideal boundary points
$$
\del_\infty L = \{p, q\}
$$
where $p,q$ are the ideal boundary points at $\pm \infty$ of $L \cong \R$ respectively.
\item We define the asymptotic evaluation maps
\be\label{eq:ev+-}
\ev_\infty^L: \{\pm\infty\} \to B(M)
\ee
by $\ev_\infty^L(\infty) := p$, $\ev_{\infty}^L(-\infty) := q$ given above respectively.
\end{enumerate}
\end{defn}
If $p = q$, we call the point $p$ the \emph{double ideal boundary point} of $L$.

\subsection{Generation and classification of objects of $Fuk(M,\CT)$}

In this subsection, we want to make a classification of the objects of Fukaya category $Fuk(M,\CT)$,
especially those of open Lagrangian branes, which respects the behaviour of the ends of open Lagrangian submanifolds.

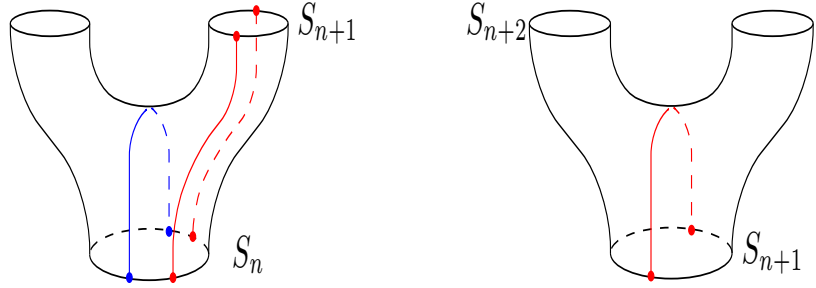
\begin{figure}
    \centering
    \subfloat{{
    \resizebox{5cm}{4cm}{
\begin{tikzpicture}
\draw[] (2.5,2.5) arc (0:360:0.5 and 0.1);
\draw[] (0,2.5) arc (0:360:0.5 and 0.1);

\draw[] (1.5,0.7) arc (0:-180:0.75 and 0.2);

\draw[dashed] (1.5,0.7) arc (0:180:0.75 and 0.2);

\draw[rounded corners=10pt] (1.5,2.5)-- ++(0,-0.5)--++(-0.75,-0.2)--++(-0.75,0.2)--++(0,0.5);
\draw[rounded corners=13pt] (1.5,0.7)-- ++(0,0.5)--++(1,0.8)--++(0,0.5);
\draw[rounded corners=13pt] (0,0.7)-- ++(0,0.5)--++(-1,0.8)--++(0,0.5);

\draw[blue,rounded corners=7pt] (0.5,0.52)-- ++(0,1.2)--++(0.2,0.1);
\draw[blue,dashed,rounded corners=7pt] (0.8,1.83)-- ++(0.2,-0.1)--++(0,-0.86);

\draw[red,rounded corners=7pt] (1.85,2.4)-- ++(0,-.5)--++(-0.5,-0.4)--++(-0.3,-0.5)--++(0,-0.5);
\draw[red,dashed,rounded corners=7pt] (2.1,2.6)-- ++(0,-0.7)--++(-0.5,-0.4)--++(-0.3,-0.4)--++(0,-0.2);

\draw[blue,fill=blue] (0.5,0.52) circle (1pt);
\draw[blue,fill=blue] (1,0.88) circle (1pt);
\draw[red,fill=red] (1.05,0.52) circle (1pt);
\draw[red,fill=red] (1.3,0.84) circle (1pt);

\draw[red,fill=red] (1.85,2.4) circle (1pt);
\draw[red,fill=red] (2.1,2.6) circle (1pt);

\node[] at (3,2.5) {$S_{n+1}$};
\node[] at (2,0.7) {$S_n$};

\end{tikzpicture}
    }
    }}
    \qquad
    \subfloat{{
    \resizebox{5cm}{4cm}{
\begin{tikzpicture}
\draw[] (2.5,2.5) arc (0:360:0.5 and 0.1);
\draw[] (0,2.5) arc (0:360:0.5 and 0.1);

\draw[] (1.5,0.7) arc (0:-180:0.75 and 0.2);
\draw[dashed] (1.5,0.7) arc (0:180:0.75 and 0.2);

\draw[rounded corners=10pt] (1.5,2.5)-- ++(0,-0.5)--++(-0.75,-0.2)--++(-0.75,0.2)--++(0,0.5);

\draw[rounded corners=13pt] (1.5,0.7)-- ++(0,0.5)--++(1,0.8)--++(0,0.5);
\draw[rounded corners=13pt] (0,0.7)-- ++(0,0.5)--++(-1,0.8)--++(0,0.5);

\draw[red,rounded corners=7pt] (0.5,0.52)-- ++(0,1.2)--++(0.2,0.1);
\draw[red,dashed,rounded corners=7pt] (0.8,1.83)-- ++(0.2,-0.1)--++(0,-0.86);

\draw[red,fill=red] (0.5,0.52) circle (1pt);
\draw[red,fill=red] (1,0.88) circle (1pt);

\node[] at (2,0.7) {$S_{n+1}$};
\node[] at (-1.4,2.5) {$S_{n+2}$};

\end{tikzpicture}
    }
    }}
    \qquad

    \caption{Pairs of pants contained in $C$}
    \label{fig:pairsofpants_in_C}
\end{figure}

Now by applying a Liouville isomorphisms of standard surfaces given in Definition \ref{defn:isomorphism-standard-surface}, the following 
we immediately derive the following two lemmata.

\begin{lemma}\label{lem:partition}
Let $M$ be a standard surface with a blueprint pair $(\cS,\chi_\cS)$
given in Definition \ref{defn:blue-print}.
We can partition $\cS$ into the subsets consisting of maximal chains
$I_1\supset I_2\supset \dots$ of $\cS$ so that the following hold:
\begin{enumerate}
\item  For any maximal chain with minimal element
$$
I_1\supset I_2\supset \dots \supset I_{i_0}
$$
we have
$$
\xi_\cS(I_{i_0})= 1, \quad  \xi_\cS(I_i)=0 \, \forall i \neq i_0.
$$
\item For any maximal chain without minimal element
$$
I_1\supset I_2\supset \dots ,
$$
we have $\xi_\cS(I_i)=0$ for all $i$.
\end{enumerate}
\end{lemma}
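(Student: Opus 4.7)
The plan is to encode the desired partition as the connected components of an auxiliary graph $G$ on $\cS$. The first step is to read off the binary tree structure of $\cS$ (Definition~\ref{defn:binary-tree}) carefully: every $I\in\cS$ has either one or two children in $\cS$, and the definition of $\xi_\cS$ records exactly which of these occurs. To make this precise, one first shows that $\cS$ has no leaves, since any $p\in I\cap B(M)\subset\mathcal{C}$ lies in one of the two triadic children of $I$ in $\cI$, which therefore meets $B(M)$ and hence belongs to $\cS$. Consequently $\xi_\cS(I)=0$ precisely means $I$ has a unique child in $\cS$, while $\xi_\cS(I)=1$ means $I$ has exactly two children in $\cS$, one in each third.

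Next I would define $G$ on the vertex set $\cS$ by declaring $I$ and $J$ adjacent iff one is the parent of the other in the tree of $\cS$ and that common parent has $\xi_\cS=0$. Every vertex of $G$ then has degree at most two: a $\xi_\cS=0$ vertex is joined downward to its unique child and possibly upward to its parent (only if that parent also has $\xi_\cS=0$); a $\xi_\cS=1$ vertex is not joined to either of its children, since the role of parent at those edges has $\xi_\cS=1$, and it is joined upward only when its parent has $\xi_\cS=0$. Combined with the fact that $G$ is acyclic as a subgraph of the tree $\cS$, each connected component of $G$ is a simple path (finite, one-sided infinite, or two-sided infinite) totally ordered by inclusion, hence a chain in $\cS$ of the type appearing in the lemma. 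Since connected components partition the vertex set, this already gives a partition of $\cS$ into chains.

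It remains to identify the endpoints of each component and to match them with the two cases of the statement. At the upper end, a chain terminates at some $I_1$ which is either the root $[0,1]$ or a vertex whose parent has $\xi_\cS=1$, and in the latter case the parent lies in a different chain. At the lower end, the no-leaf observation from the first paragraph forces the chain either to extend downward indefinitely through $\xi_\cS=0$ children, producing an infinite chain as in case~(2), or to terminate at a vertex $I_{i_0}$ with $\xi_\cS(I_{i_0})=1$, giving case~(1); in particular, no two-sided infinite chain occurs. Every vertex strictly above the minimum of a finite chain is joined downward by a $G$-edge and therefore satisfies $\xi_\cS=0$, which is exactly the shape asserted. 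Maximality of each chain is automatic from the maximality of connected components of $G$. The only real bookkeeping obstacle is the no-leaf claim, but that reduces to the short Cantor-set argument above; everything else is formal graph theory.
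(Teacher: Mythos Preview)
Your argument is correct. The paper does not actually supply a proof of this lemma: it only asserts that this and the subsequent lemma ``immediately'' follow, so your explicit construction via the auxiliary graph $G$ fills in what the paper leaves to the reader. The essential combinatorial point---that $\cS$ has no leaves, so $\xi_\cS(I)=0$ forces a unique child while $\xi_\cS(I)=1$ forces two---is exactly the mechanism underlying the dichotomy, and your degree-$\leq 2$ / acyclic analysis cleanly converts this into the chain decomposition. One small remark: the word ``maximal'' in the statement should not be read as ``maximal totally ordered subset of $\cS$'' (which would fail, e.g., when $B(M)=\mathcal{C}$ and every component of $G$ is a singleton), but simply as labeling the cells of the partition; your proof handles this correctly by taking connected components.
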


\begin{lemma}\label{lem:control_of_asc,dec_mflds}
Let $(\CS, \chi_{\CS})$ be the blueprint pair of $M$ and
consider the partition obtained in Lemma \ref{lem:partition}.
Let $D$ be the base disk of $M$. We can partition $M\setminus D$
into the union of building blocks so that the union
is partitioned into sub-unions of those corresponding to each
maximal chain.  We can enumerate the
subset of saddle critical points of $\psi$ appearing
in the sub-union associated to each maximal chain into
$$
\{q_1, q_2, \cdots, q_i, \cdots\}
$$
and modify the given plurisubharmonic function $\psi$ so that
the following hold:
\begin{enumerate}
\item For all $i = 1, \cdots, k, \cdots$,
\be\label{eq:good}
\psi(q_i)<\psi(q_{i+1}).
\ee
In particular, we have $\psi^{-1}(c_i) = \{q_i\}$ for $c_i = \psi(q_i)$ for all $i \geq 1$.
\item All saddle connections connect $q_i$ and $q_{i+1}$ for all $i \geq 1$.
\end{enumerate}
We call the sub-union associated to each maximal chain a \emph{cell of partition}.
\end{lemma}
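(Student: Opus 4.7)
The plan is to modify $\psi$ by successive local perturbations supported near the saddles, working cell-by-cell of partition. Each cell corresponds to a maximal chain $I_1 \supset I_2 \supset \cdots$ with building blocks $B_{I_1}, B_{I_2}, \ldots$ attached outward from $D$ in the order dictated by the chain. By Lemma \ref{lem:partition}, these blocks are cylinders or cylinders-with-genus except possibly the innermost block of a finite chain, which is a pants. The saddles inside the cell are those contributed by each $B_{I_j}$ (namely zero for a cylinder, one for a pants, two for a cylinder-with-genus), and we enumerate them as $q_1, q_2, \ldots$ in the order induced by the inclusions $I_{j+1} \subset I_j$.

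For property (1), since $\psi$ from Proposition \ref{prop:plurisubh_const} is built by Weinstein handle attachments at increasing integer levels, we already have $\psi(q_i) \leq \psi(q_{i+1})$ within each cell. I would compose $\psi$ with a strictly increasing convex smooth function $h:\R \to \R$ and add compactly supported $J$-convex bumps $\epsilon_i \rho_i$ near each $q_i$ so that the modified critical values become strictly increasing along each chain. Since the cells have pairwise disjoint interiors in $M \setminus D$, one can simultaneously arrange the resulting critical values to be pairwise distinct across all cells, which yields the desired property that $c_i$ is a critical value attained only at $q_i$ within $\Crit \psi$.

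For property (2), the unstable manifold of $q_i$ inside its cell consists of two gradient rays exiting through the two outer boundary circles of $B_{I_i}$. The inclusion $I_{i+1} \subset I_i$ selects exactly one of these circles as the gluing locus of $B_{I_{i+1}}$, so the corresponding ray enters $B_{I_{i+1}}$ and, under the ordering from (1), flows toward the saddle $q_{i+1}$. Aligning this ray to terminate exactly at $q_{i+1}$ is a codimension-one condition on $\psi|_{B_{I_{i+1}}}$, which we enforce inductively by a further small $J$-convex bump perturbation supported well inside $B_{I_{i+1}}$, exploiting the $S^1$-direction of freedom transverse to $\grad \psi$ in the collar coordinates of $B_{I_{i+1}}$. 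The other unstable ray of $q_i$ exits through the second outer circle, which either lies in a different cell of partition or terminates at an end of $M$, so it cannot produce a same-cell saddle connection other than the constructed one.

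The main obstacle will be the simultaneous preservation of $J$-plurisubharmonicity and of the gradient-like Liouville condition under infinitely many perturbations. I would control this by choosing each bump of the form $\epsilon_i \rho_i$, with $\rho_i$ a model $J$-convex function in isothermal coordinates centered at $q_i$ and $\epsilon_i \to 0$ fast enough, so that $-d(d\psi \circ J) \geq 0$ is preserved pointwise and the Liouville form is altered only inside compact subsets sitting strictly inside each $B_{I_i}$, leaving the handle-theoretic Weinstein structure of Proposition \ref{prop:plurisubh_const} intact across gluing collars.
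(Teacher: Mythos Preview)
The paper does not give a detailed proof of this lemma; it asserts that both Lemma~\ref{lem:partition} and Lemma~\ref{lem:control_of_asc,dec_mflds} follow ``immediately'' by applying a Liouville isomorphism of standard surfaces (Definition~\ref{defn:isomorphism-standard-surface}). In other words, the paper exploits the freedom to replace the given standard Weinstein structure by an isomorphic one in which the saddle heights and saddle connections are already as desired, rather than perturbing a fixed $\psi$. Your perturbative route is more constructive and is in principle a legitimate alternative.

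That said, there is a genuine inconsistency in your treatment of property~(2). You correctly record at the outset that, by Lemma~\ref{lem:partition}, the blocks in a cell are cylinders or cylinders-with-genus except possibly the terminal one; but note that the pants block sits at the \emph{outermost} end of a finite chain (the minimal interval $I_{i_0}$), not the innermost. More seriously, your argument for~(2) is written entirely in the pants picture: you speak of ``two outer boundary circles of $B_{I_i}$'' and of the inclusion $I_{i+1}\subset I_i$ selecting one of them. A cylinder-with-genus has a \emph{single} outer boundary circle and contains \emph{two} saddles whose ascending and descending manifolds behave differently: the lower saddle's unstable rays remain in the two-circle region inside the block, while the upper saddle's two unstable rays both exit through the one outer circle. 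Forcing the descending manifold of $q_{i+1}$ to terminate at $q_i$ (which is what the application in Lemma~\ref{lem:eq_class_of_end_of_lagrangians}, Case~2, actually needs) therefore constrains both rays, a codimension-two condition per pair rather than codimension one. This does not doom the strategy---one can simply build the model handles of Proposition~\ref{prop:plurisubh_const} symmetrically so that these connections hold from the start, which is essentially what the paper's one-line invocation of Liouville isomorphisms amounts to---but your inductive bump scheme as written does not cover the case that actually dominates along a nonplanar isolated end.
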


We can also classify the proper $\psi$ gradient trajectories, up to Liouville isomorphism,
with the same ideal boundary.

\begin{lemma}\label{lem:eq_class_of_end_of_lagrangians}
For any end $p\in B(M)$, consider $\psi$-gradient trajectories ending at $p$.
Then there are at most two equivalence classes of such $\psi$-gradient trajectories
up to Liouville isotopy. More specifically, the following hold:
\begin{enumerate}
\item If $p \not\in B'(M)$ and $p$ is an isolated end, any two connected components of $L$ with
the same end $p$ is Liouville isotopic.
\item If $p\in B'(M)$ is an isolated end, there are exactly two such equivalence classes.
\item If $p$ is a limit point in $B(M)$ with respect to the subset topology of $\RR$,
there are at most two such equivalence classes.
\end{enumerate}
\end{lemma}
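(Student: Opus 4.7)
The plan is to reduce the classification to a local analysis in the end of $M$ associated to $p$, using Proposition \ref{prop:grad-sect_lag_ends} and the standard-surface structure of Lemma \ref{lem:control_of_asc,dec_mflds}. First I would observe that any $\psi$-gradient trajectory $\gamma$ ending at $p$ lies, outside a compact set, in the nested sequence of building blocks $B_{I_1}\supset B_{I_2}\supset \cdots$ attached to the unique maximal chain $I_1\supset I_2\supset \cdots$ in $\cS$ with $\bigcap_i I_i=\{p\}$. I will then compare two such trajectories $\gamma,\gamma'$ block by block: inside each $B_{I_i}$ I construct a compactly supported Liouville isotopy that normalizes $\gamma'\cap B_{I_i}$ to $\gamma\cap B_{I_i}$, and I assemble these local isotopies into a single global Liouville isotopy, using the saddle-value separation $\psi(q_i)<\psi(q_{i+1})$ from Lemma \ref{lem:control_of_asc,dec_mflds} to arrange their supports to be eventually disjoint.

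For case (1), for large $i$ each $B_{I_i}$ will be a plain cylinder $[0,1]\times S^1$ with area form $ds\wedge dt$, so every $\grad\psi$-trajectory is a vertical line $\{t_0\}\times[0,1]$ and any two are related by a rotation in the $t$-variable, a Hamiltonian and hence Liouville isotopy; concatenating these rotations yields a Liouville isotopy carrying $\gamma$ to $\gamma'$, giving a unique equivalence class. For case (2), where eventually $\xi_\cS(I_i)=0$ and $\chi_\cS(I_i)=2$, a gradient line through a cylinder-with-genus block splits into two topological types according to which side of the attached handle it passes; swapping them inside a single block would require a half-twist that must be absorbed by every subsequent block, so the asymptotic side of approach to $p$, an element of $\{+,-\}$ determined by the orientation of $M$ along the cylindrical tail, is a well-defined invariant taking exactly two values. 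Two explicit representatives will be produced by the two boundary-parallel trajectories in the model cylinder-with-genus, with all remaining freedom killed by cylinder rotations as in case (1). For case (3), the parallel argument with pair-of-pants blocks ($\xi_\cS(I_i)=1$) gives at most two classes: the exit leg at each step is forced by $p$, and the saddle of $\psi$ inside each block splits the possible gradient trajectories from the entry circle to the forced exit circle into two topological types, again distinguished by the same asymptotic side invariant.

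The main obstacle will be to assemble the block-level local Liouville isotopies into a genuine global Liouville isotopy. Two technical ingredients will be used: the saddle-level separation $\psi(q_i)<\psi(q_{i+1})$ will let me arrange the supports to be pairwise disjoint so that the infinite composition is smooth and well defined, and the uniform bounded geometry of the standard building blocks will give uniform $C^k$ bounds so that the composition remains a Liouville isomorphism in the sense of Definition \ref{defn:isomorphism-standard-surface}. The asymptotic matching of $\gamma$ with $\gamma'$ is then automatic from the local normalization, and the ``at least two'' half of case (2) follows from the non-triviality of the side invariant, which distinguishes two boundary-parallel representatives that any Liouville isotopy must preserve.
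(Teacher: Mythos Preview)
Your block-by-block strategy is a reasonable alternative to the paper's argument, and it handles Cases (1) and (2) essentially correctly. However, your analysis of Case (3) contains a genuine gap, and your overall approach differs from the paper's in a way worth noting.

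\textbf{The gap in Case (3).} You claim that in each pair-of-pants block the saddle ``splits the possible gradient trajectories from the entry circle to the forced exit circle into two topological types.'' This is false. In a single pair of pants with waist $S_n$ and legs $S_{n+1}, S_{n+1}'$, the descending manifold of the saddle hits $S_n$ at two points, dividing it into two arcs; exactly \emph{one} of those arcs flows to the prescribed leg $S_{n+1}$, and trajectories starting in that arc form a \emph{single} connected family. So locally there is only one type, not two. The bound ``at most two'' in Case (3) is not a per-block phenomenon: it emerges only after intersecting the constraints from \emph{all} subsequent saddles on a fixed reference circle $S_n$. When the arc selected at stage $k+1$ happens to be the complementary one to the nested sub-arc cut out at stage $k$, the running intersection can split into two components, but never more, because distinct descending manifolds do not cross. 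Your side-invariant picture does not capture this cumulative mechanism.

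\textbf{Comparison with the paper.} The paper avoids assembling an infinite composition of isotopies altogether. It first proves a clean correspondence: Liouville-isotopy classes of $\psi$-gradient trajectories through a fixed level circle $S_k$ are in bijection with connected components of $S_k \setminus \{\text{descending manifolds of saddles above level } k\}$. This reduces all three cases to a finite combinatorial count on a single circle, and the isotopy in each case is produced in one stroke (extend a rotation of $S_k$ along the arc). Your approach instead builds the isotopy as an infinite product of block-local moves; even granting disjoint supports and uniform $C^k$ bounds, you must still check that the limit intertwines gradient trajectories of $\psi$ outside some fixed $M^{\leq n_0}$ as required by Definition \ref{defn:isomorphism-standard-surface}, which your ``bounded geometry'' remark does not address. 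The paper's reduction to arc-counting sidesteps both this convergence issue and the erroneous local count in Case (3).

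A minor point: the building blocks $B_{I_i}$ are not nested as you write; they are successive pieces glued along circles. You presumably mean the nested regions ${(I_i)}_*$ or the components of $M\setminus M^{\leq n+i}$ containing $p$.
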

\begin{proof}
Let $\ell_1,\,\ell_2$ be the $\psi$ gradient trajectories corresponding to the ends of a sectorial Lagrangian,
and let $\phi^t$ be a Liouville isotopy
such that $\phi^0=id_M$ and $\phi^1(\ell_1)=\ell_2$.
Then since the isotopy induces the identity map on the ideal boundary $B(M)$, there exist $n\in \ZZ_{>0}$ such that $\ell_1,\,\ell_2$ should intersect the same boundary component of $M^{\leq k}$ for all $k\geq n$.
Pick $k$ and a boundary component, say $S_k$, of $M^{\leq k}$ that intersect $\ell_1$ and $\ell_2$.
Let $x_1,\,x_2$ be the intersection points $\ell_1\cap S_k,\, \ell_2\cap S_k$ respectively.
Then the intersection point between $\phi^t(\ell_1)$ and $S_k$ gives us a path from $x_1$ to $x_2$
in $S_k$.
Note that this path cannot intersect a descending manifold of a saddle point of $\psi$.

Conversely, if we can find a path in $S_k$ not intersecting
any descending manifold of a critical point of $\psi$, the path induces
a corresponding Liouville isotopy.
Therefore, we have a one-to-one correspondence between the set of connected components of
$$
S_k\setminus \{\text{descending manifold of saddle points of }\psi\}
$$
and the set of equivalence classes of gradient trajectories intersecting $S_k$.

From now on, we will count such connected components in each cases.

\medskip

\noindent{\bf Case 1:} \emph{$p$ is an isolated point of $B(M)$ and $p\not\in B'(M)$}.

By the standing hypothesis, for a sufficiently large $n \in \ZZ_{>0}$,
there is a connected component $C$ of $M\setminus M^n$ such that $C^*$ contains $p$ with respect to the topology of $B(M)$ defined in
Definition \ref{defn:ideal_bdy} and $C^*$ contains only $p$.
We also recall therefrom that $C^*$ is the set of all ends
$$
C^* = \big\{\{P_1\supset P_2 \supset \dots\} = p \in B(M) \mid  P_n \subset C\,
\text{\rm for all sufficiently large $n$} \big\}.
$$
Since $p\not\in B'(M)$ and $C^*$ contains only one end,  $C$ must be of finite type
and contain finitely many genus from the compact part and has one cylindrical end.
By taking larger $n$ if needed, we may assume every such genus is contained in $M^n$.
Therefore, $C$ does not contain a critical point and any two gradient trajectories starting from a point in $S_n$ are Liouville isotopic by an isotopy extending
the rotation near $S_n$ as before. This shows that there is the only equivalence class
in this case.

\medskip
\noindent{\bf Case 2:} \emph{ $p$ is an isolated point of $B(M)$ and $p\in B'(M)$}.

By assumption, there is a sufficiently large $n \in \ZZ_{>0}$ such that
$M\setminus M^n$ carries a connected component, say $C$, the
associated $C^*$ of which contains $p$ with respect to the topology of $B(M)$. By the
isolatedness assumption, $C^*$ again contains only $p$.
Since $p\in B'(M)$, $C^*$ contains a nonplanar end and $C$ contains infinitely many genus.
By our construction of $\psi$ as we referred in the Lemma \ref{lem:control_of_asc,dec_mflds},
$C$ is contained in a cell of partition and the descending manifold of the saddle critical
point of $\psi$ with minimum value in $C$
intersects $\partial C$ at two distinct points and the descending manifolds of
other critical points of $\psi$ in $C$ do not intersect $\partial C$.
Therefore, the set of points in $\partial C$ not contained in the descending manifold
of the minimum saddle critical point of $C$ consists of two connected open arcs.
Since the gradient trajectories issued from two points in the same open arc are Liouville
isotopic as before, we have derived that there are two Liouville isotopy classes in this case.

\medskip

\noindent{\bf Case 3:} \emph{$p$ is a limit point of $B(M)$}.

Since $p$ is a limit point of $B(M)$, any open neighbourhood of $p$ in $B(M)$ should contain other point in $B(M)$.
Therefore, for any $n \in \ZZ_{>0}$, the connected component $C$ of $M\setminus M^n$ such that
$C^*$ contains $p$ contains infinitely many pair of pants.
By taking a sufficiently large $n$,
we may assume $C$ does not contain a genus from the compact part.
Also, by our construction of $\psi$ as we referred in the Lemma \ref{lem:control_of_asc,dec_mflds}, it suffices to assume that $C$ does not contain a genus.
This is because $p$ is a limit point in $B(M)$ and attaching a cylinder with a genus cannot be repeated infinitely many times. Removing such finitely many cylinders with genus does not change
the equivalence classes of gradient trajectories.
See Figure \ref{fig:pairsofpants_in_C}.
Descending manifold of a critical point contained in $M^{\leq n+1}\setminus M^{\leq n}$ intersects $S_n$ at two points and divide $S_n$ into two connected arcs.
Gradient trajectory cannot intersect descending manifold of critical point and gradient trajectories starting from points in one of arcs cannot intersect $S_{n+1}$
Also, descending manifold of a critical point contained in $M^{\leq n+2}\setminus M^{\leq n+1}$  intersects $S_n$ at two points and divide $S_n$ into two connected arcs.
Again, gradient trajectory cannot intersect descending manifold of critical point and gradient trajectories starting from points in one of arcs cannot intersect $S_{n+2}$.
We need to take intersection of two open arcs to find gradient trajectories which intersects all
of $S_n,\,S_{n+1},\,S_{n+2}$.
We need to do this for all $n+3,\,n+4,\,\cdots$.
Since descending manifolds cannot intersect each other except for the base point, intersection of such connected arcs on $S_n$ has at most two connected components.
\end{proof}

\begin{cor}
For any $p \in B(M)$, there are at most two equivalence classes up to a Liouville isotopy for the ends of an open gradient-sectorial Lagrangian whose ideal boundary is $p \in B(M)$.
We denote them by $p_{L,+}$ and $p_{L,-}$ depending on their orientations.
\end{cor}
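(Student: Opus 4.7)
The plan is to derive this corollary as a direct consequence of the preceding Proposition \ref{prop:grad-sect_lag_ends} and Lemma \ref{lem:eq_class_of_end_of_lagrangians}. First I would observe that, by Proposition \ref{prop:grad-sect_lag_ends}, any end of an open gradient-sectorial Lagrangian $L$ is eventually a connected $\psi$-gradient trajectory living in $L \cap M^{\geq n}$ for all sufficiently large $n$. Therefore the Liouville isotopy classification of ends of $L$ reduces to the Liouville isotopy classification of proper $\psi$-gradient trajectories with a prescribed ideal boundary point $p \in B(M)$, which is exactly the content of Lemma \ref{lem:eq_class_of_end_of_lagrangians}.

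Applying the lemma then gives at most two equivalence classes of such trajectories for each $p \in B(M)$, whether $p$ is an isolated planar end (Case 1, one class), an isolated non-planar end (Case 2, two classes), or a limit point (Case 3, at most two classes). This yields the bound of two equivalence classes asserted in the corollary.

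To produce the labels $p_{L,+}$ and $p_{L,-}$, I would next use the orientation of $L$. Since $L$ is a connected open one-manifold, it is oriented-homeomorphic to $\R$, and Definition \ref{defn:ideal-bdy} supplies asymptotic evaluation maps $\ev_\infty^L \colon \{\pm\infty\} \to B(M)$. If only one end of $L$ limits to $p$, the orientation distinguishes whether $p = \ev_\infty^L(+\infty)$ or $p = \ev_\infty^L(-\infty)$, and we label the corresponding equivalence class $p_{L,+}$ or $p_{L,-}$ respectively. When $p$ is a double ideal boundary point of $L$ we obtain both labels simultaneously from the same Lagrangian, which is consistent because in Cases 2 and 3 the two open arcs in $\partial M^{\leq k}$ obtained in the proof of Lemma \ref{lem:eq_class_of_end_of_lagrangians} correspond to the two distinct exits of $L \cong \R$ through $\partial M^{\leq k}$ near $p$.

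The only subtlety, and what I would treat most carefully, is to verify that the labeling is intrinsic: the choice of level $k$ used to identify the two open arcs in $\partial M^{\leq k} \setminus \{\text{descending manifolds}\}$ should not affect which arc is tagged $+$ versus $-$. This is handled by noting that the gradient flow of $\psi$ induces a homeomorphism between the arcs appearing on $\partial M^{\leq k}$ and those on $\partial M^{\leq k'}$ for $k' > k$, since no descending manifold is crossed along a gradient ray, so the orientation transport along $L$ itself yields a well-defined assignment of $\pm$ to the two Liouville isotopy classes. Once this compatibility is in place, the labels $p_{L,+}$ and $p_{L,-}$ are canonical and the proof is complete.
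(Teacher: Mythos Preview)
Your proposal is correct and matches the paper's intent: the corollary is stated without proof in the paper, as an immediate consequence of Proposition \ref{prop:grad-sect_lag_ends} (ends of open gradient-sectorial Lagrangians are $\psi$-gradient trajectories) together with Lemma \ref{lem:eq_class_of_end_of_lagrangians} (at most two Liouville isotopy classes of such trajectories for each $p$). Your additional care with the orientation labeling and the compatibility across levels $k$ is more detail than the paper supplies, but is consistent with Definition \ref{defn:ideal-bdy} and does no harm.
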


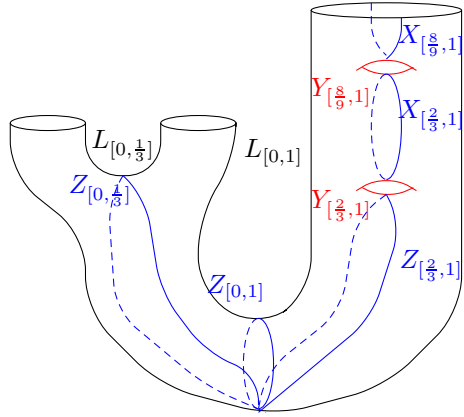
\begin{figure}

\begin{tikzpicture}

\draw[] (1,0.9) arc (0:360:0.5 and 0.1);
\draw[] (6,2.4) arc (0:360:1 and 0.1);
\draw[] (3,0.9) arc (0:360:0.5 and 0.1);

\draw[rounded corners=13pt] (2,0.9)-- ++(0,-0.7)--++(-1,0)--++(0,0.7);

\draw[rounded corners=13pt] (3,0.85)-- ++(0,-0.4)--++(-0.5,-0.8)--++(0,-1)--++(0.8,-0.5)--++(0.7,0.5)--++(0,3.75);

\draw[rounded corners=13pt] (0,0.9)-- ++(0,-0.7)--++(1,-0.8)--++(0,-1)--++(1,-1)--++(1.5,-0.4)--++(1.5,0.4)--++(1,1)--++(0,4);

\draw[blue,densely dashed] (3.3,-1.7) arc (90:270:0.2 and 0.6);
\draw[blue] (3.3,-2.9) arc (270:450:0.2 and 0.6);

\begin{scope}[scale=0.8]
\draw[red,rounded corners=10pt] (5.75,0)-- +(0.5,0.3)-- +(1,0);
\draw[red,rounded corners=8pt] (5.85,0.05)-- +(0.4,-.2)-- +(0.85,0);
\end{scope}

\begin{scope}[scale=0.8]
\draw[red,rounded corners=10pt] (5.75,2)-- +(0.5,0.3)-- +(1,0);
\draw[red,rounded corners=8pt] (5.85,2.05)-- +(0.4,-.2)-- +(0.85,0);
\end{scope}

\draw[blue,rounded corners=10pt] (5,-0.05)-- ++(0.2,-.4)-- ++(-0.4,-1.25)--++(-1.5,-1.25);

\draw[blue,rounded corners=10pt,densely dashed] (5,-0.05)-- ++(-0.4,-.3)-- ++(-0.2,-1.25)--++(-1,-0.8)--++(-0.1,-0.5);

\draw[blue,rounded corners=6pt](5.2,2.3)--++(0,-0.35)--++(-0.2,-0.2);
\draw[blue,rounded corners=6pt,densely dashed](4.8,2.5)--++(0,-0.5)--++(0.2,-0.2);

\draw[blue,rounded corners=10pt] (1.5,0.2)-- ++(0.3,-.3)-- ++(0.2,-1.25)--++(0.6,-0.8)--++(0.7,-0.3)--++(0,-0.4);

\draw[blue,rounded corners=10pt,densely dashed] (1.5,0.2)-- ++(-0.2,-.3)-- ++(0.1,-1.5)--++(1.2,-1)--++(0.8,-0.35);

\draw[blue,densely dashed] (5,1.55) arc (90:270:0.2 and 0.7);
\draw[blue] (5,0.15) arc (270:450:0.2 and 0.7);

\node at (1.5,0.6) {$L_{[0,\frac{1}{3}]}$};
\node at (3.5,0.5) {$L_{[0,1]}$};

\node[blue] at (5.6,-1) {$Z_{[\frac{2}{3},1]}$};
\node[blue] at (5.6,1) {$X_{[\frac{2}{3},1]}$};
\node[red] at (4.4,-0.2) {$Y_{[\frac{2}{3},1]}$};
\node[red] at (4.4,1.3) {$Y_{[\frac{8}{9},1]}$};
\node[blue] at (3,-1.3) {$Z_{[0,1]}$};
\node[blue] at (1.2,0) {$Z_{[0,\frac{1}{3}]}$};

\node[blue] at (5.6,2) {$X_{[\frac{8}{9},1]}$};

\end{tikzpicture}

\caption{Example of labelling generators}
\label{fig:labelling_generators}
\end{figure}

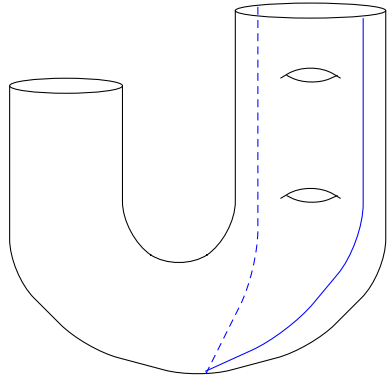
\begin{figure}

\centering

\begin{tikzpicture}

\draw[] (2.5,1.5) arc (0:360:0.75 and 0.1);

\draw[] (6,2.5) arc (0:360:1 and 0.1);

\draw[rounded corners=13pt] (2.5,1.5)--++(0,-2)--++(0.75,-0.5)--++(0.75,0.5)--++(0,3);

\draw[rounded corners=13pt] (1,1.5)--++(0,-2.5)--++(1,-1)--++(1.5,-0.4)--++(1.5,0.4)--++(1,1)--++(0,3.5);

\draw[blue,rounded corners=15pt] (5.7,2.4)-- ++(0,-3)-- ++(-1,-1.2)--++(-1.1,-0.5);

\draw[blue,rounded corners=15pt,densely dashed] (4.3,2.55)-- ++(0,-3.5)-- ++(-0.7,-1.4);

\begin{scope}[scale=0.8]
\draw[rounded corners=10pt] (5.75,0)-- +(0.5,0.3)-- +(1,0);
\draw[rounded corners=8pt] (5.85,0.05)-- +(0.4,-.2)-- +(0.85,0);
\end{scope}

\begin{scope}[scale=0.8]
\draw[rounded corners=10pt] (5.75,2)-- +(0.5,0.3)-- +(1,0);
\draw[rounded corners=8pt] (5.85,2.05)-- +(0.4,-.2)-- +(0.85,0);
\end{scope}

\end{tikzpicture}

\caption{$L_{1^1,1^2}$ on nonplanar end $1$}

\end{figure}

Now we list two collections of Lagrangians on $M$ which together we anticipate will generate
our Fukaya category. The first class consists of ascending and descending manifolds of saddle critical points and the other consists of open Lagrangians which is a gluing of two gradient trajectories issued from the base point. Both classes are determined by end structure and we will show that they split-generate the Fukaya category of $M$ that we introduce in the present paper.

\begin{defn}[Saddle connection Lagrangians]\label{defn:finite_generators}
Let $M$ be a standard surface.
For each $I\in S$, we consider a collection of Lagrangians labelled as follows:
\begin{enumerate}
\item $\{L_I\}$: If $\xi_\cS(I)=1$, $L_I$ represents a Lagrangian which is the ascending manifold of the saddle point of the pair of pants attached on $I$.
\item $\{X_I,\,Y_I\}$: If $\chi_\cS(I)=2$, $X_I$ represents the ascending manifold of one of two critical
points of the genus with bigger $\psi$ value and $Y_I$ represents the descending manifold.
\item $\{Z_I\}$: If $B_I$ contains a critical value whose descending manifold contains the base point,
$Z_I$ represents that descending manifold.
\end{enumerate}
\end{defn}

It is already known that if $M$ is of finite type, Lagrangians labelled in Definition
\ref{defn:finite_generators} split-generate $Fuk(M)$.

\begin{notation}[$\langle S \rangle$]\label{notation:<S>}
Given a subset $S$ of an objects of a category, we denote $\langle S \rangle$ as
the smallest subcategory split-generated by $S$.
\end{notation}

For the split-generation of Fukaya category of infinite type surfaces, we define
open Lagrangians with desired end behaviours.
By our construction of $\psi$, if a point in $M$ is not contained in a descending or
an ascending manifold of a saddle critical point, it should be contained in an ascending
manifold of the base point. This leads us to the following definition.

\begin{defn}[Open Lagrangian generators]\label{defn:open_lag_btwn_ends}
For every pair $p,\, q \in b(M)$, let $L_{p,q}$ be a proper open (oriented) Lagrangian
such that $\ev^+_L(+\infty) = q$ and $\ev^-_L(-\infty) = p$ and it
does not pass any saddle critical points.
We may assume that it passes through the base point $p_{\text{\rm rt}}$.
We assign the orientation and other brane data which
make it an open gradient-sectorial Lagrangian brane. $L_{p,q}$ constructed in this way
is unique up to a Liouville isotopy.
\end{defn}

\subsection{The case of ideal boundaries with countably many limit points}

In this subsection, we consider Riemann surface $(M,\CT)$ whose ideal boundary $B(M)$
has countably many limit points $\{p_i\}_{i\in \ZZ_{>0}}$. We note that $[0,1]\setminus \{p_i\}_{i\in \ZZ_{>0}}$ has at most countably many connected components.

\begin{defn}\label{defn:generators}
Let $M$ be a standard surface and suppose $B(M)$ has
at most countably many limit points $\{p_i\}_{i\in \ZZ_{>0}}$.
We list the following open Lagrangians denoted by as follows:
\begin{enumerate}

\item $L_{p,p}$: For $p \in B(M)$ if there are two equivalence classes of the end whose ideal boundary is $p$.
\item
Take an intersection of connected components of $[0,1]\setminus \{p_i\}_{i\in \ZZ_{>0}}$ with $B(M)$ and label the nonempty sets as $C_j$, $j\in \ZZ_{>0}$.
Pick $q_j\in C_j$ for every $j$.
After that,
\begin{itemize}
\item $L_{{q_1},{q_j}}$ for all $j>1$
\item $L_{{q_1},{p_i}}$ for all $i$
\end{itemize}
\end{enumerate}
\end{defn}

For three distinct elements $p ,\, q ,\, r \in  B(M)$,
$L_{p ,r }\in \langle L_{p ,q },L_{q ,q }, L_{q ,r } \rangle$.
Also, if ideal boundary of $L_I$ is $\{p ,q\} $ for $I\in \cS$, we have the unique maximal chain
$I=I_1\subsetneq I_2 \subsetneq \cdots \subsetneq I_n$ so that

\begin{equation}\label{eqn:generation_L_p,q}
L_{p ,q }\in \langle L_{I_1},\,X_{I_1},\,Y_{I_1},\,X_{I_2},\,Y_{I_2},\,\dots,\,X_{I_n},\,Y_{I_n} ,\,Z_{I_n}\rangle,
\end{equation}

\begin{equation}\label{eqn:generation_L_I}
L_{I_1}\in \langle L_{p ,q },\,X_{I_1},\,Y_{I_1},\,X_{I_2},\,Y_{I_2},\,\dots,\,X_{I_n},\,Y_{I_n} ,\,Z_{I_n}\rangle.
\end{equation}

We may regard it as a quiver, where vertices are elements of $b(M)$ and
map from $p $ to $q $ is $L_{p ,q }$.
Note that for any pair with $p\neq q \in C_j$ of Definition \ref{defn:generators}, we can find a finite
sequence $I_1,\dots,I_n$ of elements of $\cS$ so that ideal boundary points of $L_{I_t}$ are
$p_t,p_{t+1}$ and $p=p_1,\,q=p_{n+1}$.
Therefore, with reverse orientation and split-generation, we can get any map between two vertices
in this quiver from Lagrangians in Definition \ref{defn:finite_generators} and \ref{defn:generators}.

We are ready to state our theorem on split generation.

\begin{theorem}\label{thm:main}
Let $M$ be a standard surface.
If its end structure has at most countably many limit points,
$Fuk(M)$ is split-generated by the set of Lagrangians described in Definition
\ref{defn:finite_generators} and \ref{defn:generators}.
\end{theorem}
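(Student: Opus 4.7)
\emph{Overall strategy.} I would prove the split-generation by reducing every gradient-sectorial Lagrangian brane $L$ to a Lagrangian of standard form determined by its ideal boundary data, and then iteratively writing that standard form as a finite iterated cone in the listed generators. Two ingredients make this workable: the ends classification in Lemma \ref{lem:eq_class_of_end_of_lagrangians}, and the proposition stated in the introduction that $Mor_{Fuk(M)}(L_1,L_2)$ is always computable inside some finite Liouville subdomain $\widehat{M^{\leq n}}$, so that any relation between objects can be checked one test brane at a time inside a finite-type piece of $M$.

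\emph{Normal form and the closed case.} By Proposition \ref{prop:grad-sect_lag_ends} together with Lemma \ref{lem:eq_class_of_end_of_lagrangians}, a connected gradient-sectorial $L$ is either closed or has exactly two ends, each specified up to Liouville isotopy by an ideal endpoint $p\in B(M)$ (and a $\pm$-sign when $p$ is nonplanar or a limit point). After a Liouville isotopy, which is a quasi-isomorphism in $Fuk(M,\CT)$, one may assume $L$ is one of the models $L_{p,q}$ or $L_{p,p}$ of Definition \ref{defn:open_lag_btwn_ends}. A closed $L$ lies in some $M^{\leq n}$, and there the known finite-type generation result (e.g.\ \cite{heather}) realises $[L]$ as an iterated cone on the $L_I, X_I, Y_I, Z_I$ with $B_I\subset M^{\leq n}$, all of which appear in Definition \ref{defn:finite_generators}.

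\emph{Open Lagrangians via concatenation.} For $L_{p,q}$ with $\{p,q\}$ the ideal boundary of a single $L_I$, a finite induction along the chain $I=I_1\subsetneq\cdots\subsetneq I_n$ in $\cS$ yields the displayed relation \eqref{eqn:generation_L_p,q}, expressing $L_{p,q}$ as an iterated cone of finitely many $L_{I_k}, X_{I_k}, Y_{I_k}$ together with $Z_{I_n}$. For a general pair $(p,q)\in B(M)^2$, I would use the concatenation relation $L_{p,r}\in\langle L_{p,q}, L_{q,q}, L_{q,r}\rangle$: choose the reference points $q_1$ and $\{q_j\}$ from the countably many components $C_j$ of $B(M)\setminus\{p_i\}_{i\in\ZZ_{>0}}$, observe that any two points of a common $C_j$ are linked by finitely many consecutive $L_{I_k}$'s (since $C_j$ corresponds to a finitely-branching subtree with no limit point), and handle crossings between different $C_j$ or endpoints at a limit point $p_i$ by the explicit Lagrangians $L_{q_1,q_j}$ and $L_{q_1,p_i}$ of Definition \ref{defn:generators}. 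The doubled ends $L_{p,p}$ are handled directly by the $L_{p,p}$ entries in the same definition.

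\emph{Main obstacle.} The delicate case is $L_{p,p_i}$ when $p_i$ is a limit point: the chain of intervals of $\cS$ nesting down to $p_i$ is infinite, so a naive application of \eqref{eqn:generation_L_p,q} gives only an \emph{infinite} iterated cone, which is not a legitimate expression for split-generation. My plan to circumvent this is to invoke the morphism-localisation proposition: for any test brane $K$, the complexes $Mor(L_{p,p_i},K)$ and $Mor(L_{q_1,p_i},K)$ are both identified with complexes computed in some common $\widehat{M^{\leq N(K)}}$, inside which the nesting chain is truncated at depth $N(K)$ and the infinite tower collapses to a finite iterated cone in the generators of Definition \ref{defn:finite_generators}. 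Concatenating with a finite $L_{q_1,q_j}$-path from $q_1$ to $p$ then exhibits $L_{p,p_i}$ as a finite iterated cone of listed generators after each test, and a diagonal/stability argument across test objects promotes this to an honest quasi-isomorphism in $Fuk(M,\CT)$. I expect this telescoping step, together with the verification that the collapse of the infinite tower is natural in $K$, to be the most technical part of the proof.
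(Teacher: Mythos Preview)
Your proposal has two genuine gaps, and the paper takes a different route that sidesteps both.

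\textbf{The normal form step is false.} You claim that after a Liouville isotopy any open connected $L$ becomes one of the models $L_{p,q}$ of Definition \ref{defn:open_lag_btwn_ends}. Lemma \ref{lem:eq_class_of_end_of_lagrangians} only classifies the \emph{ends} of $L$ up to Liouville isotopy; it says nothing about the compact part. On a surface with genus, two arcs with the same asymptotic data can differ by winding around handles and are not Liouville isotopic to one another, so you cannot simply replace $L$ by $L_{p,q}$. This is exactly the information that the finite-type generation result must absorb, and you have thrown it away before invoking that result.

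\textbf{The ``diagonal/stability'' step is not a proof of split-generation.} Showing that for each test object $K$ there is \emph{some} finite cone $C_K\in\langle G\rangle$ with $Mor(L_{p,p_i},K)\simeq Mor(C_K,K)$ does not produce a single object of $\langle G\rangle$ quasi-isomorphic to $L_{p,p_i}$; the cone depends on $K$ and there is no limiting procedure inside a split-closed triangulated category that assembles these into one object. More to the point, no infinite tower is needed here at all: since $L_{q_1,p_i}$ is already a listed generator, a single surgery with it replaces the $p_i$-end of $L$ by an isolated end. The paper does precisely this in one line (Step (4)): take $L'\in\langle L, L_{q_1,p_i}\rangle$ with both ends isolated, then $L\in\langle L', L_{q_1,p_i}\rangle$.

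\textbf{What the paper does instead.} The paper builds, for each $n$ and each choice $\vec r$ of gradient rays through the boundary circles of $M^{\leq n}$, an $A_\infty$-functor $\iota^{\vec r}_n:Fuk(\widehat{M^{\leq n}})\to Fuk(M)$ that extends branes along the chosen rays; it is a one-sided inverse to the Viterbo restriction $\rho_n$ on those $L$ whose ends lie in the classes $\vec r$. Once ends are isolated (after the concatenation of Step (4), and after adjusting by $L_{p,p}$ in the nonplanar case so the end class matches $\vec r$), one has $\iota^{\vec r}_n\circ\rho_n(L)=L$ for large $n$. The finite-type generation of $\rho_n(L)$ by $X_I,Y_I,Z_I,\widehat{L_I\cap M^{\leq n}}$ is then \emph{transported forward} by the functor $\iota^{\vec r}_n$, which handles the compact topology of $L$ automatically and turns each $\widehat{L_I\cap M^{\leq n}}$ into an $L_{r_s,r_t}$ that is itself generated by $G$ via \eqref{eqn:generation_L_I}. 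This avoids both the false normal-form reduction and any need for infinite cones.
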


\begin{proof}
We call the set of generators in Definition \ref{defn:finite_generators} and \ref{defn:generators}
as $G$ and want to show that $Fuk(M,\CT)= \langle G \rangle$. For this purpose, it is enough
to show that any object of $Fuk(M,\CT)$ is generated by those in $G$. We denote $Fuk(M,\CT) = Fuk(M)$
below for the simplicity of notation since we do not change $\CT$ here.

First, recall that every noncompact element $L\in Ob(Fuk(M))$ is invariant under
gradient flow of $\psi$ near infinity of $M$, i.e., on $\partial M^{\leq n}$ for every
sufficiently large $n\in \ZZ_{>0}$. Recalling that any $n\in \ZZ_{>0}$, $M^{\leq n}$ is a Liouville subdomain of $M$, we have a
Viterbo restriction functor
$$
\rho_n:Fuk(M)\rightarrow Fuk(\widehat{M^{\leq n}})
$$
where $\rightarrow Fuk(\widehat{M^{\leq n}})$ is the Liouville completion of $M^{\leq n}$
obtained by attaching a cylindrical end to each boundary component thereof.
We alert readers that \emph{we do not have a left inverse of this functor unlike the case of
finite type Liouville manifolds, because $\rho_n$ forgets information of
Lagrangian submanifold on $M\setminus M^{\leq n}$.}

Our proof will be done in the following steps:
\begin{enumerate}
\item Construct a functor $\iota^{\vec{r}}_n$ which satisfies $\iota^{\vec{r}}_n \circ \rho_n(L)=L$ for some objects $L\in Fuk(M)$.
\item Split-generate $L\in Fuk(M)$ which satisfies $\iota^{\vec{r}}_n \circ \rho_n(L)=L$ using generators and images of $\iota^{\vec{r}}_n$.
\item Split-generate images of $\iota^{\vec{r}}_n$ using Lagrangians in (1) and generators.
\item Split-generate any object of $Fuk(M)$.
\end{enumerate}

\medskip

\noindent{\bf Step (1):} We construct a functor $\iota^{\vec{r}}_n$ by manually designating
behaviour of Lagrangians in each connected component of $M\setminus M^{\leq n}$.
Since $\partial M^{\leq n}$ has finitely many connected components, we name them as
$\sigma_1,\,\ldots,\,\sigma_m$.
For every $1\leq i\leq m$, pick a point $x_i\in \sigma_i$ which is not contained in
a descending manifold of a critical point of $\psi$.
Then gradient flow starting from $x_i$ is in some equivalence class $r_i\in b(M)$ and
we write $\vec r: = \{r_1,\dots,r_m\}$.
If $L$ is closed, $L\subset \text{\rm int}(M^{\leq n})$ and we send $L$ into $M$ under the inclusion map.
If $L$ is open, compare connected components of $M\setminus  M^{\leq n}$ and
$\widehat{M^{\leq n}}\setminus M^{\leq n}$.
If a connected component of $M\setminus  M^{\leq n}$ attached to $\sigma_i$ is an infinite cylinder, it is symplectomorphic to corresponding component of $\widehat{M^{\leq n}}\setminus M^{\leq n}$
and we set $\iota^{\vec{r}}_n$ as an inclusion map on that component.
Otherwise, we may find a representative of $L$ in the Liouville isomorphism class whose
intersection with $\sigma_i$ is $x_i$ for all $i$ and invariant under gradient flow of $\psi$
near $\sigma_i$.
This can be done by twisting $L$ near each $\sigma_i$.
After that, we send this into $M$ under the inclusion map and extend from $x_i$ along the gradient
flow of $\psi$.
This is an $A_\infty$-functor from $Fuk(\widehat{M^{\leq n}})$ to $Fuk(M)$.

\medskip
\noindent{\bf Step (2):} Pick $L\in Ob(Fuk(M))$. We may assume $L$ is connected.
We may also regard $X_I,Y_I,Z_I$ for ${I\in \bigcup_{k=1}^{n-1} \CI^{(k)}}$ as objects of $Fuk(\widehat{M^{\leq n}})$ and we still denote their images $\iota^{\vec{r}}_n(X_I),\,\iota^{\vec{r}}_n(Y_I),\,\iota^{\vec{r}}_n(Z_I)$
by $X_I,Y_I,Z_I$ and vice versa.

If $L$ is closed, there exists $n\in \ZZ_{>0}$ so that $L\subset M^{\leq n}$ and
$L=\iota^{\vec{r}}_n \circ \rho_n(L)$ and
$\rho_n(L)\in Ob(Fuk(\widehat{M^{\leq n}}))$ and $\rho_n(L)\in \langle X_I,\,Y_I,\,Z_I,\,\widehat{L_I\cap M^{\leq n} }\rangle_{I\in \bigcup_{k=1}^{n-1} \CI^{(k)}}$.
Therefore,
$$
L=\iota^{\vec{r}}_n \circ \rho_n(L)\in \left\langle \iota^{\vec{r}}_n(X_I),\,\iota^{\vec{r}}_n(Y_I),\,\iota^{\vec{r}}_n(Z_I),\,
\iota^{\vec{r}}_n(\widehat{L_I\cap M^{\leq n} }) \right\rangle_{I\in \bigcup_{k=1}^{n-1} \CI^{(k)}}.
$$
Since $X_I,\,Y_I,\,Z_I$ for $I\in \bigcup_{k=1}^n \CI^k$ are also closed Lagrangians contained in
$M^{\leq n}$, we indeed have
$$
L\in \left\langle X_I,\,Y_I,\,Z_I,\,\iota^{\vec{r}}_n(\widehat{L_I\cap M^{\leq n} })
\right\rangle_{I\in \bigcup_{k=1}^{n-1} \CI^{(k)}}.
$$

Now suppose $L$ is open. Since $L$ is connected, $L$ has two ends, say $p,\,q$.
If $p,\,q$ are isolated points in $B(M)$ and cylindrical,
we can find a sufficiently large $n\in \ZZ_{>0}$ such that $\iota^{\vec{r}}_n$ is an inclusion map on those
cylindrical ends and $\iota^{\vec{r}}_n \circ \rho_n(L)=L$ holds.
Again by construction, we have
$\rho_n(L) \in Ob(Fuk(\widehat{M^{\leq n}}))$ and
$\rho_n(L)\in \langle X_I,\,Y_I,\,Z_I,\,\widehat{L_I\cap M^{\leq n} } \rangle_{I\in \bigcup_{k=1}^{n-1} \CI^{(k)}}$.
Then
$$
L=\iota^{\vec{r}}_n \circ \rho_n(L)\in \left \langle X_I,\,Y_I,\,Z_I,\,\iota^{\vec{r}}_n(\widehat{L_I\cap M^{\leq n} }) \right\rangle_{I\in \bigcup_{k=1}^{n-1} \CI^{(k)}}.
$$

\medskip

\noindent{\bf Step (3):} For ${I\in \bigcup_{k=1}^{n-1} \CI^{(k)}}$, $\iota^{\vec{r}}_n(\widehat{L_I\cap M^{\leq n} })=\iota^{\vec{r}}_n\circ \rho_n(L_I)$.
Also, by (\ref{eqn:generation_L_I}), $L_I$ is split-generated by a Lagrangian in Definition  \ref{defn:open_lag_btwn_ends} and closed Lagrangians in Definition \ref{defn:finite_generators}.

For $L_{p,q}$, there exists $s,t\in \{1,\dots,m\}$ determined by $p,\, q$ such that
$\iota^{\vec{r}}_n\circ \rho_n(L_{p,q})=L_{r_s,r_t}$.
Therefore, $\iota^{\vec{r}}_n\circ \rho_n(L_I)$ can be split-generated by Lagrangians in $G$.

\medskip

\noindent{\bf Step (4):} Suppose $L\in Ob(Fuk(M))$ is open  with $\del_\infty L = \{p,q\}$,
where exactly one of the elements, say $q$, is cylindrical.
If $p$ is an isolated point in $B(M)$ and $p\in B'(M)$,
there are two equivalence classes of ends whose ideal boundary is $p$.
For sufficiently large $n$, exactly one of those equivalence classes is in $\vec{r}$.
If an end of $L$ is in  $\vec{r}$, $\iota^{\vec{r}}_n \circ \rho_n(L)=L$ holds.
Otherwise, we have a Lagrangian $L'\in \langle L,\,L_{p,p}\rangle$ so that
 $L\in \langle L',\,L_{p,p}\rangle$ and an end of $L'$ is in $\vec{r}$.
Therefore, $\iota^{\vec{r}}_n \circ \rho_n(L')=L'$ holds and $L\in \langle L',\,L_{p,p}\rangle$

If $p$ is a limit point in $B(M)$, we have
$L'\in \langle L,\,L_{{q_1},p}\rangle$ so that ends of $L'$ are isolated.

Repeat this process to cover the case when both ends are not cylindrical.
Therefore, $L\in \langle G \rangle$ for any $L\in Ob(Fuk(M))$ and $Fuk(M,\CT)= \langle G \rangle$ holds.
\end{proof}

\section{Morphisms of Fukaya category of a surface}\label{sec:calculation}

\subsection{Morphisms in noncylindrical ends}
As we mentioned in the introduction, consider Hamiltonians $H$ of the type $H = \kappa \circ \psi$ for a one-variable function function $\kappa: \R \to \R$.

Recall the pair-of-pants decomposition of $M$ induced by $\psi$.
From now on, we rescale $\psi$ so that critical values are positive integers and
boundaries of finite cylinders are level set of positive integer value of $\psi$.
(We still use the notation $M^{\leq n}$, to denote the same Liouville subdomain as before.)

we can set joining parts to be disjoint and their complement consists of mutually disjoint smaller cylinders.
On these smaller cylinders, $\psi$ is set to be a height function, which is a linear function
with respect to $s$ and the flow of $\psi$ has the form
$$
\phi^1_H((s,t))=(s,t+\frac{\partial H(s,t)}{\partial s})
$$
holds. On the other hand, the Hamiltonian trajectory of a point cannot pass a critical point of $\psi$.
Let $L_1,\,L_2$ be any two gradient trajectories which pass a finite cylinder not contained in an ascending manifold or the descending manifold of the adjacent joining part.
Each gradient trajectories would have the constant $t$ value on the complement of gluing part, say
$t_1$ and $t_2$ respectively.
Then we can set $\kappa$ so that $\phi^1_H(L_1),\,L_2$ transversally intersect at two points in that cylinder:
\begin{enumerate}
\item In the complement of the joining parts, $\psi$ is linear with respect to $s$ and
$\frac {\partial \psi} {\partial t}=0$.
We also set $\kappa(x)=kx+c$ for some $k,\,c\in \RR_{>0}$. In this case, $\frac{\partial H(s,t)}{\partial s}$ is constant.
If $L_1\neq L_2$, set $k=1/\frac{\partial \psi}{\partial s}$ and otherwise set
$k=1/\frac{\partial \psi}{\partial s}+\epsilon$ for sufficiently small to
make Hamiltonian flow $\phi^1_H(L_1)$ pass $L_2$ once.
\item We use representative of $L_1$ which is sufficiently close to critical point of $\psi$ in each critical values of $\psi$ and may assume that Hamiltonian flow $\phi^1_H(L_1)$ in the boundary of finite cylinder, which is a level set of
$\psi$ of a critical value, does not pass $L_2$.
\item By our settings above, $\phi^1_H(L_1)$ and $L_2$ has an intersection point in each collar
neighbourhood of boundary of finite cylinder contained in joining parts.
\end{enumerate}

This leads to the following proposition, since each building block except for an infinite
cylinder contains a joining part and transversal intersection between $\phi^1_H(L_1)$ and $L_2$
in an infinite cylinder is trivial.

\begin{prop}\label{prop:intersection_in_finite_cylinder}
We can find a $\kappa: \R \to \R$ so that a Hamiltonian $H=\kappa \circ \psi$
satisfies the following:
Let $L_1,\,L_2$ be gradient trajectories in $B_I$ which does not intersect an ascending or descending manifold of critical point of $\psi$ in $B_I$.
Then $\phi^t_H(L_1)$ and $L_2$ has a transversal intersection point in $B_I$ for any $t\geq 1$.
\end{prop}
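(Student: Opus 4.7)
The plan is to construct $\kappa$ piecewise, with a linear piece over the $\psi$-range covering the interior of each finite cylinder in $B_I$ and with slope changes confined to collar neighborhoods of the joining parts, and then to exploit those slope changes to produce the required transversal intersection. Recall that along its critical levels $B_I$ decomposes into a neighborhood of the saddle (the joining part) and a collection of finite cylinders, on each of which we have coordinates $(s,t)\in[a,b]\times \R/\Z$ with $\omega = ds\wedge dt$, $\psi = \lambda s+c$ for some $\lambda>0$, and $\partial_t\psi = 0$. A direct computation then gives $X_H = \kappa'(\psi)\,\lambda\,\partial_t$ (up to sign), so $\phi_H^\tau(s,t) = (s,\,t+\tau\lambda\kappa'(\lambda s + c))$ on each such cylinder.

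I would set $\kappa$ to be linear of slope $k = 1/\lambda$ on the $\psi$-range covering the interior of each finite cylinder when $L_1\neq L_2$, and of slope $k = 1/\lambda+\epsilon$ for a small $\epsilon>0$ when $L_1 = L_2$, so that $\phi_H^1$ acts on the interior as a full rotation (respectively, as a rotation by $1+\epsilon\lambda > 1$). Across each collar neighborhood of a joining part the slope $\kappa'$ transitions smoothly to different values so as to match across critical levels, giving a globally defined $\kappa\colon\R\to\R$. Since $L_1,L_2$ avoid every ascending and descending manifold of a saddle critical point of $\psi$ in $B_I$, each restricts on every finite cylinder to a constant-$t$ section $\{t = t_i\}$; the translate $\phi_H^\tau(L_1)$ is then the curve $\{(s,\,t_1+\tau\lambda\kappa'(\lambda s + c))\}$, which inside each collar sweeps continuously through a range of $t$-values. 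By the intermediate value theorem applied to $s\mapsto \tau\lambda\kappa'(\lambda s + c)\bmod 1$ for $\tau\geq 1$, there is an interior $s^*$ in the collar at which this reduction equals $t_2-t_1\bmod 1$, so $\phi_H^\tau(L_1)$ and $L_2$ meet at $(s^*,t_2)\in B_I$; transversality is immediate since $\kappa''\neq 0$ in the collar while $L_2$ satisfies $dt|_{L_2} = 0$.

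The main obstacle will be to ensure that the intersection lies strictly in the interior of a collar inside $B_I$, so that the argument does not have to contend with the breakdown of the $(s,t)$-model at the critical-value level sets bounding the cylinder. Following the indication in the paragraph preceding the proposition, I would choose the representative of $L_1$ sufficiently close to the saddle critical point that $\phi_H^1(L_1)$, which is essentially a $t$-translate of $L_1$ by $1$ on the interior, does not cross $L_2$ on the critical-value level set; the intersection is then realized in the interior of the collar, well within $B_I$. For $\tau > 1$ the same IVT argument continues to apply, and additional transversal intersections appear as the total wrap angle grows, so at least one transverse crossing persists throughout $\tau\in[1,\infty)$.
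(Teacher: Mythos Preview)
Your proposal is correct and follows essentially the same approach as the paper: the argument preceding the proposition in the paper sets $\kappa$ linear with slope $k=1/\frac{\partial\psi}{\partial s}$ (or $k=1/\frac{\partial\psi}{\partial s}+\epsilon$ when $L_1=L_2$) on the cylinder interiors, chooses the representative of $L_1$ close to the saddle so that $\phi_H^1(L_1)$ does not cross $L_2$ on the critical level set, and concludes that the intersection occurs in the collar neighborhood of the boundary of the finite cylinder. Your version is slightly more explicit in invoking the intermediate value theorem and in checking transversality via $\kappa''\neq 0$, but the underlying construction is identical.
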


This is also useful when we deal with higher products, since image of $\mu^k$ is determined by
holomorphic disk which is a solution of a Cauchy Riemann boundary equation. Recall that $\rho_n:Fuk(M)\to Fuk(\widehat{M^{\leq n}})$ is the Viterbo restriction functor.

\begin{lemma}
Let $L_0,\,\ldots,\,L_k$ be gradient sectorial Lagrangians.
Then there exists $n\in \ZZ_{>0}$ so that
if $x_i\in \phi^i_H(L_{i-1})\cap \phi^{i-1}_H(L_{i})$ is contained in $M^{\leq m}$ for all $1\leq i \leq k$
for some $m\geq n$ with $\mu^k(x_1,\ldots,x_{k}) \in  M^{\leq m+1}$
$$
\mu^k(x_1,\ldots,x_{k})
=  \mu^k_{Fuk(\widehat{M^{\leq m+1}})}\left(\rho_{m+1}(x_1),\ldots,\rho_{m+1}(x_{k})\right).
$$
\end{lemma}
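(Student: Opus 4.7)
\medskip

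\noindent\textbf{Plan of proof.} The plan is to choose $n$ so large that every $L_i$ is $\psi$-gradient sectorial on $M \setminus M^{\leq n}$ and, moreover, so that the wrapped copies $\phi^j_H(L_i)$ appearing as boundary conditions remain gradient sectorial on $M \setminus M^{\leq n}$. This is possible because the wrapping Hamiltonian has the form $H = \kappa \circ \psi$, so $X_H$ is tangent to every level set of $\psi$ (up to a reparametrization in the fiber direction), and hence $\phi^t_H$ preserves the $Z_\psi$-invariance of the tail of each $L_i$. After this choice, the boundary data for the Cauchy--Riemann equation defining $\mu^k$ consists of gradient sectorial Lagrangian branes in the sense of Definition~\ref{defn:gradient-sectorial-Lagrangian-intro}.

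The next step is a confinement statement: for any solution $u:\dot\Sigma\to M$ of the Floer equation~\eqref{eq:cr} contributing to $\mu^k(x_1,\ldots,x_k)$ with all asymptotics $x_i$ lying in $M^{\leq m}$ and output also in $M^{\leq m+1}$, we have
\begin{equation}
\Image u \subset M^{\leq m+1}.
\end{equation}
The proof is essentially that of Theorem~\ref{thm:pseudoconvex}, adapted to the Hamiltonian-perturbed equation. The function $\psi\circ u$ is subharmonic in the unperturbed case because $(\psi,J)$ is a pseudoconvex pair; when the perturbation term $X_H\otimes \alpha$ is added, the only extra contribution to $\Delta(\psi\circ u)$ involves $X_H[\psi] = \kappa'(\psi)\,|\grad\psi|^2\cdot\alpha$-contributions, which are controlled because $\kappa$ depends only on $\psi$. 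The boundary maximum principle still applies since on each Lagrangian boundary arc both $Z_\psi$ and $\partial_\theta u$ lie in $TL_i$, forcing $d\psi(\partial_\nu u) = 0$ there as in Theorem~\ref{thm:pseudoconvex}. The outcome is that $\psi\circ u$ cannot attain an interior or boundary maximum beyond the level $m+1$, giving the claimed confinement.

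Once confinement is established, it remains to compare the two moduli problems. By construction of the Liouville completion, $\widehat{M^{\leq m+1}}$ contains $M^{\leq m+1}$ as a symplectic subdomain with the same almost complex structure and the same Lagrangian boundary conditions $\rho_{m+1}(\phi^j_H(L_{i-1}))$, which agree with $\phi^j_H(L_{i-1})\cap M^{\leq m+1}$ by the definition of the Viterbo restriction functor $\rho_{m+1}$. Since the confinement statement applies identically to both moduli problems (one computed in $M$, one in $\widehat{M^{\leq m+1}}$), there is a bijection of rigid solutions respecting signs, and the counts defining $\mu^k$ and $\mu^k_{Fuk(\widehat{M^{\leq m+1}})}$ coincide term by term.

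The main obstacle will be the second step: verifying the maximum principle for the Hamiltonian-perturbed equation with $H = \kappa(\psi)$ along gradient sectorial boundaries, and in particular ruling out boundary escape through the wrapped Lagrangians. The unperturbed argument uses only that $Z_\psi$ is tangent to each $L_i$; one must additionally check that $\phi^t_H$ preserves this tangency and that the inhomogeneous term $X_H\otimes\alpha$ does not introduce a positive interior contribution to $d(d(\psi\circ u)\circ j)$, which ultimately reduces to the assumption $\kappa' \geq 0$ and the sub-closedness of $\alpha$ already used in~\eqref{eq:energyineq}.
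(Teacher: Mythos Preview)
Your approach is genuinely different from the paper's, and it has a gap at exactly the point you flag as ``the main obstacle.''

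The paper does \emph{not} run the maximum principle directly on the perturbed equation with wrapped boundary conditions. Instead it exploits the combinatorics of the standard surface: by Proposition~\ref{prop:intersection_in_finite_cylinder}, the specific choice of $\kappa$ forces $L_0$ and $\phi^{k}_H(L_k)$ to acquire a transversal intersection inside every building block $B_I$ with $I\in\cI^{(m+1)}$ that the two Lagrangians enter. If the output $x_0$ were outside $M^{\leq m}$, both $L_0$ and $\phi^k_H(L_k)$ would have to cross into such a $B_I$; since the Hamiltonian flow is trapped in $B_I$ and $\psi$ is monotone along each boundary arc, the first such intersection in $B_I$ must be $x_0$. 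This locates the output in $M^{\leq m+1}$ and confines the disk there. The argument is essentially two-dimensional polygon geometry layered on top of Proposition~\ref{prop:intersection_in_finite_cylinder}; it never needs the wrapped Lagrangians to be gradient sectorial.

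Your route tries to avoid Proposition~\ref{prop:intersection_in_finite_cylinder} and go straight to a perturbed strong maximum principle. Two of the steps you rely on are not correct as stated. First, ``$\phi^t_H$ preserves the $Z_\psi$-invariance of the tail of each $L_i$'' fails unless $\kappa'$ is constant: on a cylindrical piece with $\psi=s$, the image $\phi^\tau_H(\{t=t_0\})=\{t=t_0+\tau\kappa'(s)\}$ is tangent to $\partial_s$ only when $\kappa''\equiv 0$, whereas the paper's $\kappa$ is chosen piecewise (and bent near critical levels) precisely to manufacture the intersections of Proposition~\ref{prop:intersection_in_finite_cylinder}. So the boundary step of Theorem~\ref{thm:pseudoconvex} does not apply to the wrapped boundaries. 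Second, your interior computation writes $X_H[\psi]=\kappa'(\psi)\,|\grad\psi|^2$, but since $X_H=\kappa'(\psi)X_\psi$ and $d\psi(X_\psi)=0$, one has $X_H[\psi]=0$; the quantity you wrote is $dH(\grad\psi)$, not $d\psi(X_H)$. The actual extra terms in $-d(d(\psi\circ u)\circ j)$ coming from the inhomogeneous piece are not controlled by $\kappa'\geq 0$ and sub-closedness of $\alpha$ alone; in the usual wrapped setting one needs the precise relation between $H$ and the Liouville radial coordinate, which is unavailable here because $\psi$ has infinitely many critical levels.

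In short: the paper's proof works because it replaces the analytic boundary maximum principle on wrapped Lagrangians by the building-block intersection mechanism of Proposition~\ref{prop:intersection_in_finite_cylinder}. Your proposal would need either a linear-at-infinity $\kappa$ (incompatible with the paper's choice) or a substantially different confinement argument at the boundary.
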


\begin{proof}
By Proposition \ref{prop:grad-sect_lag_ends}, we may take sufficiently large $n$ and assume that
$L_i\setminus M^{\leq n}$ consists of two gradient trajectories which does not intersect
any ascending or descending manifold of a critical point in $M\setminus  M^{\leq n}$ for every $i$.
Here, we want to find a holomorphic disk whose boundaries are $L_0,\phi^1_H(L_1),\ldots,\phi^{k}_H(L_k)$ and corners are $x_1,\ldots,x_k$ and a point $\mu^k(x_1,\ldots,x_{k})$ in $L_0 \cap \phi^{k}_H(L_k)$.
We denote $\mu^k(x_1,\ldots,x_{k})$ as $x_0$.
Suppose $x_0$ is not contained in $M^{\leq m}$.
Then $x_0$ is contained in a connected component $C$ of $M\setminus M^{\leq m}$.
Since $L_0,\,\phi^{k}_H(L_k)$ are connected and intersect $C$, they should intersect $\partial C$.
Then there is $I\in \cI^{(m+1)}$ so that $B_I$ is attached to $\partial C$.
Hamiltonian flow of a point in each building block $B_I$ cannot escape $B_I$ and
$L_k$ also should intersect $B_I$.
Therefore, by Proposition \ref{prop:intersection_in_finite_cylinder}, $L_0,\,\phi^{k}_H(L_k)$ should have an intersection point in $B_I$.
Since $u$ should be holomorphic on every interior point and $\psi$ value of $L_1$ and $\phi^{k-1}_H(L_k)$ are monotone increasing on $M\setminus M^{\leq m}$, the first intersection point in $B_I$ should be $x_0$ and $u$ should be contained in the interior of $M^{\leq {m+1}}$ by maximal principle.
Therefore, any holomorphic disk is contained in the interior of $M^{\leq {m+1}}$ and we can get the same result under the Viterbo restriction functor $\rho_{m+1}$.
\end{proof}

By this Lemma, we can regard the algebraic structure of Fukaya category of infinite type surface
is locally equivalent to that of its Liouville subdomain.

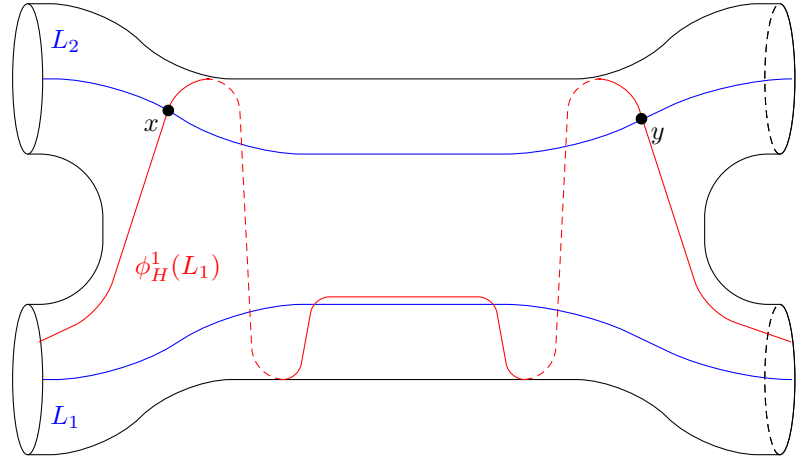
\begin{figure} \centering

\begin{tikzpicture}

\draw[] (-5,1) arc (-90:270:.2 and 1);
\draw[] (-5,-3) arc (-90:270:.2 and 1);

\draw[densely dashed] (5,1) arc (270:90:.2 and 1);
\draw[] (5,1) arc (-90:90:.2 and 1);
\draw[densely dashed] (5,-3) arc (270:90:.2 and 1);
\draw[] (5,-3) arc (-90:90:.2 and 1);

\begin{scope}
\draw[rounded corners=20pt] (-5,3) -- (-4,3)--(-3,2)--(3,2) --(4,3) -- (5,3);
\end{scope}

\begin{scope}
\draw[rounded corners=20pt] (-5,-3) -- (-4,-3)--(-3,-2)--(3,-2) --(4,-3) -- (5,-3);
\end{scope}

\draw[rounded corners=24pt] (-5,1) -- (-4,1) --(-4,-1) -- (-5,-1);
\draw[rounded corners=24pt] (5,1) -- (4,1) --(4,-1) -- (5,-1);

\draw[densely dashed] (5,1) arc (270:90:.2 and 1);
\draw[] (5,1) arc (-90:90:.2 and 1);
\draw[densely dashed] (5,-3) arc (270:90:.2 and 1);
\draw[] (5,-3) arc (-90:90:.2 and 1);

\draw[blue,rounded corners=24pt] (-4.8,-2)--(-3.8,-2) -- (-2.2,-1)--(2.2,-1) --(4.3,-2) -- (5.16,-2);
\draw[blue,rounded corners=24pt] (-4.8,2)--(-3.8,2) -- (-2.2,1)--(2.2,1) --(4.3,2) -- (5.16,2);

\draw[red,rounded corners=12pt] (-4.85,-1.5) -- (-4,-1.1) --(-3,2) -- (-2.6,2);
\draw[red,densely dashed, rounded corners=12pt] (-2.6,2) -- (-2.2,2) --(-2,-2) -- (-1.6,-2);
\draw[red,rounded corners=6pt] (-1.6,-2) -- (-1.4,-2) --(-1.2,-0.9) -- (1.2,-0.9)--(1.4,-2)--(1.6,-2);

\draw[red,densely dashed, rounded corners=12pt] (2.6,2) -- (2.2,2) --(2,-2) -- (1.6,-2);

\draw[red,rounded corners=12pt] (5.15,-1.5) -- (4,-1.1) --(3,2) -- (2.6,2);

\draw[fill=black] (-3.13,1.58) circle (2pt) node[below left]{$x$};
\draw[fill=black] (3.16,1.47) circle (2pt) node[below right]{$y$};
\node[blue] at (-4.5,2.5) {$L_2$};
\node[blue] at (-4.5,-2.5) {$L_1$};
\node[red] at (-3,-0.5) {$\phi^1_H(L_1)$};
\end{tikzpicture}

\caption{Hamiltonian flow of a gradient trajectory near joining parts}
\label{fig:ham_flow_fin_cylinder}
\end{figure}

Now we find a subcomplex of Floer cochain complex between two objects which is generated by intersections in some compact set and quasi-isomorphic to the original complex.

\begin{figure} \centering

\begin{tikzpicture}


\draw[] (-4,3) arc (0:360:0.5 and 0.1);
\draw[] (5,3) arc (0:360:.5 and .1);

\draw[] (-6,-1) arc (-90:270:.2 and 1.5);

\draw[densely dashed] (6,-1) arc (270:90:.2 and 1.5);
\draw[] (6,-1) arc (-90:90:.2 and 1.5);


\draw[densely dashed] (0.5,-2) arc (0:180:0.5 and 0.1);

\draw[] (-0.5,-2) arc (180:360:0.5 and 0.1);

\draw[rounded corners=10pt] (-6,2) -- (-5.5,2)--(-5,2.5)--(-5,3);
\draw[rounded corners=10pt] (-4,3) -- (-4,2.5)--(-3.5,2)--(3.5,2)--(4,2.5)--(4,3);
\draw[rounded corners=10pt] (6,2) -- (5.5,2)--(5,2.5)--(5,3);

\draw[rounded corners=10pt] (-6,-1) -- (-1,-1)--(-0.5,-1.5)--(-0.5,-2);
\draw[rounded corners=10pt] (6,-1) -- (1,-1)--(0.5,-1.5)--(0.5,-2);

\draw[blue] (-5.8,0)--(6.16,0);
\draw[blue] (-5.8,1)--(6.16,1);

\draw[red,rounded corners=12pt] (-5.85,1.5)--(-5.4,1.5) -- (-4.5,-0.1) --(-3.5,1.5)
--(-1,1.5)--(0,-0.1)--(1,1.5)--(3.5,1.5)--(4.5,-0.1)--(5.5,1.5)--(6.1,1.5);

\draw[fill=black] (-5.15,1) circle (2pt);
\draw[fill=black] (-3.8,1) circle (2pt) node[below right]{$x_1$};
\draw[fill=black] (-0.7,1) circle (2pt) node[below left]{$y_1$};

\draw[fill=black] (3.8,1) circle (2pt) node[below left]{$y_2$};
\draw[fill=black] (0.7,1) circle (2pt) node[below right]{$x_2$};

\draw[fill=black] (5.15,1) circle (2pt) node[below right]{$x_{3}$};

\node[blue] at (-1.5,.5) {$L_2$};
\node[blue] at (-1.5,-.5) {$L_1$};
\node[red] at (0,1.5) {$\phi^1_H(L_1)$};
\end{tikzpicture}

\caption{Near limit point of $B(M)$}
\label{fig:near_twoside_lim_pt}
\end{figure}
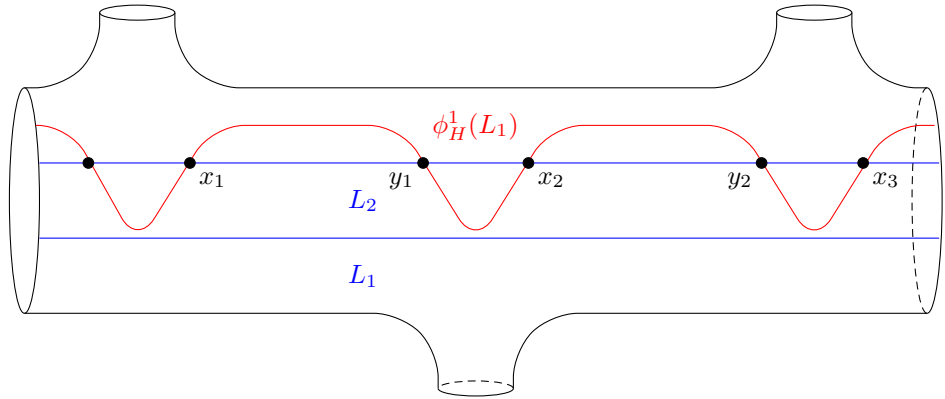

\begin{figure} \centering

\begin{tikzpicture}

\draw[] (-6,-2) arc (-90:270:.2 and 2);

\draw[densely dashed] (6,-2) arc (270:90:.2 and 2);
\draw[] (6,-2) arc (-90:90:.2 and 2);

\draw[rounded corners=10pt] (-6,2) --(6,2);
\draw[rounded corners=10pt] (-6,-2) -- (6,-2);

\draw[blue] (-5.8,-0.7)--(6.16,-0.7);
\draw[blue] (-5.8,-1.5)--(6.16,-1.5);
\draw[blue] (-5.8,1)--(6.16,1);

\draw[rounded corners=12pt] (-4.5,0)--(-4,-0.3)--(-3,-0.4)--(-2,-0.3)--(-1.5,0);

\draw[rounded corners=10pt] (-4.3,-0.1)--(-4,0.2)--(-3,0.3)--(-2,0.2)--(-1.7,-0.1);

\draw[rounded corners=12pt] (4.5,0)--(4,-0.3)--(3,-0.4)--(2,-0.3)--(1.5,0);
\draw[rounded corners=10pt] (4.3,-0.1)--(4,0.2)--(3,0.3)--(2,0.2)--(1.7,-0.1);

\draw[red,rounded corners=12pt] (-5.85,1.5)--(-5.4,1.5) -- (-4.5,-1.6) --(-4,-0.5)
--(-2,-.5)--(-1.5,-1.6)--(-0.5,1.5)--(0.5,1.5)--(1.5,-1.6)--(2,-0.5)--(4,-0.5)--(4.5,-1.6)--(5.4,1.5)--(6.15,1.5);

\draw[fill=black] (-5.25,1) circle (2pt) node[below left]{$y_{0}$};
\draw[fill=black] (-.65,1) circle (2pt) node[below right]{$x_1$};

\draw[fill=black] (0.65,1) circle (2pt) node[below left]{$y_1$};

\draw[fill=black] (5.25,1) circle (2pt) node[below right]{$x_2$};

\draw[fill=black] (4.75,-0.7) circle (2pt);
\draw[fill=black] (4.05,-0.7) circle (2pt);
\draw[fill=black] (1.95,-0.7) circle (2pt);
\draw[fill=black] (1.2,-0.7) circle (2pt);
\draw[fill=black] (-1.95,-0.7) circle (2pt);
\draw[fill=black] (-1.2,-0.7) circle (2pt);
\draw[fill=black] (-4.05,-0.7) circle (2pt);
\draw[fill=black] (-4.75,-0.7) circle (2pt);

\node[blue] at (-1.5,.7) {$L_2$};
\node[blue] at (0,-.4) {$L_2'$};
\node[blue] at (0,-1.2) {$L_1$};
\node[red] at (1.2,1.5) {$\phi^1_H(L_1)$};
\end{tikzpicture}

\caption{Near nonplanar isolated point of $B(M)$}
\label{fig:near_isolated_nonplanar_end}
\end{figure}
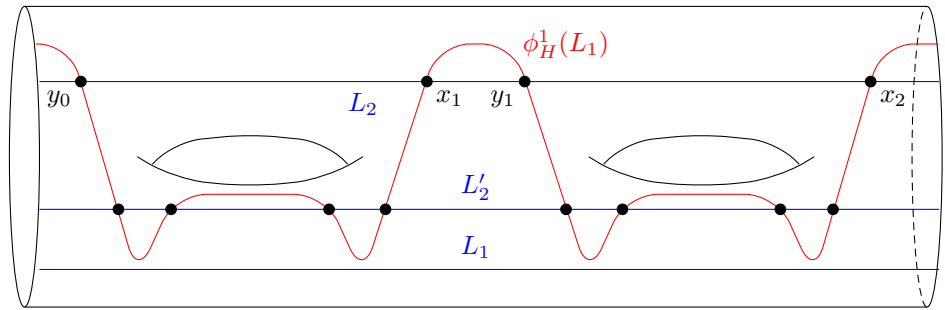

\begin{prop}\label{prop:mor_infinite_surfaces}
Let $L_1,\,L_2$ be gradient sectorial Lagrangians.
Then there exists $n\in \ZZ_{>0}$ such that
\begin{enumerate}
\item $Mor(L_1,L_2)\cong Mor_{Fuk(\widehat{M^{\leq n}})}(\rho_n(L_1),\rho_n(L_2))$, or
\item $Mor(L_1,L_2)$ is quasi-isomorphic to the subcomplex generated by intersections in $M^{\leq n}$.
\end{enumerate}
\end{prop}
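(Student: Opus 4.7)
The plan is to choose $n$ large enough that everything outside $M^{\leq n}$ is under the control of the gradient‑sectoriality of $L_1, L_2$ and of the model Hamiltonian $H=\kappa\circ\psi$ from Proposition \ref{prop:intersection_in_finite_cylinder}, and then to identify an acyclic subcomplex generated by the ``outside'' generators. Concretely, I would first invoke Proposition \ref{prop:grad-sect_lag_ends} to pick $n_0$ so that for all $n \geq n_0$ the pieces $L_i\cap M^{\geq n}$ consist entirely of $Z_\psi$‑trajectories avoiding $\Crit(\psi)$, and so that the wrapping Hamiltonian $H = \kappa\circ\psi$ is linear in $\psi$ on $M\setminus M^{\leq n}$ in the sense of Section \ref{sec:calculation}. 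Enumerating the building blocks of $M\setminus M^{\leq n}$ as $\{B_I\}_{I\in\cS,\,I\in\cI^{(\geq n)}}$, the generators of $CF(L_1,\phi_H^1(L_2))$ lying outside $M^{\leq n}$ partition according to which building block contains them, since $X_H$ is tangent to level sets of $\psi$ and the Hamiltonian flow preserves each connected component of such a level set.

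Next I would carry out a block‑by‑block analysis. If $B_I$ is an infinite cylinder, the intersections inside it are in bijection with the intersections in the corresponding cylindrical end of $\widehat{M^{\leq n}}$, and Viterbo restriction $\rho_n$ induces a cochain‑level isomorphism on this sub‑collection; if \emph{every} building block of $M\setminus M^{\leq n}$ is of this type (which is exactly the case when both $L_1,L_2$ have only isolated cylindrical ends), alternative (1) follows. In general, each non‑cylindrical block (a pair of pants with genus, a cylinder with genus, or a block abutting a limit point of $B(M)$) contributes generators in acyclic pairs $(x_j^I, y_j^I)$ as illustrated in Figure \ref{fig:ham_flow_near_gluing_parts} and Figures \ref{fig:near_twoside_lim_pt}--\ref{fig:near_isolated_nonplanar_end}: by the description of the Hamiltonian flow on a critical level set of $\psi$, the intersection points in $B_I$ outside the critical level appear in pairs contained in the same sub‑cylinder, yielding a degree‑$1$ Floer bigon connecting $y_j^I$ to $x_j^I$ and no others inside that sub‑cylinder.

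I would then promote this local acyclicity to a genuine subcomplex decomposition. Let $A\subset CF(L_1,L_2)$ be the span of all outside generators $\{x_j^I, y_j^I\}$. To show $\delta A\subset A$ I would apply the maximum principle for the pseudoconvex pair $(\psi,J)$ (Theorem \ref{thm:pseudoconvex}), adapted to the Hamiltonian‑perturbed Cauchy–Riemann equation with $H=\kappa(\psi)$: since $d(H\cdot \alpha)$ and $-d(d\psi\circ J)$ have compatible signs on $M\setminus M^{\leq n}$, the function $\psi\circ u$ is subharmonic and cannot attain a boundary maximum along $L_1$ or $L_2$ (the boundary obstruction is killed, as in the proof of Theorem \ref{thm:pseudoconvex}, by the tangency of $Z_\psi$ to the gradient‑sectorial $L_i$). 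Consequently any Floer strip asymptotic at its positive end to an outside generator in $B_I$ must remain in the connected component of $\psi^{-1}((\psi(x_j^I)-\epsilon,\infty))$ containing $x_j^I$, which is precisely one of the sub‑cylinders described above; combining with the level‑set invariance of $X_H$, such a strip must also converge to an outside generator in $B_I$. Hence $A$ is a subcomplex, and the local bigon analysis shows $H^\bullet(A)=0$, while the quotient $CF(L_1,L_2)/A$ is by construction generated by the intersections inside $M^{\leq n}$, giving alternative (2).

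The main obstacle is the last step, namely rigorously justifying that $\delta A\subset A$ and that the only bigon contributions to $\delta y_j^I$ are the expected ones. This requires care because a priori Floer strips can travel across several building blocks, and one needs the pseudoconvex maximum principle to be effective even in the presence of the Hamiltonian perturbation and of the non‑compactness of $L_1,L_2$. I would handle this by choosing $\kappa$ with $\kappa''\geq 0$ so that the perturbation term contributes nonnegatively to the subharmonic inequality for $\psi\circ u$, and then combining the action filtration with the sectoriality tangency condition $d\psi(\partial_\nu u)=|Z_\psi|^{-2}\,d\alpha(Z_\psi,\partial_\tau u)=0$ along the boundary to rule out escape across $\partial M^{\leq n}$ for strips whose asymptotes both lie in a single non‑cylindrical block. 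Once this confinement is established, the quotient complex computes $Mor(L_1,L_2)$ up to quasi‑isomorphism, yielding the proposition.
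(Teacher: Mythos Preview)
There is a genuine gap: your acyclic-pair picture is correct for only \emph{one} of the relevant end configurations, and fails precisely in the case that carries the content of the proposition. When the shared ideal boundary point $p$ is a limit point of $B(M)$ (or an isolated nonplanar end with the two trajectories in the \emph{same} Liouville isotopy class, cf.\ Lemma \ref{lem:eq_class_of_end_of_lagrangians}), the generators in successive finite cylinders pair up as $(x_i,y_i)_{i\geq 1}$ but the differential is
\[
\delta x_i = 0,\qquad \delta y_i = x_i + x_{i+1},
\]
not $\delta y_i = x_i$. There is a second bigon from $y_i$ that crosses the joining region into the adjacent cylinder and lands on $x_{i+1}$; see Figure \ref{fig:near_twoside_lim_pt}. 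Your maximum-principle argument cannot exclude this bigon: the strip from $y_i$ to $x_{i+1}$ sits inside the sublevel $\{\psi\leq\psi(x_{i+1})\}$, and both its asymptotes and its boundary on the gradient-sectorial $L_j$ are compatible with the subharmonic inequality for $\psi\circ u$. The maximum principle bounds strips from \emph{above} in $\psi$, but it does not prevent a strip issued from an outside generator $y_k$ from hitting the inside generator $x_k$, nor does it force the outside telescope to be acyclic.

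As a result your span $A$ of outside generators is, depending on where the cut falls, either not $\delta$-closed (cut just after $x_k$: then $\delta y_k = x_k + x_{k+1}$ lands partly inside) or not acyclic (cut just before $x_k$: the telescope $\{x_k,y_k,x_{k+1},\ldots\}$ has one-dimensional cohomology, the common class of the $x_i$). Either way the quotient $CF/A$ does not compute $\mathrm{Mor}(L_1,L_2)$, and in any event the statement asks for a \emph{subcomplex}, not a quotient. The paper's proof proceeds differently: it places the cut just after $x_k$, observes that the span of $x_1,y_1,\ldots,y_{k-1},x_k$ \emph{is} a subcomplex (since $\delta y_{k-1}=x_{k-1}+x_k$ stays inside), and checks by hand that its inclusion is a quasi-isomorphism because in both the truncated and the full telescope all the $x_i$ represent a single cohomology class. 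Your argument \emph{does} apply verbatim in the paper's Case (3) (different Liouville classes at an isolated nonplanar end), where one genuinely has $\delta y_i=x_i$; but Cases (1)--(2) require the telescoping analysis above.
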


\begin{proof}
Let gradient sectorial Lagrangians $L_1,L_2$ of $M$ be given.

Suppose $L_1,L_2$ are open and there exist at least one $p\in B(M)$ such that $p$ is an ideal boundary of both $L_1$ and $L_2$ and one of the following hold:
\begin{enumerate}
\item $p$ is a limit point in $B(M)$.
\item $p$ is an isolated point in $B(M)$ and $p\in B'(M)$, gradient trajectories corresponding to $L_1$ and $L_2$ are in the same Liouville isomorphism class.
\item $p$ is an isolated point in $B(M)$ and $p\in B'(M)$, gradient trajectories corresponding to $L_1$ and $L_2$ are not in the same Liouville isomorphism class.
\item None of the above holds.
\end{enumerate}

Generators of Floer cochain complex between two gradient trajectories would be the following.
For case (1), see Figure \ref{fig:near_twoside_lim_pt}.
In each finite cylinder, intersection points in $\phi^1_H(L_1)\cap L_2$ comes in pair so that
we may label them as $x_i,y_i$, $i\in \ZZ_{>0}$.
Then we can check that $\delta x_i=0$ and $\delta y_i=x_i+x_{i+1}$ for all $i$, and
$$
\langle x_i \rangle_{i\in \ZZ_{>0}}/\langle x_i+x_{i+1} \rangle_{i\in \ZZ_{>0}}=\langle x_1,x_1+x_2,\ldots,x_{k-1}+x_k\rangle
$$
holds for any $k$.
We can find a $n$ so that $M^{\leq n}$ contains $x_1,y_1,\ldots,y_{k-1},x_k$ for some $k$ and intersection points contained in $M^{\leq n}$ gives the same cohomology as intersection points
in whole surface.

For Case (2), see $L_1$ and $L_2'$ in Figure \ref{fig:near_isolated_nonplanar_end}.
It is same as Case (1).

For Case (3), see $L_1$ and $L_2$ in Figure \ref{fig:near_isolated_nonplanar_end}.
In each finite cylinder where $L_1$ and $L_2$ both intersect, intersection points in
$\phi^1_H(L_1)\cap L_2$ comes in pair so that we may label them as $x_i,y_i$, $i\in \ZZ_{>0}$.
Then we can check that $\delta x_i=0$ and $\delta y_i=x_i$ for all $i$.
Then boundary is same as cycle and these $x_i$ and $y_i$ do not contribute to cohomology.
Therefore, we can find a $n$ so that $M^{\leq n}$ contains $x_1,y_1,\ldots,y_{k-1},x_k$ for some $k$ and intersection points contained in $M^{\leq n}$ gives the same cohomology as intersection points
in whole surface.

Case (4) holds when $L_1$ and $L_2$ do not have the same ideal boundary point so that
every intersection point in $\phi^1_H(L_1)\cap L_2$ is contained in some compact set or
they meet at cylindrical ideal boundary points. In both cases, we have
$$
Mor(L_1,L_2)=Mor_{Fuk(\widehat{M^{\leq n}})}(\rho_n(L_1),\rho_n(L_2)).
$$
\end{proof}

\subsection{Restriction functor diagram and inverse limit}

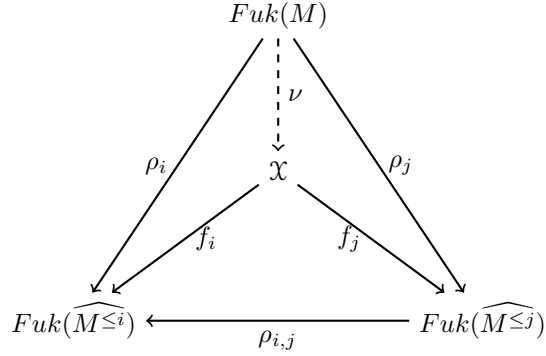
\begin{figure}
\begin{tikzpicture}
    [
        node distance=1.5cm, thick,
    ]
     \node (Fx)                                                  {$Fuk(\widehat{M^{\leq i}})$};
     \node (Fy)[xshift=2cm,right=of Fx]                          {$Fuk(\widehat{M^{\leq j}})$};
     \node (L) [yshift=2cm] at (barycentric cs:Fx=0.1,Fy=0.1)    {$\mathcal{X}$};
     \node (N) [above=of L]                                      {$Fuk(M)$};

    \draw[->,dashed] (N) -- (L)  node [right,midway] {$\nu$};
    \draw[->] (L)  -- (Fx) node [right,midway]  {$f_i$};
    \draw[->] (L)  -- (Fy) node [left,midway] {$f_j$};
    \draw[->] (Fy) -- (Fx) node [below,midway] {$\rho_{i,j}$};
    \draw[->] (N) -- node [left,midway] {$\rho_i$} (Fx) ;
    \draw[->] (N) -- node [right,midway] {$\rho_j$} (Fy) ;
\end{tikzpicture}
\caption{Restriction functor diagram}
\label{fig:restriction_diagram}
\end{figure}

Let $M$ be a standard surface.
Throughout this section, we may assume that every gradient sectorial Lagrangian submanifold
$L\in Ob(Fuk(M))$
is invariant under gradient flow of $\psi$ near $\partial M^{\leq n}$ for every $n\in \ZZ_{>0}$.
Then we can define Viterbo restriction functor $\rho_n:Fuk(M)\to Fuk(\widehat{M^{\leq n}})$ for
every $n$.
For $0<i\leq j$, $M^{\leq i}$ is a Liouville subdomain of $M^{\leq j}$ and we can also define
the Viterbo restriction functor $\rho_{i,j}:Fuk(\widehat{M^{\leq j}})\to Fuk(\widehat{M^{\leq i}})$.
Then the following hold:

\begin{enumerate}
\item $\rho_{i,i}$ is an identity for any $i\geq 1$.
\item $\rho_{i,j}\circ \rho_{j,k}=\rho_{i,k}$ for any $i\leq j\leq k$.
\item $\rho_{i,j}\circ \rho_{j}=\rho_{i}$ for any $i\leq j$.
\end{enumerate}

By items (1) and (2), the pair $(Fuk(\widehat{M}^i)_{i\geq 1},(\rho_{i,j})_{1\leq i\leq j})$ is an inverse diagram and has the inverse limit $\mathcal{X}$ with projections $f_n:\mathcal{X}\to Fuk(\widehat{M}^n)$.
By item (3), we have an $A_\infty$-functor $\nu:Fuk(M) \to \mathcal{X}$ and the diagram in
Figure \ref{fig:restriction_diagram} intertwines.

\begin{theorem}
The $A_\infty$-functor $\nu:Fuk(M) \to \mathcal{X}$ is a quasi-equivalence if and only if $M$ is of finite type.
\end{theorem}
\begin{proof}
If $M$ is of finite type, there exist some $n\in \ZZ_{>0}$ so that $\rho_n:M \to M^{\leq n}$ is
a quasi-equivalence and we have a finite inverse diagram
$$
\left(Fuk(\widehat{M}^i)_{1\leq i\leq n},(\rho_{i,j})_{1\leq i\leq j\leq n}\right).
$$
Inverse limit of this diagram is $Fuk(\widehat{M^{\leq n}})$ and we are done.

For the infinite type case, we will find an object $X$ of $\mathcal{X}$ so that there is no
element $L$ of $Fuk(M)$ with $\nu(L)=X$.
Suppose $M$ is of infinite type. Then $M$ has at least one noncylindrical end $p$ and we can
pick any $q\in B(M)$ so that there is an open gradient-sectorial Lagrangian $L_{p,q}$.
By Proposition \ref{prop:grad-sect_lag_ends}, there exists $n\in \ZZ_{>0}$ such that
connected components of $L_{p,q}\setminus M^{\leq n}$ are gradient trajectories of $\psi$ which does not
intersect any ascending or descending manifold of a critical point in $M\setminus M^{\leq n}$.
We have a unique maximal chain $I_1\supset I_2 \supset \ldots$ so that
$I_1\in \cI^{(n)}$ and $\bigcap^\infty_{i=1}I_i=\{p\}$.
Each $B_{I_i}$ has the boundary component where $B_{I_i}$ is attached to.
Also, to define Viterbo restriction functor, we assumed that every boundary component of
$B_{I_i}$ should have a neighbourhood where every gradient sectorial Lagrangian submanifold
is invariant under the gradient flow of $\psi$.
We can find a simple closed curve $S_i$ contained in $B_{I_i}$ which satisfies the following:
\begin{enumerate}
\item $S_i$ is a connected component of regular level set of $\psi$.
\item $S_i$ is homeomorphic to the boundary component of $B_{i_i}$ where it is attached to.
\item $S_i$ is disjoint from the neighbourhood of boundary components of $B_{I_i}$ where every gradient sectorial Lagrangian submanifold is invariant under the gradient flow of $\psi$.
\end{enumerate}

$L_{p,q}$ should intersect every $B_{I_i}$ and $L_{p,q}\cap B_{I_i}$ is a gradient trajectory which is disjoint from any critical point in $B_I$, so $L$ should intersect $S_i$ by item (2).
Also, by item (1), $S_i$ transversally intersect $L_{p,q}$.
Let $L_1$ be an open Lagrangian which is obtained by applying Dehn twist about
$S_1$ to $L_{p,q}$.
For $i>1$, let $L_i$ be an open Lagrangian which is obtained by applying Dehn
twist about $S_i$ to $L_{i-1}$.
Then each $L_i$ is a gradient-sectorial open Lagrangian and by item (3) we can apply $\rho_{n+i}$ on each $L_i$ and get
\begin{equation}
\rho_{n+i,n+j}(\rho_{n+j}(L_j))=\rho_{n+i}(L_i)
\end{equation}
for all $i\leq j$. Therefore, there exists $X\in \mathcal{X}$ so that
$f_{n+i}(X)=\rho_{n+i}(L_i)$ for all $i$.

Suppose there exists an object $L$ in $Fuk(M)$ so that $\nu(L)=X$.
Then 
\begin{equation}
\rho_{n+i}(L)=f_{n+i}\circ \nu(L)= f_{n+i}(X)=\rho_{n+i}(L_i)
\end{equation}
for all $i$.
Note that $L$ is a gradient-sectorial Lagrangian and there exists $m\in \ZZ_{>0}$ so that 
$L$ is invariant under $\psi$-gradient flow on $M\setminus M^{\leq m}$.
However, $\rho_{n+i}(L)=\rho_{n+i}(L_i)$ for all $i$ and item (1) above implies $L$ is not invariant under $\psi$-gradient flow on a neighbourhood of each $S_i$.
$S_i$ is contained in $B_{I_i}$ and disjoint from $M^{\leq{n+i-1}}$. Therefore, for sufficiently large $i$, $S_i$ is contained in $M\setminus M^{\leq m}$ and it contradicts to gradient-sectoriality of $L$.
This argument holds for any finite connected sum of objects of $Fuk(M)$ 
since a finite connected sum of objects of $Fuk(M)$ also should be 
invariant under $\psi$-gradient flow out of some compact set and
we can always find $S_i$ disjoint from the given compact set.

This finishes the proof.
\end{proof}

Roughly speaking, $Fuk(M)$ is an $A_\infty$-category whose objects consist of
the elements which become stabilized
in sufficiently large Liouville subdomain.

\section{Future directions}

In this section, we describe some natural questions arising from our construction.

The first obvious problem is to understand the action of big mapping class group on
our Fukaya category $Fuk(M,\CT)$ and to find a new construction of
quasimorphisms on $\Symp_{QC}(M,\omega_\CT)$ as mentioned in the introduction of the present paper.
By construction, it is clear that $Fuk(M,\CT)$ depends on the structure of ideal boundary pair $(B(M),B'(M))$.
In particular, it will be another interesting problem to describe the quotients category
$$
Fuk(M,\CT)/ Fuk^{cpt}(M,\CT)
$$
in terms of a combinatorial category of the ideal boundary 
pair $(B(M), B'(M))$
such as the cluster category illustrated by
the case of plumbings of cotangent bundles \cite{bae-jeong-kim}.
This is a subject of future study.

Although we only cover the Fukaya category of Riemann surface without boundary, classification of Riemann surface has been extended to the case with boundary by Prishlyak and Mischenko in \cite{prish-misch}. Using this, we believe that our construction can be generalized to 
the case of  Liouville sectors as  Ganatra, Pardon and Shende
did in \cite{gps}. We also refer readers to \cite{auroux-smith}
for some description of Fukaya category of infinite type surface
in terms of  the filtration of finite-type submanifolds thereof.

Another interesting direction of research would be to relate our 
construction to surfaces of
infinite type that naturally arise in the mirror symmetry of elliptic curves and
the modular forms via the study of Fukaya category of the divisor complements.
(See \cite{nagano-ueda}, \cite{hashimoto-ueda} for example.)

Obviously we would like to extend our study to higher dimensions. In higher dimensions,
there is no classification results like the case of surfaces, but we can construct a
Fukaya category for any Weinstein manifolds of infinite type in the same way as
in the surface case adopting the collection of sectorial Lagrangians as its objects.
One might try to exploit the higher dimensional version of
pair of pants decomposition by G. Mikhalkin in \cite{Mikhalkin}
in the description of morphism spaces of the Fukaya category.

\bibliographystyle{amsalpha}

\bibliography{biblio-oh-choi}

\def\cprime{$'$}
\providecommand{\bysame}{\leavevmode\hbox to3em{\hrulefill}\thinspace}
\providecommand{\MR}{\relax\ifhmode\unskip\space\fi MR }
\providecommand{\MRhref}[2]{%
  \href{http://www.ams.org/mathscinet-getitem?mr=#1}{#2}
}
\providecommand{\href}[2]{#2}
\begin{thebibliography}{GPS20}

\bibitem[AB22]{azam-blanchet}
Haniya Azam and Christian Blanchet, \emph{Topological {F}ukaya categories of
  surfaces}, J. Pure Appl. Algebra \textbf{226} (2022), no.~6, Paper No.
  106941, 30 pp.

\bibitem[AS60]{Ahlfors-Sario}
Lars~Valerian Ahlfors and Leo Sario, \emph{Riemannsurfaces}, Princeton
  university press, 1960.

\bibitem[AS10]{abouzaid-seidel}
Mohammed Abouzaid and Paul Seidel, \emph{An open string analogue of {V}iterbo
  functoriality}, Geom. Topol. \textbf{14} (2010), no.~2, 627--718. \MR{2602848
  (2011g:53190)}

\bibitem[AS21]{auroux-smith}
Denis Auroux and Ivan Smith, \emph{Fukaya categories of surfaces, spherical
  objects and mapping class groups}, Forum of Mathematics, Sigma, vol.~9,
  Cambridge University Press, 2021.

\bibitem[AV20]{Aramayona-Vlamis}
Javier Aramayona and Nicholas~G Vlamis, \emph{Big mapping class groups: an
  overview}, In the Tradition of Thurston (2020), 459--496.

\bibitem[Bava]{bavard:big-mapping}
J.~Bavard, \emph{Gromov-{}hyperbolicity of the ray graph and quasimorphisms on
  a big mapping class group}, preprint, arXiv:1802.02715 (2018).

\bibitem[Bavb]{bavard:collar}
\bysame, \emph{Hyperbolicit\'e du graphe des rayons et quasi-morphisms sur un
  gros groupe modulaire}, Geom. Topol. \textbf{20}, no.~1, 491--535.

\bibitem[BJK]{bae-jeong-kim}
Hanwool Bae, Wonbo Jeong, and Jongmyeong Kim, \emph{Cluster categories from
  {F}ukaya categories}, prepring 2022, arXiv:2209.09442.

\bibitem[Cal]{calegari:blogpost}
D.~Calagari, \emph{Big mapping class groups and dynamics}, Blog-post available
  at
  https://lamington.wordpress.com/2009/06/22/big-mapping-class-groups-and-dynamics/.

\bibitem[CE12]{cieliebak-eliashberg}
Kai Cieliebak and Yakov Eliashberg, \emph{From {S}tein to {W}einstein and
  {B}ack}, Colloquium Publications, vol.~59, American Mathematical Society,
  2012.

\bibitem[GPS20]{gps}
Sheel Ganatra, John Pardon, and Vivek Shende, \emph{Covariantly functorial
  wrapped {F}loer theory on {L}iouville sectors}, Publ. Math. Inst. Hautes
  {\'E}tudes Sci. \textbf{131} (2020), 73–200.

\bibitem[GS79]{Greene-Shiohama}
Robert~E Greene and Katsuhiro Shiohama, \emph{Diffeomorphisms and
  volume-preserving embeddings of noncompact manifolds}, Trans. Amer. Math.
  Soc. \textbf{255} (1979), 403--414.

\bibitem[HU]{hashimoto-ueda}
Kenji Hashimoto and Kazushi Ueda, \emph{The ring of modular forms for the even
  unimodular lattice of signature (2,10)}, Proceedings of the American
  Mathematical Society \textbf{150}, no.~2, 547--558.

\bibitem[Lee16]{heather}
Heather Lee, \emph{Homological mirror symmetry for open riemann surfaces from
  pair-of-pants decompositions}, preprint, arXiv:1608.04473 (2016).

\bibitem[LP]{liu-papado}
L.~Liu and A.~Papadopoulos, \emph{Some metrics on {T}eichm{\"u}ller spaces of
  surfaces of infinite type}, Trans. Amer. Math. Soc. \textbf{363}, no.~8,
  4109--4134.

\bibitem[Mik04]{Mikhalkin}
Grigory Mikhalkin, \emph{Decomposition into pairs-of-pants for complex
  algebraic hypersurfaces}, Topology \textbf{43} (2004), no.~5, 1035--1065.

\bibitem[NU]{nagano-ueda}
Atsuhira Nagano and Kazushi Ueda, \emph{The ring of modular forms for the even
  unimodular lattice of signature (2,18)}, Hiroshima Mathematical Journal
  \textbf{52}, no.~1, 43--51.

\bibitem[Oh21a]{oh:sectorial}
Yong-Geun Oh, \emph{Geometry of {Li}ouville sectors and the maximum principle},
  preprint, arXiv:2111.06112, 2021.

\bibitem[Oh21b]{oh:intrinsic}
\bysame, \emph{Monoid of {L}iouville sectors with corners and its intrinsic
  characterization}, arXiv:2110.11726, 2021.

\bibitem[PM07]{prish-misch}
A.~O. Prishlyak and K.~I. Mischenko, \emph{Classification of noncompact
  surfaces with boundary}, Methods Funct. Anal. Topology \textbf{13} (2007),
  no.~1, 62–66.

\bibitem[PS]{pascaleff-sibilla}
James Pascaleff and Nicol\`o Sibilla, \emph{Fukaya categories of higher-genus
  surfaces and pants decompositions}, preprint, arXiv:2103.03366.

\bibitem[Ree46]{reeb:graph}
G.~Reeb, \emph{Sur les points singuliers d'une forme de {} compl\`etement
  in\'egrable ou d'une fonction num\'erique}, C. R. Acar. Sci. Paris
  \textbf{222} (1946), 847--849.

\bibitem[Ric63]{richards}
Ian Richards, \emph{On the classification of noncompact riemann surfaces},
  Trans. Amer. Math. Soc. \textbf{106} (1963), 259--269.

\bibitem[Sei00]{seidel:graded}
P.~Seidel, \emph{Graded {L}agrangian submanifolds}, Bull. Soc. Math. France
  \textbf{128} (2000), 103--149.

\bibitem[Wei91]{alan:surgery}
A.~Weinstein, \emph{Contact surgery and symplectic handlebodies}, Hokkaido
  Math. J. \textbf{20} (1991), no.~2, 241--251.

\end{thebibliography}

\end{document}